\renewcommand{\dots}{\ifmmode\mathinner{\ldotp\kern-0.2em\ldotp\kern-0.2em\ldotp}\else.\kern-0.13em.\kern-0.13em.\fi}
\newtheorem{theorem}{Theorem}
\newtheorem{lemma}{Lemma}
\newtheorem{proposition}[lemma]{Proposition}
\newtheorem{remark}[lemma]{Remark}
\newtheorem{conjecture}[lemma]{Conjecture}
\definecolor{darkgreen}{rgb}{0.1,0.7,0.1}
\definecolor{darkred}{rgb}{0.7,0.1,0.1}
\newcommand{\E}{\mathbb{E}}
\renewcommand{\P}{\mathbb{P}}
\newcommand{\bH}{\mathbf{H}}
\newcommand{\bbE}{\mathbb{E}}
\newcommand{\bbN}{\mathbb{N}}
\newcommand{\bbP}{\mathbb{P}}
\newcommand{\bbR}{\mathbb{R}}
\newcommand{\bbZ}{\mathbb{Z}}
\newcommand{\cA}{\mathcal{A}}
\newcommand{\cC}{\mathcal{C}}
\newcommand{\cD}{\mathcal{D}}
\newcommand{\cF}{\mathcal{F}}
\newcommand{\cH}{\mathcal{H}}
\newcommand{\cL}{\mathcal{L}}
\newcommand{\cP}{\mathcal{P}}
\newcommand{\cQ}{\mathcal{Q}}
\newcommand{\cT}{\mathcal{T}}
\newcommand{\cW}{\mathcal{W}}
\newcommand{\gb}{\beta}
\newcommand{\gd}{\delta}
\newcommand{\gep}{\varepsilon}       
\newcommand{\gD}{\Delta}
\newcommand{\gO}{\Omega}
\newcommand{\gl}{\lambda}
\newcommand{\ind}{\mathbf{1}}
\DeclareMathOperator{\gap}{\mathrm{gap}}
\newcommand{\lint}{\llbracket}
\newcommand{\rint}{\rrbracket}
\DeclareMathSymbol{\leqslant}{\mathalpha}{AMSa}{"36} 
\DeclareMathSymbol{\geqslant}{\mathalpha}{AMSa}{"3E} 
\DeclareMathSymbol{\eset}{\mathalpha}{AMSb}{"3F}     
\newcommand{\dd}{\,\text{\rm d}}             
\newcommand{\var}{{\rm Var}}
\newcommand{\Tm}{T_{\rm mix}}
\newcommand{\cc}{\complement}
\renewcommand{\tilde}{\widetilde}
\newtheorem{teorema}{Theorem}
\begin{document}

\title{Mixing time and cutoff for the weakly asymmetric\\
simple exclusion process}

\author{Cyril Labb\'e}
\address{Universit\'e Paris-Dauphine, PSL Research University, Ceremade, CNRS, 75775 Paris Cedex 16, France.}
\email{labbe@ceremade.dauphine.fr}
\author{Hubert Lacoin}
\address{IMPA, Estrada Dona Castorina 110, Rio de Janeiro, Brasil.}
\email{lacoin@impa.br}

\pagestyle{fancy}
\fancyhead[LO]{}
\fancyhead[CO]{\sc{C.~Labb\'e and H.~Lacoin}}
\fancyhead[RO]{}
\fancyhead[LE]{}
\fancyhead[CE]{\sc{Cutoff for the WASEP}}
\fancyhead[RE]{}

\date{\small\today}

\begin{abstract}

We consider the simple exclusion process with $k$ particles on a segment of length $N$ performing random walks with transition $p>1/2$ to the right and
$q=1-p$ to the left.
We focus on the case where the asymmetry in the jump rates $b=p-q>0$ vanishes in the limit when $N$ and $k$ tend to infinity, 
and obtain sharp asymptotics for the mixing times of this sequence of Markov 
chains in the two cases where the asymmetry is either much larger or much smaller than $(\log k)/N$. We show that in the former case ($b \gg (\log k)/N$), 
the mixing time corresponds to the time needed to reach macroscopic equilibrium,  
like for the strongly asymmetric (i.e.\ constant $b$) case studied in~\cite{LabLac16}, 
while the latter case ($b \ll (\log k)/N$) macroscopic equilibrium is not sufficient for mixing and one must wait till local fluctuations equilibrate,
similarly to what happens in the symmetric case worked out in \cite{Lac16}. In both cases, convergence to equilibrium is abrupt: we have a cutoff phenomenon for the total-variation distance. 
We present a conjecture for the remaining regime when the asymmetry is of order $(\log k) / N$.

\medskip

\noindent
{\bf MSC 2010 subject classifications}: Primary 60J27; Secondary 37A25, 82C22.\\
 \noindent
{\bf Keywords}: {\it Exclusion process; WASEP; Mixing time; Cutoff.}
\end{abstract}

\maketitle

\setcounter{tocdepth}{1}
\tableofcontents

\newpage 

\section{Introduction}

The simple exclusion process is a model of statistical mechanics that provides a simplified picture for a gas of interacting particles.
Particles move on a lattice, each of them performing a nearest neighbor random walk independently of the others, and interact only 
via the exclusion rule that prevents any two particles from sharing the same site (when a particle tries to jump on a site which is already occupied, this jump is cancelled).

\medskip

In spite of its simplicity, this model displays a very rich behavior and has given rise to a rich literature both in theoretical physics and mathematics, see for instance~\cite{KipLan,Liggett} and references therein.

\medskip

In the present paper, we study relaxation to equilibrium for a particular instance of the simple exclusion process in which the lattice is a segment of length $N$ and particles feel a bias towards the right that vanishes when $N$ tends to infinity.
This setup is often referred to as the Weakly Asymmetric Simple Exclusion Process (WASEP): it interpolates between the symmetric case
(SSEP)
and the one with a positive constant bias (ASEP).

\medskip

While convergence to equilibrium for a particle system can be considered on a macroscopic scale via the evolution of the particle density
or hydrodynamic profile (see e.g. \cite{KipLan} and references therein), 
an alternative and complementary viewpoint (when the system is of finite size) consists in measuring the so-called 
$\gep$-Total Variation Mixing Time \cite{LevPerWil}.
It is defined as the first time 
at which the total variation distance to the stationary state, starting from the ``worst" initial condition, falls below a given threshold $\epsilon$. Compared to the hydrodynamic profile, this provides a much more microscopic information on the particle system.

\medskip

The problem of mixing time of the simple exclusion process on the segment has been extensively studied both in the symmetric \cite{Wil04, Lac16, Lac162} and the asymmetric setup 
\cite{Benjamini, LabLac16} and it has been proved in \cite{Lac16} and \cite{LabLac16} respectively that in both cases,
the worst case total variation distance drops abruptly from its maximal value $1$ to $0$, so that the mixing time does not depend at first order on the choice of the
threshold $\gep$ - a phenomenon known as cutoff and conjectured to hold for a large class of Markov chains as soon as the mixing time is of a larger order 
than the relaxation time (which is defined as the inverse of the spectral gap of the generator).

\medskip

However the patterns of convergence to equilibrium in the symmetric and asymmetric cases are very different. Let us for simplicity focus on the case
with a density of particles $k=\alpha N$, $\alpha\in (0,1)$.
In the symmetric case, the time scale associated with the hydrodynamic profile is $N^2$ and the limit is given by the heat equation \cite{KOV} (which takes an infinite time 
to relax to its equilibrium profile which is flat) and microscopic mixing occurs on a larger time scale $N^2\log N$.

\medskip

In the asymmetric setup the hydrodynamic limit is given by the inviscid Burgers' equation with a shorter time scale $N$ \cite{Reza} 
(see also \cite{LabbeKPZ, LabLac16} for adaptations 
of this result to the segment).
The equilibrium profile for this equation is reached after a finite time and in this case, the mixing time is of order $N$ and corresponds exactly to the 
time at which macroscopic equilibrium is attained.

\medskip

The aim of this paper is to understand better the role of the asymmetry in mixing and how one interpolates between the symmetric and asymmetric regimes. This leads us to consider a model with an asymmetry that vanishes with the scale of observation, usually referred to as 
Weakly Asymmetric Exclusion Process (WASEP). While hydrodynamic limit~\cite{Demasi89, Gartner88, KipLan} and 
fluctuations scaling limits \cite{DG91,BG97,LabbeKPZ} for WASEP are now well understood, much less is known about 
how a weak asymmetry affects the mixing time of the system.

\medskip

A first step in this direction was made in \cite{LevPer16}. Therein the order of magnitude for the mixing time was identified for all possible intensities of vanishing bias, but with different constant for the upper and the lower bounds.
Three regimes where distinguished (in the case where there is a density of particles): 
\begin{itemize}
 \item [(A)] When $b_N\le 1/N$, the mixing time remains of the same order as that of the symetric case $N^2\log N$.
  \item [(B)] When $1/N\le b_N\le (\log N)/N$, the mixing time is of order $(b_N)^{-2}\log N$.
  \item [(C)] When $(\log N)/N\le b_N\le 1$, the mixing time is of order $(b_N)^{-1} N$.
\end{itemize}
The transition occurring around $b_N\approx N^{-1}$ is the one observed for the hydrodynamic limit: 
It corresponds to a crossover regime where the limit is given by a viscous Burger's equation \cite{Demasi89, Gartner88, KipLan} 
which interpolates between the heat and the inviscid Burgers' equations.
The one occurring for $b_N\approx N^{-1}\log N$ is however not observed in the macrospic profile and is specific to mixing times.

\medskip

In the present work, we identify the full asymptotic of the mixing time (with the right constant) when the bias is either negligible compared to, 
or much larger than $\log N / N$ (or $\log k/N$ when the total number of particle is not of order $N$). This implies
cutoff in these two regimes. Our result and its proof provide a better understanding of the effect of asymmetry on microscopic mixing:
When $b_N\gg N^{-1}\log N$, the pattern of relaxation is identical to that of the fully asymmetric case and microscopic equilibrium is
reached exactly when the macroscopic profile hits its equilibrium state.
When $b_N\ll N^{-1}\log N$ the pattern of relaxation resembles that of the symmetric case, the mixing time corresponds to the time needed 
to equilibrate local fluctuations, in particular in the case $(B)$ described above (or more precisely when  $1/N\ll b_N\ll (\log N)/N$)
this time does not correspond to the time needed to reach macroscopic equilibrium.

\medskip

We could not prove such a result in the crossover regime $b_N\approx N^{-1}\log N$:
In this case the time to reach macroscopic equilibrium and that to equilibrate local fluctuations are of the same order and 
the two phenomena are difficult to separate.
In Section \ref{conjectos}  we provide a conjecture for the mixing time in this regime in the case of vanishing density. However the techniques developed here are not sufficient to obtain sharp results in this case.

\section{Model and results}

\subsection{Mixing time for the WASEP}

Given $N\in \mathbb N$, $k\in \lint 1,N-1 \rint$ (we use the notation $\lint a,b\rint=[a,b]\cap \bbZ$) and $p\in(1/2,1]$, the Asymmetric Simple Exclusion Process on  $\lint 1, N  \rint$ with $k$ particles and parameter $p$ is the random process on the state space
$$\gO_{N,k}^0:=\Big\{ \xi\in\{0,1\}^{N} \ : \ \sum_{x=1}^N \xi(x)=k\Big\},$$
associated with the generator
\begin{equation}\label{defgen}
 \cL_{N,k} f(\xi):= \sum_{y=1}^{N-1} \left( q\ind_{\{\xi(y)< \xi(y+1)\}}+p\ind_{\{\xi(y)> \xi(y+1)\}}\right)(f(\xi^y)-f(\xi)),
\end{equation}
where $q=1-p$ and
\begin{equation}\label{flipz}
 \xi^y(x):=\begin{cases} \xi(y+1) \quad &\text{ if } x=y,\\
                         \xi(y)  \quad &\text{ if } x=y+1,\\
                         \xi(x) \quad& \text{ if } x\notin \{y,y+1\}.
           \end{cases}
\end{equation}
In a more intuitive manner we can materialize the positions of $1$ by particles, and say that the particles perform random walks with 
jump rates $p$ to the right and $q=1-p$ to the left: These random walks are independent from one another except that any jump that would put a particle at a location already occupied by another particle is
cancelled.
Having in mind this particle representation, we let for $i\in \lint 1, k\rint$, $\xi_i$ denote the position of the $i$-th leftmost particle
$$\xi_i:= \min\left\{ y\in \lint 1,N \rint \ : \ \sum_{x=1}^y \xi(x)=i \right\}.$$

We let $P^{N,k}_t$ denote
the associated semi-group and $(\eta^\xi(t,\cdot))_{t\ge 0}$ denote the trajectory of the Markov chain starting from initial condition $\xi \in \gO_{N,k}^0$.
This Markov chain is irreducible, and admits a unique invariant (and reversible) probability measure $\pi_{N,k}$ given by
\begin{equation}
\pi_{N,k}(\xi):= \frac{1}{Z_{N,k}}\gl^{-A(\xi)}.
\end{equation}
where $\lambda = p/q$, $Z_{N,k}:= \sum_{\xi\in \gO_{N,k}^0} \gl^{-A(\xi)}$,
and 
\begin{equation}\label{def:A}
A(\xi):= \sum_{i=1}^k (N-k+i-\xi_i)\ge 0
\end{equation}
denotes the minimal number of moves that are necessary to go from $\xi$ to the configuration $\xi^{\min}$ where all the particles are 
on the right $\xi^{\min}(x):=\ind_{[N-k+1,N]}(x)$ (this terminology is justified by the fact that $\xi^{\min}$ is minimal for the order introduced in Section \ref{Sec:Prelim}).

\medskip

Recall that the total-variation distance between two probability measures defined on the same state-space $\gO$ is defined by
$$\|\alpha-\gb\|_{TV}=\sup_{A\subset \gO} \alpha(A)-\gb(A),$$
where the $\sup$ is taken over all measurable sets $A$.

\medskip

The mixing time associated to the threshold $\gep\in (0,1)$ is defined by
\begin{equation}
 \Tm^{N,k}(\gep):=\inf\{ t \ge 0 \ : \ d^{N,k}(t)\le \gep \},
\end{equation}
where $d^{N,k}(t)$ denotes the total-variation distance to equilibrium at time $t$ starting from the worst possible initial condition 
\begin{equation}\label{tvdis}
 d^{N,k}(t):= \max_{\xi\in \gO_{N,k}^0} \| P^{N,k}_t(\xi, \cdot)-\pi_{N,k}\|_{TV}.
\end{equation}

We want to study the asymptotic behavior of the mixing time for this system when both the size of the system and the number of particles tend to infinity.
A natural case to consider is when there is a non-trivial density of particles, that is $k/N\to \alpha \in (0,1)$, but we decide to also treat the boundary cases of vanishing density ($\alpha=0$) and full density ($\alpha=1$).
By symmetry we can restrict to the case when $k=k_N\le N/2$: indeed, permuting the roles played by particles and empty sites boils down to reversing the direction of the asymmetry of the jump rates. Note that we will always impose $k\ge 1$ since when $k=0$ the process is trivial.

The asymptotic behavior of $\Tm^{N,k}(\gep)$ in the case of constant bias ($p>1/2$ is fixed when $N$ goes to infinity) has been obtained in a previous work.

\begin{teorema}[Theorem 2 in \cite{LabLac16}]
 
 We have for every $\gep>0$, every $\alpha \in [0,1]$ and every sequence $k_N$ such that $k_N/N \to \alpha$
 
\begin{equation}
\lim_{N\to \infty}\frac{\Tm^{N,k_N}(\gep)}{N}= \frac{(\sqrt{\alpha}+\sqrt{1-\alpha})^2}{p-q}\;.
\end{equation}

\end{teorema}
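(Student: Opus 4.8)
\emph{Overall strategy.}
The dynamics \eqref{defgen} is monotone for the natural partial order on $\gO_{N,k}^0$ (the one for which the configuration $\xi^{\min}$, all particles on the right, is the minimal element), so there is a coupling of the chains started from all initial conditions under which this order is preserved. Writing $\eta^\xi$ for the chain from $\xi$ and $\xi^{\max}$ for the configuration with all particles on the left, one has $\eta^{\xi^{\min}}(t,\cdot)\le\eta^\xi(t,\cdot)\le\eta^{\xi^{\max}}(t,\cdot)$ for all $t$, hence
\[ \big\|P^{N,k}_t(\xi^{\max},\cdot)-\pi_{N,k}\big\|_{TV}\;\le\; d^{N,k}(t)\;\le\;\P\big(\eta^{\xi^{\max}}(t,\cdot)\ne\eta^{\xi^{\min}}(t,\cdot)\big). \]
It therefore suffices to prove a lower bound on the left-hand side at $t=(1-\delta)T_\star N$ and to show coalescence of the two extremal chains by time $t=(1+\delta)T_\star N$, where $T_\star:=(\sqrt\alpha+\sqrt{1-\alpha})^2/(p-q)$ will be identified as the time at which the inviscid Burgers equation $\partial_s\rho+(p-q)\,\partial_u(\rho(1-\rho))=0$ on $[0,1]$ with zero-flux boundaries and initial datum $\ind_{[0,\alpha]}$ reaches its equilibrium profile $\ind_{[1-\alpha,1]}$; equivalently, $T_\star$ is the hitting time of the limiting ``tent'' by the rescaled height function $h_\xi(x):=\sum_{y\le x}(1-2\xi(y))$, whose evolution is governed by the corresponding Hamilton--Jacobi equation. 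I will use the bookkeeping identity $\sum_{x=0}^N(h_{\xi^{\min}}(x)-h_\xi(x))=2A(\xi)$, together with the fact that under the grand coupling $\sum_x(h_{\eta^{\xi^{\min}}(t)}(x)-h_{\eta^{\xi^{\max}}(t)}(x))$ is a non-negative even integer vanishing exactly when the two chains coincide.

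\emph{Upper bound.}
I would proceed in two steps. First, apply the hydrodynamic limit for the constant-bias exclusion process on the segment (inviscid Burgers; cf.\ \cite{Reza,LabbeKPZ,LabLac16}) at the \emph{fixed} macroscopic time $s=(1+\delta/2)T_\star>T_\star$, at which the limiting profile is already the equilibrium tent: this gives that at real time $t_1:=(1+\delta/2)T_\star N$ the height function $h_{\eta^{\xi^{\max}}(t_1)}$ is within $\gep N$ of $h_{\xi^{\min}}$, uniformly in $x$, with probability tending to $1$, for any fixed $\gep>0$. Second, I would show that from any pair of configurations macroscopically $\gep$-close to the tent the two coupled chains coalesce within an extra $C(\gep)\,N$ units of time, with $C(\gep)\to0$ as $\gep\to0$: the only residual discrepancies are ``lagging particles'' near the macroscopic shock location $u=1-\alpha$, and the drift $p-q>0$ (together with absorption at the ends) removes them ballistically; quantitatively one sandwiches the rescaled height function between the tent and the Burgers solution started $\gep$-away from it, so that the last macroscopic gap closes in time $O(\gep)N$, and adds $O(\log N)$ for the final microscopic relaxation, the spectral gap of the chain being bounded below by $(\sqrt p-\sqrt q)^2$ uniformly in $N,k$. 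Choosing $\gep$ with $C(\gep)<\delta/2$ yields $\Tm^{N,k}(\gep')\le(1+\delta)T_\star N$ for all $\gep'\in(0,1)$.

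\emph{Lower bound.}
Start the chain from $\xi^{\max}$ and take $A$ as distinguishing statistic. The same hydrodynamic input, applied at macroscopic time $s=(1-\delta)T_\star<T_\star$, gives $N^{-2}A(\eta^{\xi^{\max}}(t))\to c(s):=\tfrac12\int_0^1(g_\infty(u)-g(s,u))\dd u$ in probability at $t=(1-\delta)T_\star N$, where $g(s,\cdot)$ is the Hamilton--Jacobi solution and $g_\infty$ the tent; since $g(s,\cdot)\le g_\infty$ with equality only for $s\ge T_\star$, one has $c(s)>0$, so $A(\eta^{\xi^{\max}}(t))\ge\tfrac12 c(s)N^2$ with probability tending to $1$. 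On the other hand $\pi_{N,k}(\xi)\propto\lambda^{-A(\xi)}$ with $\lambda=p/q>1$ fixed, so $\E_{\pi_{N,k}}[A]=O(1)$ and $\pi_{N,k}(A\ge\tfrac12 c(s)N^2)\to0$ by Markov's inequality. Taking $\{A\ge\tfrac12 c(s)N^2\}$ as a test set, $\|P^{N,k}_t(\xi^{\max},\cdot)-\pi_{N,k}\|_{TV}\to1$, hence $\Tm^{N,k}(\gep)\ge(1-\delta)T_\star N$ for every $\gep<1$. Letting $\delta\downarrow0$ in the two bounds proves the theorem; the identity $T_\star=(\sqrt\alpha+\sqrt{1-\alpha})^2/(p-q)$ follows from an explicit (if lengthy) analysis of the Burgers solution on $[0,1]$ --- a rarefaction fan issued from $u=\alpha$, its reflections at the two closed ends, and the two shocks that form there and meet at the final kink $u=1-\alpha$.

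\emph{Main difficulty.}
The crux is the second step of the upper bound: turning the soft, qualitative hydrodynamic statement (``within $\gep N$ of the tent at a fixed macroscopic time, for fixed $\gep$'') into genuine coalescence within an \emph{additional} $o(N)$ units of time. This is exactly what distinguishes the constant-bias regime from the symmetric one, where the analogous last stage costs a further factor $\log N$; a bound through $\E[\text{discrepancy area}]$ alone does not suffice, since that expectation need not be $o(N)$, and one must instead control the spatial support of the residual lagging particles and use that the bias evacuates them ballistically rather than diffusively. Making the Burgers-on-the-segment analysis precise enough to pin down the constant $T_\star$ (not merely its order) is the other, more routine, technical point.
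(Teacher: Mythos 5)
This statement is cited as a \emph{teorema} from \cite{LabLac16} and is not reproved in the present paper; the closest in-paper analogue is Theorem~\ref{Th:largebias}, whose proof (Sections~\ref{Sec:LBLB}--\ref{Sec:UBLB}) refines the strategy of~\cite{LabLac16} and is the right benchmark. Your overall outline — hydrodynamics for both bounds, then a coalescence argument for the upper bound — is the right one, and your lower bound via the observable $A$ together with Markov's inequality against $\pi_{N,k}$ (for which $\E_{\pi_{N,k}}[A]=O(1)$ since $\lambda>1$ is fixed) is a valid, arguably cleaner, alternative to the paper's choice of using the leftmost-particle position (Lemma~\ref{lem:lbeq}).

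The gap is in the second step of the upper bound, and it is precisely where you flag the ``main difficulty'' but then misidentify the missing ingredient. The hydrodynamic limit (Proposition~\ref{prop:lidro}) gives a \emph{uniform $O(\gep N)$ bound on the height function}, but it gives \emph{no control at all on the position of the leftmost particle $\ell_N$ or of the rightmost hole $r_N$}: a single straggler at site $1$ perturbs the height function by only $O(1)$, invisibly at the macroscopic scale, yet it makes the discrepancy region span all of $\lint 1,N\rint$. Your ``lagging particles are near the shock'' claim is exactly the assertion that needs a proof, and it is not a consequence of the soft hydrodynamic statement. In the paper this is handled by Proposition~\ref{lerimo} (convergence of $L_N/N$ and $R_N/N$ to $\ell_\alpha,r_\alpha$), proved in Sections~\ref{sec:auxi}--\ref{Subsec:alphaNon0} by comparison with an auxiliary exclusion process on $\bbZ$ to which an extra, slower, right-most particle is appended so that the particle spacings are exactly stationary (Lemma~\ref{station}); that law-of-large-numbers-type input is what makes the ``ballistic evacuation'' picture rigorous.

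Relatedly, your remark that ``a bound through $\E[\text{discrepancy area}]$ alone does not suffice'' points in the wrong direction: for the \emph{unweighted} area this is true, but the key device in the paper is the $\lambda$-weighted area $f^{(0)}_{N,k}(h^\wedge_t)-f^{(0)}_{N,k}(h^\vee_t)$, which contracts in expectation at rate $\varrho=(\sqrt p-\sqrt q)^2$ (inequality~\eqref{eq:contract0}) and has minimal increment $\gl^{(k-N)/2}$. Once $L_N,R_N$ are controlled, Lemma~\ref{lem:difrence} shows the ratio to the minimal increment at the macroscopic-equilibration time is at most $Nk\,\gl^{\gep N}$, so an additional time of order $\varrho^{-1}(\gep N\log\gl+\log(Nk))=O(\gep)N+O(\log N)$ drives it below $1$ via Markov's inequality — which recovers exactly your claimed $O(\gep)N+O(\log N)$ time scale, but by a much cleaner route than a Burgers sandwich plus spectral-gap hand-waving (the raw spectral-gap bound, which scales with $\log(1/\pi_{\min})\asymp Nk$, gives nothing near $O(\log N)$). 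In short: your high-level plan is right and the time constants come out correctly, but (i) you still need an analogue of Proposition~\ref{lerimo} to turn hydrodynamic closeness into control of the discrepancy support, and (ii) the natural finishing tool is the Hopf--Cole/weighted-area contraction, not a discrepancy-area argument you dismissed nor a naive spectral-gap bound.
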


The result implies in particular that at first order, the mixing time does not depend on $\gep\in(0,1)$, meaning that on 
the appropriate time-scale, for large values of $N$ the distance to equilibrium drops abruptly from $1$ to $0$.
This phenomenon is referred to as \textit{cutoff} and was first observed in the context of card shuffling~\cite{AldDia, DiaSha}. It is known to occur for a large variety of Markov chains, see for instance~\cite{LevPerWil}.
In the context of the exclusion process, it has been proved in \cite{Lac16} that cutoff holds for the Symmetric Simple Exclusion Process (SSEP)
which is obtained by setting $p=1/2$ in the generator \eqref{defgen}.

\begin{teorema}[Theorem 2.4 in \cite{Lac16}]
 When $p=1/2$, for any sequence $k_N$ that tends to infinity and satisfies $k_N\le N/2$ for all $N$, we have
 
  \begin{equation}
   \lim_{N\to \infty}\frac{\Tm^{N,k_N}(\gep)}{N^2\log k_N}= \frac{1}{\pi^2}.
 \end{equation}
 
\end{teorema}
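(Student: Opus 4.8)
The plan is to prove matching lower and upper bounds, both governed by $\mu_1:=1-\cos(\pi/N)\sim \pi^2/(2N^2)$, the spectral gap of a single reflected random walk on $\lint 1,N\rint$, which is also the spectral gap of $\cL_{N,k}$ when $p=1/2$. The claimed value $\frac1{\pi^2}N^2\log k_N=\frac{\log k_N}{2\mu_1}\,(1+o(1))$ is the usual cutoff location ``half the relaxation time times the logarithm of the effective dimension,'' the effective dimension here being the number of particles.

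\textbf{Lower bound.} I would use the linear statistic $S(\eta):=\sum_{x=1}^N \cos\!\big(\pi(x-\tfrac12)/N\big)\,\eta(x)$. Since the exclusion constraint is invisible to functions linear in the occupation variables (a discordant swap moves $S$ exactly as a free jump would), and $\big(\cos(\pi(x-\tfrac12)/N)\big)_{x}$ is the first Neumann eigenfunction of the discrete Laplacian, one has $\cL_{N,k}S=-\mu_1 S$ identically on $\gO_{N,k}^0$; hence $\E_\xi[S(\eta^\xi(t,\cdot))]=e^{-\mu_1 t}S(\xi)$ while $\E_{\pi_{N,k}}[S]=0$. From the extremal configuration $\xi^{\min}$ one computes $|S(\xi^{\min})|\asymp k$, and $\var_{\pi_{N,k}}(S)\asymp k$; moreover the SSEP started from a deterministic configuration preserves negative association, so $\var_\xi(S(\eta^\xi(t,\cdot)))=O(k)$ uniformly in $t$. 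Chebyshev's inequality then shows that at $t=(1-\delta)\frac1{\pi^2}N^2\log k$ the law $P^{N,k}_t(\xi^{\min},\cdot)$ concentrates $S$ near $k^{(1+\delta)/2+o(1)}$, while under $\pi_{N,k}$ the statistic $S$ stays within $O(\sqrt k)$ of $0$; since $k^{(1+\delta)/2+o(1)}\gg\sqrt k$ (here $k_N\to\infty$ is used), these laws are asymptotically singular, so $d^{N,k}(t)\to1$ and $\Tm^{N,k}(\gep)\ge(1-\delta)\frac1{\pi^2}N^2\log k$ for every $\gep\in(0,1)$ and every $\delta>0$.

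\textbf{Upper bound.} First I would reduce to extremal initial data: by the monotonicity of the SSEP for the height-function partial order on $\gO_{N,k}^0$ (whose two extreme points are $\xi^{\min}$ and the reflected configuration $\xi^{\max}$ with all particles on the left), together with the grand monotone coupling and a censoring argument, $d^{N,k}(t)\le \|P^{N,k}_t(\xi^{\min},\cdot)-\pi_{N,k}\|_{TV}+\|P^{N,k}_t(\xi^{\max},\cdot)-\pi_{N,k}\|_{TV}$, the two summands being equal by the symmetry $x\mapsto N+1-x$. It then suffices to bound one of them by $\tfrac12\sqrt{\chi^2_{\xi^{\min}}(t)}$ and to expand $\chi^2_{\xi^{\min}}(t)=\sum_f e^{-2\gl_f t}f(\xi^{\min})^2$, the sum running over an $L^2(\pi_{N,k})$-orthonormal basis of eigenfunctions of $-\cL_{N,k}$ with eigenvalue $\gl_f>0$. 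The ``one-mode'' eigenfunctions are the linear statistics $\xi\mapsto\sum_i\cos(\pi j(\xi_i-\tfrac12)/N)$ with eigenvalues $\mu_j=1-\cos(\pi j/N)$; because the starting point is extremal, their total contribution is $\asymp k\,e^{-2\mu_1 t}=k^{-\delta+o(1)}\to0$ at $t=(1+\delta)\frac1{\pi^2}N^2\log k$ (the $j\ge2$ terms being summably smaller).

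The main obstacle is controlling the rest of the spectrum. Eigenfunctions involving $m\ge2$ modes come in families of size of order $\binom{N-1}{m}$ and have energies only $\gl_f\gtrsim m^3/N^2$, so one needs sharp control of $\|f\|_\infty^2/\|f\|_{L^2(\pi_{N,k})}^2$ to overcome the combinatorial entropy $e^{m\log N}$ -- this is the technical core of the $L^2$ estimate. A second difficulty appears when $k_N$ grows so slowly that $\log k_N=o(\log N)$: then the $\chi^2$ bound is no longer tight, and one must instead analyse the coalescence time of the monotone grand coupling of the two extremal chains directly (equivalently, couple the sparse particle configuration with $k$ independent reflected random walks and absorb the rare collisions into an error term). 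I expect this small-$k$ regime, where the naive coupling estimate only sees $\log(Nk)$ and must somehow be sharpened to $\log k$, to be the genuinely delicate point.
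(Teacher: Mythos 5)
This statement is the paper's Theorem~B: it is quoted from \cite{Lac16} as background and is not reproved here. (The paper's own Theorem~\ref{Th:smallbias} treats the small--bias regime $Nb_N/\log k_N\to 0$ with $k_N\to\infty$, which is the closest analogue, but the model in this paper always has $p_N>1/2$, so $p=1/2$ itself is not covered.) Comparing your proposal against the method of \cite{Lac16} --- which is also the method of Sections~\ref{Sec:LBSB}--\ref{Sec:UBSB} here --- the verdict is split.

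Your lower bound is the right argument and matches the paper's. It is Wilson's two--moment method applied to the first eigenfunction, which for $p=1/2$ is exactly the linear statistic you wrote; the variance bound via negative association (or, as in Lemma~\ref{Lemma:BoundVar}, via the predictable bracket of the associated martingale) gives the stated conclusion and there is no issue here.

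Your upper bound is a genuinely different route, and I do not think it can be made to work. The $\chi^2$ expansion from the extremal start has $\chi^2_{\xi^{\min}}(0)=\pi_{N,k}(\xi^{\min})^{-1}-1=\binom{N}{k}-1$, which is enormous compared to $k$, so almost all of the initial $\chi^2$ mass sits on eigenfunctions \emph{not} of one--mode type. To make the decomposition converge by the claimed time you would need, for each $m\ge 2$, a near--optimal bound on $\sum_{m\text{-mode }f}\widehat f(\xi^{\min})^2$, i.e.\ on how the spectral projection of $\delta_{\xi^{\min}}/\pi_{N,k}(\xi^{\min})$ distributes across the spectrum. Only the $N-1$ one--mode (duality) eigenfunctions of SSEP on a segment have an explicit form; the $m\ge 2$ eigenfunctions are not products of one--mode functions and no workable description of their sup norms or of their values at $\xi^{\min}$ is available, so the ``technical core'' you flag is not a gap one can fill --- it is the reason this route is abandoned. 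You also correctly observe that when $\log k_N=o(\log N)$ even the heuristic count of modes collapses. The approach in \cite{Lac16} (and in Section~\ref{Sec:UBSB} of this paper) avoids the full spectrum entirely: it couples $h^\wedge$ with an equilibrium--started chain under a carefully chosen monotone grand coupling, tracks the weighted area $A_t$ between them, shows via a contraction estimate --- not $\chi^2$ --- that $\E[A_t]$ decays at rate $\gap_N$, and then, once $A_t$ is of order $k^{1/2+o(1)}N$, uses a diffusion/martingale--bracket estimate to force $A_t$ to hit zero in an additional time $o(\Tm)$. That diffusive finishing step is precisely the mechanism that recovers the factor the spectral bound cannot see, including for slowly growing $k_N$; your last--paragraph sketch of coupling to independent walks is closer to earlier arguments (Morris, Oliveira) that do \emph{not} produce the sharp constant.
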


While cutoff occurs in the two cases, it appears to be triggered by different mechanisms. When $p>1/2$,
the mixing time is determined by the time needed for the particle density profile to reach its macroscopic equilibrium:
After rescaling time and space by $N$, the evolution of the particle density has a non-trivial scaling limit (the inviscid Burgers' equation with zero-flux boundary conditions), which fixates at time $\frac{(\sqrt{\alpha}+\sqrt{1-\alpha})^2}{p-q}$.  The first order asymptotic for the mixing time is thus determined by the time 
the density profile needs to reach equilibrium.

\medskip

When $p=1/2$, the right-time scale to observe a macroscopic motion for the particles is $N^2$, and it is worth mentioning that the scaling limit obtained for the particle density 
(the heat equation) does not fixate in finite time. To reach equilibrium, however, we must wait for a longer time, of order $N^2\log N$,
which is the time needed for local fluctuations in the particle density to come to equilibrium.

\medskip

We are interested in studying the process when the drift tends to zero: this requires to understand the transition between these two patterns of relaxation to equilibrium.
Hence we consider $p$ to be a function of $N$ which is such that the bias towards the right $b_N:=p_N-q_N=2p_N-1$ vanishes
\begin{equation}\label{vanishing}
\lim_{N\to \infty} b_N=0.
\end{equation}
In this regime, the model is sometimes called WASEP for Weakly Asymetric Simple Exclusion Process. Its convergence to equilibrium has already been studied in\cite{LabbeKPZ, LevPer16}.  
In \cite{LevPer16} the authors identify 
the order of magnitude of the mixing time as a function of $b_N$ in full generality.
However the approach used in \cite{LevPer16} does not allow to find the exact asymptotic for the mixing time nor to prove cutoff,
and does not answer our question concerning the pattern of relaxation to equilibrium.

\subsection{Results}
\medskip

We identify two main regimes for the pattern of relaxation to equilibrium.
The \textsl{large bias} regime where
\begin{equation}\label{largebias}
\lim_{N\to \infty} \frac{N b_N}{(\log k_N)\vee 1}=\infty.
\end{equation}
and the \textsl{small bias} regime where
\begin{equation}\label{smallbias}
\begin{cases}
\lim\limits_{N\to \infty} \frac{Nb_N}{\log k_N}=0,\\
\lim\limits_{N\to \infty} k_N=\infty. 
\end{cases}
\end{equation}
We identify the asymptotic expression for the mixing time in both regimes.
In the large bias regime we show that the mixing time coincides with the time needed by the particle density
to reach equilibrium like in the constant bias case.

\begin{theorem}\label{Th:largebias}
 
 When \eqref{largebias} holds, and $\lim_{N\to \infty} k_N/N=\alpha\in [0,1]$, we have 
 for every $\gep\in (0,1)$
 \begin{equation}
  \lim_{N\to \infty} \frac{b_N \Tm^{N,k_N}(\gep)}{N}= \left(\sqrt{\alpha}+\sqrt{1-\alpha}\right)^2.
 \end{equation}
 
 \end{theorem}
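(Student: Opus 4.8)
The plan is to prove Theorem~\ref{Th:largebias} by separately establishing a matching upper and lower bound on $\Tm^{N,k_N}(\gep)$ on the time scale $N/b_N$, using a comparison with the known constant-bias result (Theorem~A) together with a coupling/censoring argument. Throughout, write $t_{\mathrm{eq}}(\alpha) := (\sqrt\alpha+\sqrt{1-\alpha})^2$ for the target constant, and note that in the large bias regime $b_N\to 0$ but $Nb_N\to\infty$, so the relevant time scale $N/b_N$ satisfies $N/b_N\gg \max(\log k_N, 1)$.

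\textbf{Lower bound.} I would first show $\liminf_N b_N\Tm^{N,k_N}(\gep)/N \geq t_{\mathrm{eq}}(\alpha)$ by tracking a macroscopic observable. Starting from the extremal configuration $\xi^{\max}$ (all particles on the left), the expected value of $A(\eta(t))$ — or better, a rescaled ``height function'' or the normalized current across a cut — evolves on the time scale $N/b_N$ according to (a boundary-value version of) the inviscid Burgers' equation, exactly as in the constant-bias analysis of~\cite{LabLac16}: the drift contributes order $b_N$ per unit time per edge, so the hydrodynamic time scale is $N/b_N$ rather than $N$. Before the density profile fixates, some macroscopic functional of $\eta$ (e.g.\ the number of particles in $\lint 1, \lfloor\beta N\rfloor\rint$ for a suitable $\beta$) differs from its equilibrium value by a macroscopic amount, while its fluctuations under $\pi_{N,k}$ and under $P_t(\xi,\cdot)$ are both $o(N)$ (a second-moment / concentration estimate, using reversibility and a path/canonical-paths bound on the variance). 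A Chebyshev/distinguishing-statistic argument then forces $d^{N,k}(t)\to 1$ for $t = s N/b_N$ with $s < t_{\mathrm{eq}}(\alpha)$. The technical content here is the law-of-large-numbers for the empirical density of the WASEP started from the worst case on time scale $N/b_N$ — this should follow by adapting the hydrodynamic arguments of~\cite{LabLac16, LabbeKPZ} and checking the error terms are uniform given $Nb_N\to\infty$.

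\textbf{Upper bound.} For the matching upper bound I would combine two ingredients. First, a \emph{macroscopic relaxation} step: after time $(t_{\mathrm{eq}}(\alpha)+\delta)N/b_N$, with high probability the configuration $\eta(t)$ started from \emph{any} initial condition has its empirical density profile $\delta'$-close to the equilibrium step profile, and moreover (via monotone coupling and the grand-coupling/height-function monotonicity used in~\cite{LabLac16}) lies, up to a small number of discrepancies, below a suitable ``nice'' configuration. Second, a \emph{local equilibration} step: once the profile is macroscopically correct, one needs that the residual distance to $\pi_{N,k}$ is already small. This is where the hypothesis $Nb_N\gg\log k_N$ is crucial — it guarantees that the time $N/b_N$ already available exceeds the local equilibration time, which in the relevant regions behaves like the symmetric local relaxation time $N^2\log k_N / (\text{length}^2)$ rescaled, but because the bias confines particles the effective local scale is $O(1/b_N)$ rather than $O(N)$, giving a local mixing time of order $b_N^{-2}\log k_N = (N/b_N)\cdot(b_N^{-1}N^{-1}\log k_N) = o(N/b_N)$. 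I would make this rigorous by a censoring argument (Peres--Winkler monotonicity for the exclusion process): censor all moves except in a window where the bias dominates, compare to an auxiliary chain (e.g.\ a biased walk on a shorter segment or a product-like reference measure), and bound its mixing time. One then patches the macroscopic and local estimates via the triangle inequality for total variation and a union bound over $O(\log k_N/\log\log\dots)$ scales or over the finitely many macroscopic regions of the limiting profile, exactly in the spirit of the upper bounds in~\cite{Lac16, LabLac16}.

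\textbf{Main obstacle.} I expect the hard part to be the upper bound, and within it the uniform control of the \emph{local} fluctuations around the macroscopic profile: one must show that after the hydrodynamic time $(t_{\mathrm{eq}}+\delta)N/b_N$ the chain is genuinely close in total variation — not merely close in some macroscopic observable — and this requires quantitative local equilibration estimates that are uniform in the initial condition and that exploit the precise inequality $Nb_N/\log k_N\to\infty$ to absorb the logarithmic corrections. The delicate point is that near the ``shock'' or near the region where the equilibrium profile transitions, the local drift and the local diffusive fluctuations compete, and one needs the large-bias hypothesis to ensure the drift wins on the available time scale; controlling the constant here (so that no spurious factor appears and the limit is exactly $t_{\mathrm{eq}}(\alpha)$) is what forces the argument to go through the sharp hydrodynamic analysis rather than a soft comparison. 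A secondary difficulty is handling the boundary density cases $\alpha\in\{0,1\}$ uniformly, since there $\log k_N$ can be much smaller than $\log N$ and the parametrization by $k_N$ rather than $N$ must be respected throughout.
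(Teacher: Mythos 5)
Your broad two-step outline (hydrodynamic lower bound; macroscopic relaxation then residual-coupling upper bound) matches the paper's structure, but you are missing the key technical idea behind the upper bound, and your diagnosis of where the difficulty lies is somewhat off.

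\textbf{Where the real obstacle is.} You identify the hard part as ``local equilibration after the macroscopic profile is correct'' and propose to handle it with a Peres--Winkler censoring argument. But in the large-bias regime, once the two coupled chains' extremal configurations agree on the macroscopic scale, the paper finishes with a soft argument: the strictly monotone function $f^{(0)}_{N,k}$ (related to the discrete Hopf--Cole transform, cf.\ Section~\ref{sec:eigen}) contracts at rate $\varrho\sim b_N^2/2$ under the dynamics, so the normalized weighted area $A_t=\big(f^{(0)}_{N,k}(h^\wedge_t)-f^{(0)}_{N,k}(h^\vee_t)\big)/\delta_{\min}(f^{(0)}_{N,k})$ decays geometrically from a controlled starting value (Lemma~\ref{lem:difrence}), and a supermartingale hitting-time bound (Proposition~\ref{prop:solskjaer}) finishes the job in $o(N/b_N)$. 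No censoring or local-window comparison is needed. What \emph{is} genuinely hard is the prior step you are taking for granted: showing that by time $\big(t_{\mathrm{eq}}(\alpha)+\delta\big)N/b_N$ the leftmost particle (equivalently $L_N(t)$) has actually reached $\geq(1-\alpha-\delta)N$. The hydrodynamic limit (Proposition~\ref{prop:lidro}) controls the bulk density profile but is completely blind to $o(N)$ lagging particles near the left edge, where the limiting density vanishes; so ``the empirical density is $\delta'$-close to the step profile'' does not by itself bound where the leftmost particle is. Since the starting value of the weighted area in Lemma~\ref{lem:difrence} scales like $\gl^{D_N}$ with $D_N$ the distance of the edge to its equilibrium location, an uncontrolled edge costs a factor $e^{\Theta(b_N N)}$, which is fatal.

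\textbf{What the paper does instead.} The missing ingredient is Proposition~\ref{lerimo}, which is proved via a comparison with a tailored auxiliary exclusion process on the infinite line (Section~\ref{sec:auxi}): the $k$ particles of $\eta^\wedge$ are sandwiched between a $(k{+}1)$-st ``slow'' particle to the right that only jumps rightwards at rate $\gb b_N$ ($\gb<1$), and one checks that the particle spacings of this auxiliary system are \emph{stationary} with explicit geometric laws (Lemma~\ref{station}). Starting from stationarity gives uniform control on $\hat\eta_{n+1}(t)-\hat\eta_1(t)$ by a first-moment bound, hence on the leftmost particle, and the estimate holds over the whole large-bias range $\eqref{largebias}$. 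This is the paper's main new contribution beyond~\cite{LabLac16} and is not obtainable from censoring or from softer hydrodynamic arguments. Your proposal does not contain, or substitute for, this step.

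\textbf{Minor points.} Your lower bound sketch is viable --- a particle count in a macroscopic window $[\beta_1 N,\beta_2 N]$ with $\ell_\alpha(s)<\beta_1<\beta_2<1-\alpha$ is indeed a valid distinguishing statistic at time $sN/b_N$ with $s<t_{\mathrm{eq}}(\alpha)$, and the concentration needed is already given by the hydrodynamic law of large numbers; the paper uses the equivalent observable $\ell_N$. Also, your appeal to a ``local mixing time $b_N^{-2}\log k_N$'' imports small-bias intuition that the large-bias upper bound does not use; that quantity happens to be $o(N/b_N)$ here, but the mechanism is the eigenfunction contraction, not a local-window mixing-time estimate. Finally, the case $\alpha=1/2$ admits a much simpler direct spectral proof (Section~\ref{specz}), a shortcut your plan does not notice.
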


To state our result in the small bias regime, let us introduce the quantity
\begin{equation}\label{thegap}
\gap_{N}:= (\sqrt{p_N}-\sqrt{q_N})^2+ 4 \sqrt{p_Nq_N} \sin\left( \frac{\pi}{2N}\right)^2,
\end{equation}
which corresponds to the spectral gap associated with the generator \eqref{defgen}. Notice that it does not depend on the number $k$ of particles in the system.
The pattern of relaxation is similar to the one observed in the symmetric case.
\begin{theorem}\label{Th:smallbias}
 
 When \eqref{smallbias} holds, we have  
 
 \begin{equation}
   \lim_{N\to \infty} \frac{\gap_{N} \Tm^{N,k_N}(\gep)}{\log k_N}= \frac12.
   \end{equation}
 
 \end{theorem}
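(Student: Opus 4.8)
\smallskip

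\noindent\emph{Proof strategy.} Theorem~\ref{Th:smallbias} asserts a cutoff at time $t_N:=\tfrac{\log k_N}{2\,\gap_{N}}$, so it reduces to matching upper and lower bounds $\Tm^{N,k_N}(\gep)=(1+o(1))\,t_N$. Since $b_N\to 0$ one has $\gap_{N}=(\sqrt{p_N}-\sqrt{q_N})^2+4\sqrt{p_Nq_N}\sin^2(\pi/2N)\sim\tfrac12 b_N^2+\tfrac{\pi^2}{2N^2}$, so $t_N$ interpolates between the SSEP scale $\tfrac{N^2\log k_N}{\pi^2}$ of \cite{Lac16} (when $Nb_N\to0$) and $\tfrac{\log k_N}{b_N^2}$. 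Throughout I would work with the height function $h_\xi\colon\lint 0,N\rint\to\bbZ$, $h_\xi(0)=0$, $h_\xi(x)-h_\xi(x-1)=1-2\xi(x)$, on which the generator \eqref{defgen} acts by flipping local maxima down at rate $q_N$ and local minima up at rate $p_N$, the endpoints $h(0)=0$ and $h(N)=N-2k$ being frozen. Two structural inputs: the dynamics admits a grand monotone coupling for the pointwise order on height functions, with extreme states $h^{\max}$ (from $\xi^{\min}$) and $h^{\min}$ (from $\xi^{\max}$); and $\cL_{N,k}$ has spectral gap $\gap_{N}$ for every $k$, the bottom of the spectrum being governed by the single-particle ASEP eigenfunction $\psi_N$, a mild tilt of $x\mapsto\sin(\pi x/N)$. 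The heuristic for the constant: started from $\xi^{\max}$, the mean height profile $\E_t[h]$ differs from its equilibrium value by $\asymp -k\,e^{-\gap_{N}t}$ in the bulk, while the equilibrium fluctuations of $h$ are $\asymp\sqrt k$; equivalently, the $L^2(\pi_{N,k})$-overlap of $\xi^{\max}$ with the slowest eigenmode is $\asymp\sqrt k$, and it takes time $t_N$ for the factor $e^{-\gap_{N}t}$ to bring this down to $O(1)$.

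For the \textbf{lower bound} I would use Wilson's second-moment method from the initial condition $\xi^{\max}$, with the linear observable $W(\xi)=\sum_x\Psi_N(x)\xi(x)$, where $\Psi_N$ is chosen (as the appropriate tail-sum of $\psi_N$) so that $W$ is, up to an additive constant, the dominant eigenfunction. One needs (i) $\big|\E^{\xi^{\max}}_t[W]-\E_{\pi_{N,k}}[W]\big|\ge c\,kN\,e^{-\gap_{N}t}$ up to time $t_N$, and (ii) $\var_t(W)=O\big(\var_{\pi_{N,k}}(W)\big)$ uniformly in $t$ and the initial condition, which for $W$ near-eigenfunction reduces via a Dirichlet-form identity to bounding the carré du champ of $W$ by the number of particle--hole interfaces (at most $2k$); then Chebyshev's inequality forces $d^{N,k_N}(t)\to1$ for $t\le(1-\gep)t_N$, the constant $\tfrac12$ coming from comparing the mean gap $\asymp kN\,e^{-\gap_{N}t}$ with the standard deviation $\asymp N\sqrt k$. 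For (i) the point is that $x\mapsto\E_t[\xi(x)]$ would solve the \emph{linear} single-particle equation $\partial_t u=\cL_{N,1}u$ --- whose slowest mode is exactly $-\gap_{N}$ --- were it not for the exclusion rule, which contributes a correction equal to $b_N$ times a discrete gradient of the two-point functions $\E_t[\xi(x)\xi(x{+}1)]-\E_{\pi_{N,k}}[\xi(x)\xi(x{+}1)]$; the hypothesis $Nb_N\ll\log k_N$ in \eqref{smallbias} is precisely what makes the accumulated correction $o\big(kN\,e^{-\gap_{N}t}\big)$ on this time scale.

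For the \textbf{upper bound} I would follow the censoring and $L^2$ scheme of \cite{Lac16}: using the Peres--Winkler censoring inequality and monotonicity, reduce to bounding the distance from a favourable class of states after a burn-in period of negligible length $o(t_N)$, which kills the contribution of all but the bottom of the spectrum; from such a state the $\chi^2$-distance is dominated by $\asymp k\,e^{-2\gap_{N}t}$, so that $d^{N,k_N}(t)\lesssim\sqrt k\,e^{-\gap_{N}t}\to0$ for $t\ge(1+\gep)t_N$, reproducing the constant $\tfrac12$ and the $\log k_N$ (rather than $\log N$). Two modifications of the symmetric argument are needed: one must track the effect of the bias on the low-lying spectrum and on the relevant eigenfunctions --- which, when $b_N\gg1/N$, has $\Theta(Nb_N)$ nearly-degenerate levels at the bottom, their cumulative contribution being kept at $k^{o(1)}$ precisely because $Nb_N\ll\log k_N$ --- and one must again control the $b_N$-corrections of the type met in the lower bound, here to show the burn-in and the $\chi^2$ estimate are not spoiled.

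The \textbf{main obstacle} is exactly this non-closure created by the exclusion rule: unlike the SSEP, where $\E_t[\xi(x)]$ obeys an exact discrete heat equation and the analysis is essentially spectral, here every relaxation estimate carries an error proportional to $b_N$ times an out-of-equilibrium difference of correlation functions. Showing that these errors are negligible on the scale $t_N$, \emph{uniformly over the whole regime} \eqref{smallbias} --- most delicately when $b_N$ is of the same order as $1/N$, where $\gap_{N}\asymp N^{-2}$ and the bias and the diffusive relaxation act on the same scale --- requires a priori bounds on both the size and the spatial regularity of $\E_t[\xi(x)\xi(y)]-\E_{\pi_{N,k}}[\xi(x)\xi(y)]$, bootstrapped from coarse to fine using monotonicity and censoring. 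This is the technical heart of the proof; the rest is a careful transcription, with the bias retained, of the symmetric-case machinery of \cite{Lac16}.
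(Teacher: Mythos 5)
The central missing ingredient in your plan is the discrete Hopf--Cole (G\"artner) transform, and this is not a minor technical point --- it is what makes the whole analysis feasible and what the paper is built on. You propose to run Wilson's method with a \emph{linear} observable $W(\xi)=\sum_x\Psi_N(x)\xi(x)$ and you correctly notice that for $p\neq q$ the one-particle equation for $\E_t[\xi(x)]$ does not close: the exclusion rule produces a correction of order $b_N$ times out-of-equilibrium two-point functions, and you then flag the control of those corrections, uniformly over the regime \eqref{smallbias}, as ``the technical heart of the proof.'' But there is no linear eigenfunction when $p\neq q$, and trying to beat down those correlation errors is the wrong road. The paper instead uses the exponentiated height function: the functions
$f^{(j)}_{N,k}(\zeta)=\sum_x\sin(j\pi x/N)\,\frac{\lambda^{\zeta(x)/2}-a_{N,k}(x)}{\lambda-1}$
(Section~\ref{sec:eigen}) are \emph{exact} eigenfunctions of $\cL_{N,k}$ with eigenvalue $-\gamma_j$, because $V(t,x)=\E[\lambda^{h^{\zeta}_t(x)/2}-a_{N,k}(x)]$ solves the linear lattice heat equation \eqref{Eq:V} with no correction at all. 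With $f_{N,k}=f^{(1)}_{N,k}$ in hand, the lower bound is the same Wilson two-moment computation you describe, but with $\E[f_{N,k}(h^\wedge_t)]=e^{-\gap_N t}f_{N,k}(\wedge)$ exactly and the variance bounded by an explicit martingale bracket (Lemmas~\ref{lem:firstmom}, \ref{Lemma:BoundVar}); no two-point-function bootstrapping is needed. Your heuristic for the constant $1/2$ (signal $\asymp kN$ against noise $\asymp N\sqrt{k}$) is right, but as stated your lower bound does not actually have a workable observable.

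On the upper bound you diverge from the paper in method as well. You propose Peres--Winkler censoring plus a $\chi^2$ estimate, with a burn-in to concentrate on the bottom of the spectrum. The paper does not use censoring at all: it couples $h^\wedge$ to a stationary copy $h^\pi$, tracks the weighted area $A_t=\delta_{\min}(f^{(0)}_{N,k})^{-1}\big(f^{(0)}_{N,k}(h^\wedge_t)-f^{(0)}_{N,k}(h^\pi_t)\big)$, which is again built on the Hopf--Cole weights and is a non-negative supermartingale, first contracts it to $k^{1/2-\delta/5}N$ by time $(1+\delta/2)\log k/(2\gap_N)$ via a spectral $\ell^2$ bound (Lemma~\ref{Lemma:T0}), and then kills it in time $o(\log k/\gap_N)$ through a multi-scale diffusion argument (Lemmas~\ref{Lemma:AN}--\ref{Lemma:BrackDeriv}, Proposition~\ref{Prop:BoundMax0}, Appendix~\ref{sec:diffu}). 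This requires a specific grand coupling (Appendix~\ref{Appendix:Coupling}) chosen to maximize the fluctuations of $A_t$. Your $L^2$/censoring route would at minimum need new inputs not in the paper (a useful $\chi^2$ decomposition for the biased process, and control of the nearly-degenerate low modes you mention), and it is not clear it works; in any case both halves of your proposal, as written, stop short of a proof because the decisive linearizing change of variables is absent.
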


Using Taylor expansion for $\gap_N$ we have, whenever $b_N$ tends to zero
\begin{equation}\label{eq:taylorgap}
\gap_{N}\stackrel{N\to \infty}{\sim} \frac{1}{2}\left( b^2_N+ \left(\frac{\pi}{N}\right)^2 \right).
\end{equation}
Thus in particular we have 
\begin{equation}
\Tm^{N,k_N}(\gep)\sim
\begin{cases}
  \frac{\log k_N}{b_N^2} & \text{if } 1/N \ll b_N\ll \log k_N/N, \\
  \frac{1}{\pi^2} N^2 \log k_N &\text{ if } 0< b_N \ll 1/N, \\
\frac{1}{\pi^2+\beta^2} N^2 \log k_N  &\text{ if } b_N\sim \beta/N.
\end{cases}
\end{equation}

\medskip

Note that our classification of regimes \eqref{largebias}-\eqref{smallbias} does not cover all possible choices of $b_N$.
Two cases have been excluded for very different reasons:
\begin{itemize}
 \item When $b_N=O(N^{-1})$ and $k_N$ is bounded, then we have a system of $k$ diffusive interacting random walks.
 This system does not exhibit cutoff and has a mixing time of order $N^2$ 
 (The upper bound can actually be deduced from argument presented in Section \ref{preuv2} and
 the lower bound is achieved e.g. by looking at the expectation and variance of the number of particles on right half of the segment, like what is done in
 \cite[Section 6]{Morris06} ).
\item When $b_N$ is of order $\log k_N/N$  the time at which the density profile reaches
its equilibrium and the time needed for local fluctuations to reach their equilibrium values are of the same order and 
we believe that there is an interaction between the two phenomena. We provide a more detailed conjecture in Section \ref{conjectos}
\end{itemize}

\begin{remark}
We have not included here results concerning the biased card shuffling considered in \cite{Benjamini, LabLac16}.
Let us mention that while our analysis should also yield optimal bound for the mixing time of this process
when \eqref{largebias} holds (i.e.\ $\Tm^{k,N}(\gep)\sim 2 N /(p_N-q_N)$), it seems much more difficult to  
prove the equivalent of Theorem \ref{Th:smallbias}. The main reason is that the coupling presented in Section \ref{Appendix:Coupling} cannot be 
extended to a coupling on the permutation process. Building on and adapting the techniques presented in \cite[Section 5]{Lac16} it should a priori be possible 
to obtain a result concerning the mixing time starting from an extremal condition (the identity or its symmetric), but this is out of the scope of the present paper.
\end{remark}

\subsection{Conjecture in the regime $b_N\asymp \log k_N/N$}\label{conjectos}

Let us here formulate, and heuristically support a conjecture concerning the mixing-time in the crossover regime 
where 
\begin{equation}\label{limbeta}
\lim_{N\to \infty} \frac{b_N N}{\log k_N}=\gb. 
\end{equation}
for some $\gb\in(0,\infty)$.
For the ease of exposition, while it should be in principle possible to extend the heuristic to the case of positive density (see Remark \ref{pluscomplique} below) 
we restrict ourselves to the case $\lim_{N\to \infty }k_N/N=0$. The justification we provide for the conjecture might be better understood after a first
reading of the entire paper.

\begin{conjecture}\label{jecture}
When $b_N$ and $k_N$ display the asymptotic behavior given by \eqref{limbeta},
we have for every $\gep>0$
\begin{equation}
 \lim_{N \to \infty} \frac{\Tm^{N,k_N}(\gep) \log k_N}{N^2}= \begin{cases}       \frac{2}{\gb}+\frac{1}{\gb^2}, \quad &\text{ if } \gb\le 1/2,\\
       \left( \frac{\sqrt{2}+2\sqrt{\gb}}{2\gb} \right)^2, \quad &\text{ if } \gb\ge 1/2.
                                                              \end{cases}
                                                              \end{equation}

\end{conjecture}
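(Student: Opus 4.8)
The plan is to establish matching upper and lower bounds for $\Tm^{N,k_N}(\gep)$ in the crossover regime~\eqref{limbeta}, following the two-phase decomposition that drives the proofs of Theorems~\ref{Th:largebias} and~\ref{Th:smallbias} but tracking the constants more carefully, since now the macroscopic relaxation time and the local equilibration time are comparable. Concretely, I would first argue that the density profile follows (after diffusive rescaling of space and the appropriate time rescaling) a viscous Burgers' equation whose viscosity is governed by $\gb$, and determine the time $t_\star(\gb)$ at which the macroscopic profile hits its equilibrium state; a Hopf--Cole / method-of-characteristics computation should give the two cases $\gb \le 1/2$ and $\gb \ge 1/2$, with the threshold $\gb = 1/2$ corresponding to the transition between a regime where the profile still develops a genuine front and one where the drift is too weak relative to the diffusion. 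Then, once the profile is at macroscopic equilibrium, the remaining discrepancy is carried by local fluctuations that relax on the scale $(\gap_N)^{-1}\log k_N$ as in Theorem~\ref{Th:smallbias}, contributing an additive term; expressed in units of $N^2/\log k_N$ this is $\tfrac{1}{\pi^2 + \gb^2}\cdot(\text{something})$, which one must reconcile with the claimed formulas $\tfrac{2}{\gb}+\tfrac{1}{\gb^2}$ and $\bigl(\tfrac{\sqrt 2 + 2\sqrt\gb}{2\gb}\bigr)^2$.

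For the upper bound I would use the grand coupling and the censoring/monotonicity arguments that appear later in the paper, splitting the evolution into: (i) a hydrodynamic phase of duration slightly more than $t_\star(\gb)\, N^2/\log k_N$ during which, with high probability, the configuration comes within an arbitrarily small macroscopic distance of $\xi^{\min}$ in a suitable height-function norm, and (ii) a fluctuation phase in which one invokes the $L^2$ / eigenfunction estimates for the WASEP generator — the spectral gap $\gap_N$ from~\eqref{thegap} and its Taylor expansion~\eqref{eq:taylorgap} — to bring the total-variation distance below $\gep$ after an additional $\tfrac12 (\gap_N)^{-1}\log k_N$. The delicate point is that these two phases cannot simply be concatenated as in the well-separated regimes: the profile is still moving on the same timescale on which fluctuations relax, so one needs a quantitative version of the hydrodynamic convergence with explicit (polynomial in $N$) error control, and a version of the fluctuation estimate that is robust to starting from a configuration whose profile is only approximately flat.

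For the lower bound I would exhibit a distinguishing observable. Two mechanisms compete: a ``macroscopic'' test function detecting that the height profile has not yet reached its terminal shape (this gives the $t_\star(\gb)$ contribution and is what forces the $\gb$-dependent first term), and a ``fluctuation'' test function — a near-eigenfunction of the generator with eigenvalue $\approx \gap_N$, in the spirit of the Wilson-type argument used for the small-bias lower bound — whose mean relaxes like $e^{-\gap_N t}$ while its variance stays $O(1)$, forcing the extra $\tfrac12 (\gap_N)^{-1}\log k_N$. The conjectured answer is presumably the time at which \emph{both} obstructions have disappeared, i.e. essentially $t_\star(\gb) + \tfrac12(\gap_N)^{-1}\log k_N$ rewritten in the normalization $N^2/\log k_N$, and checking that this algebraic combination reproduces $\tfrac2\gb + \tfrac1{\gb^2}$ for $\gb \le 1/2$ and $\bigl(\tfrac{\sqrt2 + 2\sqrt\gb}{2\gb}\bigr)^2$ for $\gb \ge 1/2$ is a bookkeeping exercise with $\gap_N \sim \tfrac12(b_N^2 + (\pi/N)^2)$ and $b_N N \sim \gb \log k_N$.

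The main obstacle I anticipate is precisely the non-separation of timescales: in both the large- and small-bias regimes one of the two phases is negligible compared to the other, which is what makes the current techniques work; here neither is, so one genuinely needs to understand the joint evolution of the macroscopic profile and its fluctuations, and in particular to control how the still-evolving profile feeds into (or interferes with) the fluctuation spectrum. This is why the statement is only a conjecture — the honest version of the argument would require either a refined hydrodynamic estimate uniform over the relevant time window together with a perturbed spectral analysis, or a genuinely new coupling that handles both effects simultaneously, and the present paper's toolkit (grand coupling, censoring, the $L^2$ bound via $\gap_N$, and the area/height-function comparison) does not obviously suffice. A secondary difficulty is matching the upper and lower bounds at the threshold $\gb = 1/2$, where the macroscopic picture changes character and one must check continuity of the conjectured constant: $\tfrac2\gb + \tfrac1{\gb^2}\big|_{\gb = 1/2} = 4 + 4 = 8$ and $\bigl(\tfrac{\sqrt2 + 2\sqrt{1/2}}{1}\bigr)^2 = (\sqrt2 + \sqrt2)^2 = 8$, so the two formulas do agree there, which is a reassuring consistency check but not a proof.
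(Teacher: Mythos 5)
The statement is a conjecture, and the paper supports it only with the heuristic of Section~\ref{conjectos}, so the relevant comparison is between your proposed mechanism and that heuristic; and there the proposal breaks down quantitatively. You posit an additive decomposition $\Tm^{N,k_N} \approx t_\star(\gb) + \tfrac12\gap_N^{-1}\log k_N$ and defer the check to ``a bookkeeping exercise,'' but the bookkeeping does not close. With $b_N N\sim\gb\log k_N$ one has $\gap_N\sim b_N^2/2$ (the $(\pi/N)^2$ term in \eqref{eq:taylorgap} is of order $N^{-2}$, negligible against $b_N^2\asymp (\log k_N)^2/N^2$, so the $\pi^2$ in your ``$\tfrac{1}{\pi^2+\gb^2}$'' should not appear); in units of $N^2/\log k_N$ the fluctuation term then contributes $1/\gb^2$, while the hydrodynamic time (the $\alpha=0$ instance of Theorem~\ref{Th:largebias}, i.e.\ $N/b_N$) contributes $1/\gb$. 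The sum $1/\gb+1/\gb^2$ is strictly below the conjectured $2/\gb+1/\gb^2$ for $\gb\le1/2$, and at $\gb=1/2$ it gives $6$ rather than $8$. The paper's heuristic is \emph{not} a concatenation of a hydrodynamic phase and a Wilson-type fluctuation phase. It is a single large-deviations computation: one optimizes over atypical constant-drift trajectories for ``straggler'' particles and finds the time $t$ at which the expected surplus $k^{1-(\gb t-z)^2/(2t)}$ near macroscopic location $z$ drops below the equilibrium fluctuation scale $k^{1/2-\gb(1-z)}$, uniformly over the relevant $z$. The missing $1/\gb$ in your count is precisely the contribution of these atypical trajectories, which neither the hydrodynamic limit nor an equilibrium Wilson bound sees.

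Two further misidentifications. The hydrodynamic limit in this regime is not a viscous Burgers equation: since $b_NN\sim\gb\log k_N\to\infty$, Proposition~\ref{prop:lidro} still applies and the rescaled profile follows the \emph{inviscid} Burgers equation; the viscous crossover occurs only when $b_N\asymp 1/N$, a regime the paper explicitly distinguishes from $b_N\asymp(\log k_N)/N$. Consequently the threshold $\gb=1/2$ has nothing to do with a change of character of the PDE; it is an equilibrium phenomenon, namely whether the leftmost particle under $\pi_{N,k}$ typically sits at distance $o(N)$ from $0$ (for $\gb\le1/2$) or at macroscopic distance $N(1-\tfrac1{2\gb})$ (for $\gb\ge1/2$), which determines the smallest $z$ over which the large-deviation inequality must hold and hence which branch of the formula is binding (plugging $z=0$ gives $t>2/\gb+1/\gb^2$, plugging $z=1-\tfrac1{2\gb}$ gives the other branch). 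Your closing observations — the continuity check at $\gb=1/2$ and the diagnosis that the two mechanisms genuinely interact so the paper's toolkit is insufficient — are both correct and are exactly why the statement is left as a conjecture; but the quantitative ansatz you propose is not the one that produces the conjectured constants.
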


To motivate this conjecture let us first describe the equilibrium measure and its dependence on $\gb$. 
As we are in the low-density regime, the equilibrium measure is quite close to the product measure one would obtain 
for the system without exclusion rules: the $k$ particles are approximately IID distributed with the distance from the right extremity being a  geometric of 
parameter $q_N/p_N\approx e^{- \frac{2\gb \log k}{N}}$.

Hence the probability of having a particle at site $\lfloor zN \rfloor$ for $z\in [0,1]$ is roughly of order  $k^{[1-2\gb (1-z)]+o(1)}/N$.
Thus, while particles are concentrated near the right extremity at equilibrium, the equilibrium ``logarithmic density'' of particles exhibit a non trivial profile 
in the sense that for any $z> 1-(2\gb)^{-1}$ we have
\begin{equation}\label{liquib}
\lim_{\gep\to 0}\lim_{N\to \infty}\frac{\log \left( \sum_{i= (z-\gep)N}^{(z+\gep)N} \xi(i) \right) }{\log k_N} \Rightarrow 1-2\gb (1-z),
\end{equation}
where the convergence holds in probability under the equilibrium measure $\pi_{N,k}$ when $N$ tends to infinity and $\gep$ tends to zero in that order.
The typical distance to zero of the left-most particle at equilibrium is also given by this profile in the sense that 
it is typically $o(N)$ when $\gb\le 1/2$ and of order
$N(1-\frac 1 {2\gb})$ when $\gb\ge 1/2$. While we only give heuristic justification for these statements concerning equilibrium, 
it is worth mentioning that they can be made rigorous by using the techniques exposed in Section 
\ref{Sec:Prelim}.

\medskip

To estimate the mixing time, we assume that the system gets close to equilibrium once the number 
of particles on any ``mesoscopic'' interval  of the form $[ (z-\gep)N, (z+\gep)N ]$ is close to its equilibrium value.
While the mean number of particle is of order  $k^{1-2\gb (1-z)}$ (cf.  \eqref{liquib}), the typical equilibrium fluctuation around this number should be 
the given by the square root due to near-independence of different particles and thus be equal to $k^{\frac{1}{2}-\gb (1-z)}$.

\medskip

To estimate the surplus of particles in this interval at time $t N^2 (\log k)^{-1}$ for $t> 1/\gb$ (note that $t=1/\gb$ is the time of macroscopic equilibrium
where most particles are packed on the right), we consider the number of particles that end up there after keeping a constant drift of order $z (\log k) /(N t)$,
which is smaller than $b_N$.
Neglecting interaction between particles and making a Brownian approximation for the random walk with drift, we obtain that the expected number of particles following this strategy is given by 
$$k \exp\left(- \log k \frac{(\gb t-z)^2}{2 t} \right)=k^{1-\frac{(\gb t-z)^2}{2 t}}.$$

Hence equilibrium should be attained when this becomes negligible with respect to the typical fluctuation $k^{\frac{1}{2}-\gb (1-z)}$ 
for all values of $z$ where we find particles at equilibrium. That is, when the inequality 
\begin{equation}\label{fluqueton}
 1-\frac{(\gb t-z)^2}{2 t}< \frac{1}{2}-\gb (1-z), 
\end{equation}
is valid for all $z\in[0,1]$ if $\gb\le 1/2$ or for all $z\ge 1-\frac{1}{2\gb}$ if $\beta\ge 1/2$.
A rapid computation show that one only needs to satisfy the condition for the smallest value of $z$ (either $1$ or $1-\frac{1}{2\gb}$),
which boils down to finding the roots of a degree two polynomial.
This yields that we must have $t>t_0$ where
\begin{equation}
 t_0:=\begin{cases}
       \frac{2}{\gb}+\frac{1}{\gb^2} &\text{ if } \gb\le 1/2,\\
       \left( \frac{\sqrt{2}+2\sqrt{\gb}}{2\gb} \right)^2 &\text{ if } \gb\ge 1/2.
      \end{cases}
\end{equation}

\begin{remark}\label{pluscomplique}
 Describing the equilibrium ``logarithmic profile'' of particles when the system has positive density is also possible (note that on the right of $(1-\alpha) N$ it is 
 the density of empty-sites that becomes the quantity of interest).  It is thus reasonable to extend the heuristic to that case.
 However the best strategy to produce a surplus of particle in that case becomes more involved, 
 as the zones with positive density of particles, which are described by the hydrodynamic
 evolution given in Proposition \ref{prop:lidro}, play a role in the optimization procedure. 
 For this reason we did not wish to bring the speculation one step further.
\end{remark}

\subsection{Organization of the paper}

In the remainder of the paper we drop the subscript $N$ in $k_N$ in order to simplify the notation.
The article is organised as follows. In Section \ref{Sec:Prelim}, we introduce the representation 
through height functions and collect a few results on the invariant measure, the spectral gap and the hydrodynamic
limit of the process. In Sections \ref{Sec:LBLB} and \ref{Sec:UBLB}, we consider the large bias case and prove respectively 
the lower and upper bounds of Theorem \ref{Th:largebias}: While the lower bound essentially follows from the hydrodynamic
limit, the upper bound is more involved and is one of the main achievement of this paper. In Sections \ref{Sec:LBSB} and \ref{Sec:UBSB}, 
we deal with the small bias case and prove respectively the lower and upper bounds of Theorem \ref{Th:smallbias}. Here again, the lower bound 
is relatively short and follows from similar argument as those presented by Wilson~\cite{Wil04} in the symmetric case, while the upper bound relies on a careful analysis of the area between the processes starting from the maximal and minimal configurations and under some grand coupling.

\section{Preliminaries and technical estimates}\label{Sec:Prelim}

\subsection{Height function ordering and grand coupling}

To any configuration of particles $\xi\in\Omega_{N,k}^0$, we can associate a so-called height function $h=h(\xi)$ defined by $h(\xi)(0) = 0$ and
$$ h(\xi)(x) = \sum_{y=1}^x \big(2\xi(y) - 1\big)\;,\quad x\in \lint 1,N\rint\;.$$
For simplicity, we often abbreviate this in $h(x)$. The height function is a lattice path that increases by $1$ from $\ell-1$ to $\ell$ if there is a particle at site $\ell$, and decreases by $1$ otherwise. Its terminal value therefore only depends on $k$ and $N$. The set of height functions obtained from $\Omega_{N,k}^0$ through the above map is denoted $\Omega_{N,k}$.\\
The particle dynamics can easily be rephrased in terms of height functions: every upward corner ($h(x)=h(x-1)+1=h(x+1)+1$) 
flips into a downward corner ($h(x)=h(x-1)-1=h(x+1)-1$) at rate $p$, while the opposite occurs at rate $q$. 
We denote by $(h^\zeta(t,\cdot),t\ge 0)$ the associated Markov process starting from some initial configuration $\zeta \in \Omega_{N,k}$.\\
It will be convenient to denote by $\wedge$ the maximal height function:
$$ \wedge(x) = x \wedge (2k-x)\;,\quad x\in\lint 1,N\rint\;,$$
and by $\vee$ the minimal height function:
$$ \vee(x) = (-x)\vee(x-2N+2k)\;,\quad x\in\lint 1,N\rint\;.$$
Though the dependence on $k$ is implicit in the notations $\wedge,\vee$, this will never raise any confusion as the value $k$ will be clear from the context.

It is possible to construct simultaneously on a same probability space and in a Markovian fashion, the height function processes $(h_t^\zeta,t\ge 0)$ starting from all initial conditions 
$\zeta\in\cup_k \Omega_{N,k}$ and such that the following monotonicity property is satisfied for all $k$ and all $\zeta,\zeta' \in \Omega_{N,k}$:
\begin{equation}\label{eq:grand}
\zeta \le \zeta' \Rightarrow h_t^\zeta \le h_t^{\zeta'}\;,\quad \forall t\ge 0\;.
\end{equation}
Here, $\zeta\le \zeta'$ simply means $\zeta(x) \le \zeta'(x)$ for all $x\in\lint 0,N\rint$. 
We call such a construction a monotone Markovian grand coupling, 
and we denote by $\P$ the corresponding probability distribution. 
The existence of such a grand coupling is classical, see for instance~\cite[Proposition 4]{LabLac16}.
In a portion of our proof, we require to use a specific grand coupling which is not the one displayed in \cite{LabLac16} and 
for this reason we provide an explicit construction in Appendix \ref{Appendix:Coupling}. 

Once a coupling is specified, by enlarging our probability space, one can also define the process $h^{\pi}_t$ which is started from an initial condition sampled from the equilibrium measure $\pi_{N,k}$, independently of  $h_t^{\zeta}, \zeta\in \gO_{N,k}$.

\medskip

Let us end up this section introducing the (less canonical) notation 
\begin{equation}\label{eq:strict}
 \zeta<\zeta' \quad \Leftrightarrow \quad \left( \zeta\le \zeta' \text{ and } \zeta\ne \zeta' \right).
\end{equation}
We say that a function $f$ on $\Omega_{N,k}$ is increasing (strictly) if $f(\zeta)\le f(\zeta')$ ($f(\zeta)< f(\zeta')$) whenever $\zeta<\zeta'$.
The minimal increment of an increasing function is defined by 
\begin{equation}\label{eq:minincr}
\delta_{\min}(f)=\min_{\zeta,\zeta'\in \gO_{N,k}, \zeta<\zeta'} f(\zeta')-f(\zeta).
\end{equation}

\subsection{The equilibrium measure in the large-bias case}

For $\xi\in \gO_{N,k}^0$ we set 
\begin{equation}\label{def1:lknrkn}
\begin{split}
\ell_{N}(\xi)&=\min\{ x\in \lint 1, N\rint \ : \  \xi(x)= 1 \},\\
r_{N}(\xi)&=\max\{ x\in \lint 1, N\rint  \ : \  \xi(x)= 0 \}.
\end{split}
\end{equation}

A useful observation on the invariant measure is the following. Given $\xi$, we define $\chi(\xi)$ as the sequence of particle spacings:
$$ \chi_i:= \xi_{i+1}-\xi_i\;,\; \text{ for } i\in \lint 1,k-1\rint, \quad  \chi_k=N+1-\xi_k.$$
From \eqref{def:A}, under $\pi_{N,k}$ the probability of a given configuration is proportional to 
\begin{equation}\label{eq:weights}
 \gl^{-\chi_1} \gl^{-2\chi_2} \ldots \gl^{-k \chi_k},
 \end{equation}
In other terms, under the invariant measure the particle spacings $(\chi_i)_{1\le i \le k}$ are distributed like independent geometric variables, with respective parameters $\gl^{-i}$, conditioned to the event $\sum_{i=1}^k \chi_k\le N$.

\begin{lemma}\label{lem:lbeq}
 When \eqref{largebias} holds we have for any $\gep>0$
 \begin{equation}\begin{split}
  \lim_{N\to \infty}& \pi_{N,k}( \ell_N \le (N-k)-\gep N)=0,\\
  \lim_{N\to \infty}& \pi_{N,k}( r_N \ge (N-k)+\gep N)=0,
  \end{split}
 \end{equation}
\end{lemma}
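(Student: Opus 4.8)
The plan is to exploit the explicit description of the invariant measure given just above the statement: under $\pi_{N,k}$ the particle spacings $(\chi_i)_{1\le i\le k}$ are distributed as independent geometric random variables with parameters $\gl^{-i}$, conditioned on the event $E_N:=\{\sum_{i=1}^k \chi_i\le N\}$. Write $b_N=p_N-q_N$ and recall $\gl=p_N/q_N$, so $\log\gl = b_N + O(b_N^3)$ (or simply $\log\gl\geq b_N$). The first observation I would make is that $\ell_N(\xi)=N-\sum_{i=1}^k\chi_i +1$ essentially (the leftmost particle sits at distance $\sum_i\chi_i$ from the right end, up to a $+1$), so the events in the statement are exactly statements about the total spacing $S:=\sum_{i=1}^k\chi_i$ being close to $k$ from above, respectively about $r_N$, which I will handle by a symmetric argument counting empty sites. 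More precisely, $\{\ell_N\le (N-k)-\gep N\}=\{S\ge k+\gep N\}$ and $\{r_N\ge (N-k)+\gep N\}$ can be rewritten as an event on the right-hand spacings.

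The key step is a first-moment / Markov-inequality bound on $S$ under the conditioned law. Let $G_i$ be an unconditioned geometric with parameter $\gl^{-i}$ (supported on $\{1,2,\dots\}$ or $\{0,1,\dots\}$ depending on convention), so $\E[G_i]=1+\frac{1}{\gl^i-1}$ roughly, and crucially $\E[G_i]\le 1 + \frac{C}{i b_N}$ for $i b_N$ bounded away from zero, while $\E[G_i]$ is uniformly bounded once $i b_N\gtrsim 1$. Summing, $\E[\sum_i G_i] = k + O\big(\sum_{i=1}^{k}\min(1,\frac{1}{i b_N})\big)=k+O\big(\frac{\log k}{b_N}\big)+O(k \wedge b_N^{-1})$. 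Under assumption \eqref{largebias}, $N b_N/(\log k \vee 1)\to\infty$, hence $\frac{\log k}{b_N}=o(N)$ and also $k\wedge b_N^{-1}=o(N)$ (the latter since if $b_N^{-1}\le k$ then $b_N^{-1}\le k \le N$, but actually we need $o(N)$: note $b_N\ge$ something like $(\log k)/N\to$, wait — here is the subtle point, see below). So $\E[\sum_i G_i]=k+o(N)$, and then
\begin{equation}
\P\Big(\sum_{i=1}^k G_i \ge k+\gep N\Big)\le \frac{\E[\sum_i G_i]-k}{\gep N}\xrightarrow{N\to\infty}0
\end{equation}
by Markov applied to the nonnegative variable $\sum_i G_i - k$. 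To transfer this to the conditioned measure $\pi_{N,k}$, I would show $\P(E_N)=\P(\sum_i G_i\le N)\to 1$, which follows from the same first-moment bound (since $k+o(N)\le N$ eventually, as $k\le N/2$), and then $\pi_{N,k}(\,\cdot\,)\le \P(\cdot)/\P(E_N)$. This yields the first limit, and the second follows by the particle--hole symmetry described in the paper (swapping particles and empty sites reverses the bias), applied to the analogous spacing decomposition from the left end.

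The main obstacle I anticipate is controlling the sum $\sum_{i=1}^{k}\min\!\big(1,\frac{1}{i b_N}\big)$ and confirming it is $o(N)$ uniformly in the regime \eqref{largebias}: when $b_N$ is not too small this is bounded by $O(b_N^{-1}\log(k b_N))+O(b_N^{-1})=O(b_N^{-1}\log k)$, and \eqref{largebias} gives exactly $b_N^{-1}\log k = o(N)$; when $b_N\le 1/k$ the sum is $\le k\le N/2$ but we need strictly $o(N)$ — however in that sub-case $N b_N\le N/k$, and \eqref{largebias} forces $N b_N\to\infty$, so $k=o(N)$ and again the bound is $o(N)$. Tracking these cases carefully, and being careful with the geometric-variable conventions (whether $\chi_i\ge 1$ or the "distance" is $\chi_i-1$) so that $\E[G_i]-1$ has the right size, is the delicate bookkeeping; the probabilistic content is just Markov's inequality plus the product structure of the invariant measure.
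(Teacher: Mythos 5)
Your approach is essentially the one the paper uses: exploit the fact that under $\pi_{N,k}$ the particle spacings $\chi_i$ are independent geometrics with parameters $\gl^{-i}$ conditioned on $\sum_i\chi_i\le N$, bound $\E[\sum_i\chi_i]-k$ by $o(N)$, apply Markov's inequality, absorb the conditioning by noting the conditioning event has probability bounded below, and invoke particle--hole symmetry for $r_N$. The paper does exactly this, stating the bound $\sum_{i=1}^k\frac{1}{1-\gl^{-i}}\le k+Cb_N^{-1}\log\min(k,b_N^{-1})$ and noting that \eqref{largebias} makes the error term $o(N)$.

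There is, however, an arithmetic slip in your mean estimate. You write $\E[\sum_iG_i]=k+O\big(\sum_i\min(1,\frac{1}{ib_N})\big)$, but the implicit claim $\E[G_i]-1\lesssim\min\big(1,\frac{1}{ib_N}\big)$ is false for small $i$: exactly, $\E[G_i]-1=\frac{1}{\gl^i-1}$, which for $ib_N\lesssim 1$ behaves like $\frac{1}{2ib_N}$ and is therefore \emph{unbounded} (for $i=1$ it is $\sim\frac{1}{2b_N}$). In particular your sub-case resolution ``when $b_N\le 1/k$ the sum is $\le k$'' is wrong --- in that regime the sum is $\sim\frac{\log k}{2b_N}$, which exceeds $k$. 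The fix is that you do not need the cap at all: by convexity $\gl^i-1\ge i(\gl-1)$, so $\E[G_i]-1\le\frac{1}{i(\gl-1)}\lesssim\frac{1}{ib_N}$ holds for \emph{every} $i\ge1$, giving $\E[\sum_iG_i]-k\lesssim b_N^{-1}\log k$, which is $o(N)$ directly from \eqref{largebias} with no case split. With that repair, your argument coincides with the paper's proof.
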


\begin{proof}
By symmetry it is sufficient to prove the result for $\ell_N$ only, but for all $k\in\lint 1,N-1\rint$. Note that there is nothing to prove regarding 
$\ell_N$ if $\alpha:=\lim_{N\to \infty} k/N =1$, so we assume that $\alpha\in[0,1)$.\\
Let $(X_i)_{1\le i \le k}$ be independent geometric variables, with respective parameters $\gl^{-i}$. The sum of their means satisfies 
(recall that $\gl-1$ is of order $b_N$)
\begin{align*}
\sum_{i=1}^k \frac{1}{1-\gl^{-i}} = k+ \sum_{i=1}^k \frac{\gl^{-i}}{1-\gl^{-i}}\le k+ C b_N^{-1} \log \min( k, b_N^{-1})\;,
\end{align*}
for some constant $C>0$. The large bias assumption ensures that $b_N^{-1} \log \min( k_N, b_N^{-1})=o(N)$. Hence using the Markov inequality, we obtain that if $(X_i)_{1\le i \le k}$ is a sequence of such geometric variables, and if
\eqref{largebias} is satisfied, then for any $\gep>0$
\begin{equation*}
\P\Big(\sum_{i=1}^k X_i\ge k+\gep N\Big) \le \frac{\E\big[\big|\sum_{i=1}^k X_i - k\big|\big]}{\gep N}=\frac{\E\big[\sum_{i=1}^k X_i - k\big]}{\gep N}\;,
\end{equation*}
so that
$$ \lim_{N\to\infty}  \P\Big(\sum_{i=1}^k X_i\ge k+\gep N\Big) = 0\;.$$
The above inequality for $\gep<1-\alpha$ implies that $\P(\sum_{i=1}^k X_i\le N)\ge 1/2$ for all $N$ large enough meaning that the conditioning only changes the probability by a factor at most $2$.
Then, we can conclude by noticing that $\ell_N=\xi_1= N-\sum_{i=1}^k \chi_k$.
\end{proof}

\subsection{The equilibrium measure in the small-bias case}\label{Subsec:eqSmallBias}

We aim at showing that with large probability the density of particles everywhere is of order $k^{1+o(1)}/N$.
Given $\xi\in \gO^0_{N,k}$  we let $Q_1(\xi)$, resp. $Q_2(\xi)$, denote the largest gap between two consecutive particles, resp. between two consecutive empty sites.
\begin{equation}
 \begin{split}
Q_1(\xi)&:=\max\{ n\ge 1 \ : \ \exists i\in \lint 0,N-n\rint, \ \forall x\in \lint i+1,i+n\rint, \xi(x)=0 \},\\  
 Q_2(\xi)&:=\max\{ n\ge 1 \ : \ \exists i\in \lint 0,N-n\rint, \ \forall x\in \lint i+1,i+n\rint, \xi(x)=1 \},
 \end{split}
\end{equation}
and $Q(\xi)=\max(Q_1(\xi),Q_2(\xi))$.

\begin{proposition}\label{lem:dens}
 For all $x\in \lint 1,N\rint$, we have
 \begin{equation}\label{encadr}
 \frac{k}{N} \gl^{x-N} \le \pi_{N,k}(\xi(x) = 1) \le \frac{k}{N} \gl^{x-1}\;.
 \end{equation}
Furthermore, there exists a constant $c>0$ such that for all choices of $N\ge 1$, $u>1$ and $p_N\in (1/2,1]$ and all $k\le N/2$ 
\begin{equation}\label{splam}
 \pi_{N,k}\left(Q(\xi)\ge \frac{\gl^{N}  N u}{k} \right) \le  2k e^{-cu}\;.
\end{equation}
\end{proposition}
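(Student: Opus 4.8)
\textbf{Proof plan for Proposition \ref{lem:dens}.}

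For the two-sided bound \eqref{encadr}, the plan is to exploit the explicit weights \eqref{eq:weights}. Fix a site $x$ and a particle index $i$, and compare the partition function $Z_{N,k}$ restricted to configurations having the $i$-th leftmost particle at $x$ with the full $Z_{N,k}$. Shifting the $i$-th particle (and everything to its left) by one step changes $A(\xi)$ by exactly $\pm 1$ when the shift is legal, so $\pi_{N,k}(\xi_i = x)$ is, up to boundary obstructions, comparable across neighboring values of $x$ with ratio $\gl^{\pm 1}$. Summing over $i$ and telescoping between an arbitrary $x$ and the extreme positions gives $\pi_{N,k}(\xi(x)=1)$ sandwiched between $\gl^{x-N}$ and $\gl^{x-1}$ times the ``flat'' value $k/N$; the clean way to get the constant $k/N$ is to note $\sum_{x=1}^N \pi_{N,k}(\xi(x)=1) = k$ and combine with the monotonicity in $x$ of $\pi_{N,k}(\xi(x)=1)$ (which itself follows from the weights being increasing in particle positions). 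I would write this as a short stochastic-domination argument rather than a direct computation.

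For the large-gap bound \eqref{splam}, by the particle/empty-site symmetry (reversing the bias) it suffices to bound $\pi_{N,k}(Q_1(\xi)\ge m)$ with $m = \gl^N N u /k$. I would first do this for a single fixed window: for an interval $I = \lint i+1, i+n\rint$, the event that $I$ is particle-free is $\{\sum_{x\in I}\xi(x)=0\}$, and by a union bound over $I$ together with \eqref{encadr} one gets $\pi_{N,k}(Q_1\ge m)\le \sum_{\text{windows}} \pi_{N,k}(I \text{ empty})$. The subtlety is that ``$I$ empty'' is not just a product of single-site probabilities under $\pi_{N,k}$ because of the conditioning in \eqref{eq:weights}, so the cleaner route is to bound the probability that a given window of length $m$ starting at $i$ is empty directly: using the geometric representation, the event forces the relevant particle spacing $\chi_j$ (for the $j$ equal to the number of particles to the left of the window) to be at least $m$, and an unconditioned geometric with parameter $\gl^{-j}$ exceeds $m$ with probability $\gl^{-jm}\le \gl^{-m}$... but this is too weak when $j$ is small. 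Instead I would count: for a window $\lint i+1,i+m\rint$ to be empty, all $k$ particles lie in $\lint 1,N\rint\setminus \lint i+1,i+m\rint$, and writing the weight \eqref{eq:weights} one sees each such configuration is penalized by a factor at least $\gl^{-m}$ relative to the configuration obtained by sliding the block of particles sitting to the left of the window rightwards by $m$; summing, $\pi_{N,k}(\lint i+1,i+m\rint \text{ empty})\le \gl^{-m}$, and a union bound over the $\le N$ choices of $i$ and the symmetric bound for $Q_2$ gives $\pi_{N,k}(Q\ge m)\le 2N\gl^{-m}$. Finally I substitute $m = \gl^N N u/k$ and use $\log\gl = \log(p_N/q_N)\geq b_N$ together with $\gl^N \geq 1$ to see $m\log\gl \geq (Nu/k)\cdot\gl^N\cdot b_N$; bounding $N\gl^{-m}\le N e^{-cu}$ uniformly requires absorbing the $N$, which works because $\gl^N N u /k \cdot \log \gl$ grows at least like $u\log N$ after using $k\le N/2$ and $\gl^N\ge 1$ — more carefully, $N e^{-m\log\gl}\le 2k e^{-cu}$ follows once $m\log\gl\geq cu + \log(N/(2k))$, which holds with a suitable absolute $c$ since $m\log\gl \geq u$ crudely and one can always trade a constant factor.

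The main obstacle, and the step I would be most careful about, is getting the clean uniform constant $c$ and the clean prefactor $2k$ in \eqref{splam} across the full range $u>1$, $N\ge 1$, $p_N\in(1/2,1]$, $k\le N/2$ — in particular handling the degenerate regimes (small $u$, $\gl$ close to $1$, or $p_N=1$) where the naive Brownian/geometric heuristics break down. The honest approach is to prove the bound $\pi_{N,k}(Q\geq m)\leq C N \gl^{-m}$ with an absolute constant first, and only then optimize the substitution of $m$; the passage from $C N \gl^{-m}$ to $2k e^{-cu}$ is a matter of checking that $m\log\gl$ dominates $\log N$ with room to spare, which I would verify by splitting into the cases $k\gl^{-N}\geq 1$ and $k\gl^{-N}<1$ (equivalently, whether $m\geq N$ or not) rather than trying to do it in one line.
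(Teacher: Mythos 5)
Your plan for \eqref{encadr} is sound and essentially matches the paper: the paper's cleaner version uses the single-site swap map $\xi\mapsto\xi^y$ (exchange the contents of $y$ and $y+1$), which is a bijection from $\{\xi(y)=1\}$ to $\{\xi(y+1)=1\}$ and shifts the weight by a factor between $1$ and $\gl$; iterating and then normalizing via $\sum_y \pi_{N,k}(\xi(y)=1)=k$ together with monotonicity in $y$ gives \eqref{encadr} directly. Your particle-index shifting is a heavier version of the same idea, but the conclusion is the same.

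Your plan for \eqref{splam}, however, has a real gap, and it is not a matter of constants. Two things go wrong. First, the sliding map you propose (translate the block of particles to the \emph{left} of the window to the right by $m$) is the identity whenever the window has no particle to its left, so for such configurations the claimed penalty $\pi_{N,k}(\xi)\le\gl^{-m}\pi_{N,k}(T\xi)$ is vacuous. Concretely, with $N=4$, $k=1$, $\gl=2$ and the window $\lint 1,3\rint$, the single configuration with the particle at site $4$ has $\pi$-mass $8/15>\gl^{-3}=1/8$, so the intermediate lemma $\pi_{N,k}(\text{window empty})\le\gl^{-m}$ is simply false. Second, and more fundamentally, even if one could repair the bijection, the resulting exponent $m\log\gl$ is the \emph{wrong} exponent: in the regime where the proposition is actually used (the small-bias Section~\ref{Sec:UBSB}), $\log\gl$ can be $o(1/N)$ while $k$ is of order $N$, so $m\log\gl=\gl^N N u\,(\log\gl)/k$ stays bounded as $u$ grows and the substitution $m=\gl^N N u/k$ does not produce anything like $e^{-cu}$. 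Your ``crude'' claim that $m\log\gl\ge u$ fails in exactly this range. The point is that a long empty stretch is unlikely for two reasons --- the bias penalizes configurations with particles far to the left, \emph{and}, independently of any bias, $k$ particles spread over $N$ sites rarely leave an interval of length $\gg N/k$ empty --- and your argument only captures the first effect, which vanishes as $\gl\downarrow 1$. The paper's proof captures the density effect by running the swap argument \emph{conditionally} on the window being empty: it shows $\pi_{N,k}(\xi(x)=1\,|\,\forall y\in I,\ \xi(y)=0)\ge \tfrac{k}{N}\gl^{-N}$ for $x\notin I$, iterates over the sites of $I$ to get $\pi_{N,k}(\text{$I$ empty})\le\exp(-|I|\,\gl^{-N}k/N)$, and then union-bounds over disjoint windows $\lint mi+1,m(i+1)\rint$ (giving the prefactor $\lfloor N/m\rfloor\le 2k/(\gl^N u)\le 2k$, tighter than your $N$). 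Plugging $m=\gl^N N u/(2k)$ into this exponent yields exactly $e^{-u/2}$, uniformly in $\gl$, whereas your exponent degenerates to $0$ at $\gl=1$.
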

\begin{proof}
We set $A_x:= \{\xi(x) = 1\}$,
we first prove that for all $y\in \lint 1,N-1\rint$ we have
\begin{equation}\label{swiz}
  \pi_{N,k}(A_y) \le  \pi_{N,k}(A_{y+1}) \le  \gl \pi_{N,k}(A_y).
\end{equation}
We observe that the map $\xi \mapsto \xi^y$ defined in \eqref{flipz} induces a bijection from 
$A_y$ to $A_{y+1}$ and that for every  $\xi\in A^y$, 
\begin{equation}
\pi_{N,k}(\xi) \le \pi_{N,k}(\xi^y)\le \gl \pi_{N,k}(\xi).
\end{equation}
The reader can check indeed that $\pi_{N,k}(\xi^y)= \gl \pi_{N,k}(\xi)$ if  $\xi\in A_y \setminus A_{y+1}$ and that 
 $\xi^y=\xi$ if $\xi\in A_y \cap A_{y+1}$. The desired inequality is then obtained by summing over $\xi\in A_y$.

\medskip

By iterating \eqref{swiz} we obtain 
\begin{equation}
   \gl^{x-N}\pi_{N,k}(A_1) \le  \pi_{N,k}(A_{x}) \le  \gl^{x-1} \pi_{N,k}(A_1).
\end{equation}
By monotony of $\pi_{N,k}(A_y)$ in $y$ and the fact that there are $k$ particles
$$N\pi_{N,k}(A_1) \le \sum_{y=1}^N  \pi_{N,k}(A_y)=k \le N  \pi_{N,k}(A_N),$$ 
and thus \eqref{encadr} can be deduced.

\medskip

We pass to the proof of \eqref{splam}. We can perform the same reasoning as above but limiting ourselves 
to configurations with no particles in some set $I\subset \lint 1, N\rint$.
Setting $B_I:=\{ \forall y\in I, \xi(y)=0 \}$ we obtain similarly to \eqref{swiz} (exchanging directly the content of $x$ and $y$ instead of 
nearest neighbors) that for every  $x,y \in  \lint 1, N\rint  \setminus I$ with $x< y$
\begin{equation}
 \pi_{N,k}(A_x  \cap B_I )  \le \pi_{N,k}(A_y \cap B_I ) \le  \gl^{y-x} \pi_{N,k}(A_x  \cap B_I ).
\end{equation}
This allows to deduce that
\begin{equation}
 \pi_{N,k}(\xi(x)=1 \ | \ \forall y\in I, \xi(y)=0) \ge \frac{k}{N-|I|}\gl^{x-N},
\end{equation}
and yields by induction
 \begin{equation}
   \pi_{N,k}(\forall x\in I, \  \xi(x)=0)\le \left(1-\gl^{-N} \frac{k}{N}\right)^{|I|} \le \exp\left(-|I| \gl^{-N} \frac{k}{N}\right).
\end{equation}
Then noticing that $\{Q_1(\xi)\ge 2m\}$ implies that an interval of the type $\lint mi+1, m(i+1)\rint$ is empty,
a union bound yields that
\begin{equation}
  \pi_{N,k}(Q_1(\xi)\ge 2m)\le \left\lfloor \frac{N}{m}\right\rfloor \exp\left(-m\gl^{-N} \frac{k}{N}\right).
\end{equation}
This remains true for $Q_2(\xi)$ upon replacing $k$ by $N-k$, and this concludes the proof of \eqref{splam} if one choses $m=\frac{\gl^{N}  N u}{2k}$ .
\end{proof}

\subsection{Eigenfunctions and contractions}\label{sec:eigen}

The exact expression of the principal eigenfunction / eigenvalue has been derived in previous works~\cite{LevPer16,LabLac16}. 
It turns out that it can be obtained by applying a discrete Hopf-Cole transform to the generator of our Markov chain. Let us recall some
identities in that direction as they will be needed later on; the details can be found in~\cite[Section 3.3]{LabLac16}. We set
\begin{equation}\label{Eq:rho} \varrho := \big(\sqrt p - \sqrt q\big)^2 \sim \frac{b_N^2}{2}\;,\end{equation}
and we let $a_{N,k}$ be the unique solution of
\begin{equation*}
 \begin{cases}
 	(\sqrt{pq}\, \gD-\varrho)a(x)=0\;,\quad x\in \lint 1,N-1  \rint\;,\\
	 a(0)=1\;,\quad a(N)= \gl^{\frac{2k-N}{2}}  \;,
 \end{cases}
\end{equation*}
where  $\gD$ denotes the discrete Laplace operator
\begin{equation}\label{laplace}
 \gD(f)(x)=f(x+1)+f(x-1)-2f(x), \quad  x\in \lint 1,N-1  \rint.
\end{equation}
If $(h^\zeta_t,t\ge 0)$ denotes the height function process starting from some arbitrary initial condition $\zeta\in\Omega_{N,k}$, then the map
$$ V(t,x) := \E[\lambda^{\frac12 h^{\zeta}_t(x)} - a_{N,k}(x)]\;,\quad t\ge 0\;,\quad x\in \lint 0,N\rint\;,$$
solves
\begin{equation}\label{Eq:V}
\begin{cases}
	\partial_t V(t,x)= (\sqrt{pq}\,\gD- \varrho) V(t,x)\;,\quad x\in\lint 1,N-1  \rint\;.\\
	V(t,0) = V(t,N) = 0\;.
\end{cases}
\end{equation}
This allows to identify $N-1$ eigenvalues and eigenfunctions of the generator $\cL_{N,k}$ of the Markov chain: for every $j\in\{1,\ldots,N-1\}$, the map
\begin{equation}\label{eq:fj}
f^{(j)}_{N,k}(\zeta) = \sum_{x=1}^{N-1} \sin \left(\frac{j x\pi}{N}\right) \left( \frac{\lambda^{\frac12 \zeta(x)} - a_{N,k}(x)}{\lambda - 1} \right)\;,
\end{equation}
defines an eigenfunction with eigenvalue
$$ -\gamma_j = -\varrho - 4\sqrt{p_Nq_N} \sin\left( \frac{j \pi}{2N}\right)^2\;.$$
The eigenvalue $\gamma_1$ corresponds to the spectral gap of the generator (this is related to the fact that the corresponding eigenfunction is monotone,
see \cite[Section 3.3]{LabLac16} for more details),
and for this reason we adopt the notation
$$  \gap_N :=\gamma_1= \varrho + 4\sqrt{p_Nq_N} \sin\left( \frac{\pi}{2N}\right)^2.$$
We also set  $f_{N,k}:=f^{(1)}_{N,k}(\zeta)$ for the corresponding eigenfunction. Notice that this is a \textit{strictly} increasing function 
(recall \eqref{eq:strict}). 
An immediate useful consequence of the eigenvalue equation is that
\begin{equation}\label{eq:contract1}
 \bbE[f_{N,k}(h^{\zeta'}_t)-f_{N,k}(h^\zeta_t)]= e^{-\gap_N t} \left( f_{N,k}(\zeta')-f_{N,k}(\zeta) \right).
\end{equation}

\medskip

To close this section, let us introduce another function which is not an eigenfunction, but is also strictly increasing and enjoys a similar contraction property
$$f^{(0)}_{N,k}(\zeta):=\sum_{x=1}^{N-1} \frac{\gl^{\zeta(x)/2}-a_{N,k}(x)}{\gl-1}.$$
As a direct consequence of \eqref{Eq:V} at time zero, we have (using the notation introduced in \eqref{laplace})
\begin{multline}
 (\cL_{N,k}f^{(0)}_{N,k})(\zeta)= -\varrho f^{(0)}_{N,k}(\zeta)+\sqrt{pq}\sum_{x=1}^{N-1} \frac{\gD(\gl^{\zeta/2}-a_{N,k})(x)}{\gl-1}
 \\=-\varrho f^{(0)}_{N,k}(\zeta)-\frac{\sqrt{pq}}{\gl-1}\left[  \gl^{\frac{\zeta(N-1)}2}+\gl^{\frac{\zeta(1)}{2}}-a_{N,k}(N-1)-a_{N,k}(1)
  \right].
\end{multline}
In particular, we obtain for $\zeta \le \zeta'$
\begin{align*}
 &(\cL_{N,k}f^{(0)}_{N,k})(\zeta')- (\cL_{N,k}f^{(0)}_{N,k})(\zeta)\\&=
 -\varrho \left( f^{(0)}_{N,k}(\zeta')-f^{(0)}_{N,k}(\zeta) \right)- \frac{\sqrt{pq}}{\gl-1} \left[\gl^{\frac{\zeta'(N-1)}2}+\gl^{\frac{\zeta'(1)}2}- \gl^{\frac{\zeta(N-1)}2}-
 \gl^{\frac{\zeta(1)}2}\right]\\
&\le  -\varrho \left( f^{(0)}_{N,k}(\zeta')-f^{(0)}_{N,k}(\zeta) \right).
\end{align*}
Considering a monotone coupling between $(h^{\zeta'}_t)_{t\ge 0}$ and $(h^{\zeta}_t)_{t\ge 0}$, we obtain that 
\begin{multline}
 \partial_t  \bbE[f^{(0)}_{N,k}(h^{\zeta'}_t)-f^{(0)}_{N,k}(h^\zeta_t)]=
 \bbE[(\cL_{N,k}f^{(0)}_{N,k})(h_t^{\zeta'})- (\cL_{N,k}f^{(0)}_{N,k})(h_t^\zeta)]\\ \le   -\varrho \bbE[f^{(0)}_{N,k}(h^{\zeta'}_t)-f^{(0)}_{N,k}(h^\zeta_t)],
\end{multline}
and thus
\begin{equation}\label{eq:contract0}
 \bbE[f^{(0)}_{N,k}(h^{\zeta'}_t)-f^{(0)}_{N,k}(h^\zeta_t)]\le e^{-\varrho t} \left( f^{(0)}_{N,k}(\zeta')-f^{(0)}_{N,k}(\zeta) \right).
\end{equation}

\subsection{The hydrodynamic limit}

 We are interested in the macroscopic evolution of the height function. 

For $\alpha\in [0,1]$, we define $\vee_{\alpha} : [0,1] \to \bbR$, $\wedge_{\alpha} : [0,1] \to \bbR$ as 
\begin{equation*}
\vee_{\alpha}(x):= \max(-x,x-2(1-\alpha))\;,\qquad \wedge_{\alpha}(x):= \min(x,2\alpha-x)\;,
\end{equation*}
and we let $g_{\alpha}: \bbR_+\times [0,1]\to \bbR$ be defined as follows
\begin{equation*}\begin{split}
g^0_{\alpha}(t,x)&:= \begin{cases} \alpha-\frac{t}{2}-\frac{(x-\alpha)^2}{2t}, \quad &\text{ if } |x-\alpha| \le t,  \\
\wedge^{\alpha}(x), \quad & \text{ if } |x-\alpha| \ge t,
\end{cases}\\
g_{\alpha}(t,x)&:= \max(\vee_{\alpha}(x), g^0_{\alpha}(t,x)).
\end{split}
\end{equation*}

\begin{proposition}\label{prop:lidro}
Assume that $Nb_N = N(p_N-q_N) \to\infty$ and that $k_N/N \to \alpha \in (0,1)$. 
Then, after an appropriate space-time scaling,  $h^{\wedge}(\cdot,\cdot)$ converges to $g_\alpha$ 
in probability as $N\to\infty$. More precisely we have for any $\gep>0$, $T>0$,
\begin{equation}
 \lim_{N\to \infty} \bbP\left[ \sup_{t\le T}\sup_{x\in[0,1]}\left| \frac{1}{N} h\Big( \frac{Nt}{b_N}, Nx \Big)-g_{\alpha}(t,x) \right|\ge \gep \right]=0.
\end{equation}
\end{proposition}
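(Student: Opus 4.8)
The plan is to reduce the hydrodynamic statement to the behavior of a single tagged corner-growth/TASEP-type height function and then invoke (a segment-adapted version of) the known hydrodynamic limit for WASEP under the Euler scaling $t\mapsto Nt/b_N$. Since $Nb_N\to\infty$, the asymmetry, although vanishing, dominates the diffusive fluctuations on the macroscopic scale, so the viscosity term disappears in the limit and one is left with the inviscid Burgers equation (equivalently, a Hamilton--Jacobi equation for the height function) with zero-flux boundary conditions at $0$ and $N$. The candidate limit $g_\alpha$ is exactly the Hopf--Lax solution of $\partial_t g=\tfrac12(1-(\partial_x g)^2)$ (the Legendre transform giving the parabola $\alpha-\tfrac t2-\tfrac{(x-\alpha)^2}{2t}$ in the rarefaction region) started from the maximal profile $\wedge_\alpha$, truncated below by the obstacle $\vee_\alpha$ coming from the constraint that the height function stays in $\Omega_{N,k}$; so the first step is to verify that $g_\alpha$ as defined is indeed this entropy/obstacle solution and in particular is Lipschitz with $\|\partial_x g_\alpha\|_\infty\le 1$ and continuous in $t$ up to $t=0$.

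The second step is the genuine probabilistic input. I would invoke the hydrodynamic limit for the WASEP on $\mathbb Z$ (e.g.\ \cite{Demasi89, Gartner88, KipLan}) together with the microscopic-boundary arguments from \cite{LabbeKPZ, LabLac16}: after the scaling $h(Nt/b_N,Nx)/N$, the empirical height function converges in probability to the unique viscosity solution of the Hamilton--Jacobi equation with Neumann-type (zero-flux) boundary data. The standard route is the one-block/two-blocks estimate to close the equation, or, more economically on the segment, a direct comparison: since $b_N\to 0$ but $Nb_N\to\infty$, the generator after rescaling is $\tfrac1{b_N}\mathcal L_{N,k}$, whose action on smooth macroscopic test functions produces the inviscid flux to leading order plus an $O(1/(Nb_N))\to 0$ Laplacian correction. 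Tightness of the (deterministic-in-the-limit) height profiles is immediate from the uniform Lipschitz bound $|h(x)-h(y)|\le|x-y|$, so the only real content is identifying all subsequential limits as the entropy solution and checking uniqueness of that solution.

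For the upper bound $h\le g_\alpha+\gep$ I would use monotonicity of the grand coupling \eqref{eq:grand}: starting from $\wedge$ is the worst case from above, and one compares with the unconstrained process on a larger segment (or on $\mathbb Z$) whose hydrodynamic limit is the free parabola solution; the obstacle $\vee_\alpha$ only enters to keep the profile admissible. For the lower bound $h\ge g_\alpha-\gep$, the point is that particles genuinely do flow to the right at macroscopic speed of order $b_N\cdot N$ in the rescaled time, which is exactly what the Burgers characteristics encode; here one uses the explicit invariant-measure description \eqref{eq:weights} / Lemma \ref{lem:lbeq} to control the profile near the right boundary and attractivity to propagate the bound. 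The supremum over $t\le T$ and $x\in[0,1]$ is then upgraded from one-time-one-point convergence by the uniform Lipschitz control in $x$ and a standard equicontinuity-in-$t$ estimate (the discrete time-derivative of $h(Nt/b_N,\cdot)$ is controlled by the jump rates, which are $O(1)$), so a finite-net argument turns pointwise convergence in probability into uniform convergence in probability.

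The main obstacle I expect is the boundary behavior: handling the zero-flux condition at $x=0,1$ at the microscopic level and showing the limit is the obstacle/entropy solution truncated by $\vee_\alpha$ near the endpoints, rather than something that leaks mass through the boundary. This is precisely the subtle point already addressed in \cite{LabbeKPZ, LabLac16}, and I would lean on those arguments; the interior convergence and the reduction to a Hamilton--Jacobi equation are comparatively routine once the $O(1/(Nb_N))$ vanishing of the viscosity is noted. A secondary technical nuisance is that $\alpha\in(0,1)$ is assumed (so particle and hole densities are both bounded away from $0$ and $1$), which is what makes the one-block estimate and the boundary analysis clean; the degenerate endpoints $\alpha\in\{0,1\}$ are deliberately excluded here.
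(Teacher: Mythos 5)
Your proposal takes essentially the same route as the paper: the result is reduced to the hydrodynamic limit for WASEP on the segment with zero-flux boundary conditions, and the main substantive reference is~\cite{LabbeKPZ}. The paper's actual proof is a two-sentence citation to~\cite[Th.\ 1.3]{LabbeKPZ}, together with the remark that although that theorem is stated for $k=N/2$ and $p_N-q_N=N^{-a}$ with $a\in(0,1)$, inspection of the proof shows that the only thing really used is $N(p_N-q_N)\to\infty$, so the argument carries over \emph{mutatis mutandis} to the present assumptions; and that integrating the Burgers density profile started from the maximal initial condition produces precisely $g_\alpha$. Your longer outline (Hopf--Lax/obstacle identification of $g_\alpha$, the $O(1/(Nb_N))$ vanishing of the viscosity term, one-block/two-blocks, monotone comparison for upper and lower bounds, Lipschitz-plus-net upgrading to uniform convergence) is a plausible roadmap for reproving the hydrodynamic limit from scratch and is consistent with what~\cite{LabbeKPZ} does, but it is not needed here; the only genuinely new point the paper adds over the citation is the observation that the restrictive hypotheses in~\cite{LabbeKPZ} can be weakened to $Nb_N\to\infty$, which your remark on the viscosity correction gestures at but does not pin down. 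One minor caveat: your suggestion to use Lemma~\ref{lem:lbeq} to control the right boundary is off-target for this proposition (that lemma concerns equilibrium in the large-bias regime, not finite-time hydrodynamics), but since you ultimately defer to~\cite{LabbeKPZ,LabLac16} for the boundary analysis this does not create a real gap.
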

\begin{proof}
This is essentially the content of~\cite[Th 1.3]{LabbeKPZ} where the hydrodynamic limit of the density of particles 
is shown to be given by the inviscid Burgers' equation with zero-flux boundary conditions: when starting from the maximal initial condition, 
this yields (after integrating the density in space) the explicit solution $g_\alpha$.\\
Actually the setting of~\cite[Th 1.3]{LabbeKPZ} is more restrictive as the number of particles is taken to be
$k=N/2$ and $p_N-q_N = 1/N^{\alpha}$ with $\alpha \in (0,1)$. However, a careful inspection of the proof shows that we only require $N^{1-\alpha}$
to go to infinity: this corresponds to the assumption $N(p_N-q_N) \to \infty$ which is in force in the statement of the proposition 
so that the proof carries through \textit{mutatis mutandis}.

\end{proof}

\section{Lower bound on the mixing time for large biases}\label{Sec:LBLB}

In the large bias case, the last observable that equilibrates is the position of the leftmost particle.
Obtaining a lower bound on the mixing time is thus relatively simple: we have to show that for arbitrary $\delta>0$ at time 
$$s_\gd(N):=[(\sqrt{\alpha}+\sqrt{1-\alpha})^2-\delta]N b_N^{-1},$$ the leftmost particle has not reached its equilibrium position given by 
 Lemma \ref{lem:lbeq}. This is achieved by using the hydrodynamic limit for $\alpha>0$, and a simple comparison argument for $\alpha=0$.

 \begin{proposition}
 When \eqref{largebias} is satisfied,
  for every $\delta>0$ we have 
  \begin{equation}
 \lim_{N\to \infty} \left\| \bbP\left( \ell_N(\eta^{\wedge}_{s_{\delta}(N)}) \in \cdot \right)- \pi_N(\ell_N \in \cdot)\right\|_{TV}=1.
  \end{equation}
As a consequence for all $\gep>0$ and $N$ sufficiently large
$$\Tm^{N,k}(1-\gep)\ge s_{\delta}(N).$$ 
 \end{proposition}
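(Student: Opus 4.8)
The plan is to exhibit, at time $s_\delta(N)$ and starting from $\wedge$, a macroscopic event detected by $\ell_N$ which the equilibrium measure almost never produces. Fix a small $\gep>0$ (chosen below) and set $A_N:=\{\xi\in\gO_{N,k}^0 :\ \ell_N(\xi)\le (N-k)-\gep N\}$. By Lemma~\ref{lem:lbeq}, $\pi_{N,k}(A_N)\to 0$; recall that we may assume $k\le N/2$, hence $\alpha\le 1/2<1$, and also that $0<\delta<(\sqrt\alpha+\sqrt{1-\alpha})^2$, the statement being trivial otherwise. I will show $\bbP(\eta^{\wedge}_{s_\delta(N)}\in A_N)\to 1$; this gives $\|\bbP(\ell_N(\eta^{\wedge}_{s_\delta(N)})\in\cdot)-\pi_{N,k}(\ell_N\in\cdot)\|_{TV}\ge \bbP(\eta^{\wedge}_{s_\delta(N)}\in A_N)-\pi_{N,k}(A_N)\to 1$, which is the asserted convergence. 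For the consequence on the mixing time, total variation does not increase under the push-forward by $\ell_N$, so $d^{N,k}(s_\delta(N))\ge\|P^{N,k}_{s_\delta(N)}(\wedge,\cdot)-\pi_{N,k}\|_{TV}\ge\|\bbP(\ell_N(\eta^{\wedge}_{s_\delta(N)})\in\cdot)-\pi_{N,k}(\ell_N\in\cdot)\|_{TV}$; hence $d^{N,k}(s_\delta(N))>1-\gep$ for $N$ large, and since $t\mapsto d^{N,k}(t)$ is non-increasing, $\Tm^{N,k}(1-\gep)\ge s_\delta(N)$.

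When $\alpha\in(0,1)$ I would read off the position of the leftmost particle from the hydrodynamic limit. Put $t^\star:=(\sqrt\alpha+\sqrt{1-\alpha})^2-\delta$, so that $s_\delta(N)=t^\star N/b_N$ and Proposition~\ref{prop:lidro} applies: $\frac1N h^{\wedge}(s_\delta(N),\lfloor Ny\rfloor)\to g_\alpha(t^\star,y)$ in probability, for each $y\in[0,1]$. A short computation with the explicit formula for $g_\alpha$ shows that $t\mapsto g_\alpha(t,\cdot)$ becomes equal to the minimal profile $\vee_\alpha$ exactly at time $(\sqrt\alpha+\sqrt{1-\alpha})^2$; since $t^\star$ is strictly below this time, there exists $y=y(\alpha,\delta)\in(0,1-\alpha)$ with $g_\alpha(t^\star,y)>\vee_\alpha(y)=-y$ (for $t^\star\le\alpha$ one may take $y$ slightly below $\alpha$, for $\alpha<t^\star<(\sqrt\alpha+\sqrt{1-\alpha})^2$ one may take $y$ slightly above $(\sqrt{t^\star}-\sqrt\alpha)^2<1-\alpha$). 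Now the number of particles of $\eta^{\wedge}_{s_\delta(N)}$ in $\lint 1,\lfloor Ny\rfloor\rint$ equals $\frac12\big(h^{\wedge}(s_\delta(N),\lfloor Ny\rfloor)+\lfloor Ny\rfloor\big)$, which after division by $N$ converges in probability to $\frac12(g_\alpha(t^\star,y)+y)>0$; hence this number tends to $+\infty$ in probability, so it is $\ge 1$ with probability tending to $1$, i.e. $\ell_N(\eta^{\wedge}_{s_\delta(N)})\le Ny$. Choosing $\gep\in(0,1-\alpha-y)$ and using $k=\alpha N+o(N)$, we get $Ny<(N-k)-\gep N$ for $N$ large, hence $\bbP(\eta^{\wedge}_{s_\delta(N)}\in A_N)\to 1$.

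When $\alpha=0$ the hydrodynamic limit is not available, but here $N-k=N-o(N)$ is essentially the right endpoint, and it suffices to show $\ell_N(\eta^{\wedge}_{s_\delta(N)})\le(1-\tfrac\delta2)N$ with probability tending to $1$. The leftmost particle is never blocked on its left and is blocked on its right only when the adjacent site is occupied, so a routine monotone coupling in the graphical representation realizes $\ell_N(\eta^{\wedge}_t)$ below the position at time $t$ of a single continuous-time random walk with rates $p$ (right), $q$ (left), reflected at $1$ and started at $1$; by a standard computation the latter sits at $1+b_N t+O_{\bbP}(\sqrt t)$. At $t=s_\delta(N)=(1-\delta)Nb_N^{-1}$ this equals $(1-\delta)N+o_{\bbP}(N)$, because \eqref{largebias} forces $Nb_N\to\infty$, hence $\sqrt{s_\delta(N)}=O(\sqrt{N/b_N})=o(N)$. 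Thus $\ell_N(\eta^{\wedge}_{s_\delta(N)})\le(1-\tfrac\delta2)N<(N-k)-\tfrac\delta4 N$ for $N$ large (as $k=o(N)$), giving $\bbP(\eta^{\wedge}_{s_\delta(N)}\in A_N)\to 1$ with $\gep=\delta/4$.

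The only point requiring a little care is, in the case $\alpha>0$, the passage from Proposition~\ref{prop:lidro} — which controls the height profile only up to an $o(N)$ error — to the existence of an actual particle to the left of $(1-\alpha)N$. This works precisely because, for $t^\star$ strictly below the fixation time, $g_\alpha(t^\star,\cdot)$ lies strictly above $\vee_\alpha$ on a whole subinterval of $(0,1-\alpha)$, so the surplus of up-steps it forces is of order $N$ and comfortably survives the $o(N)$ fluctuation; identifying this interval, equivalently checking that the hydrodynamic profile fixates exactly at time $(\sqrt\alpha+\sqrt{1-\alpha})^2$, is the elementary computation used above, and everything else is soft.
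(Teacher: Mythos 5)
Your proposal is correct and takes essentially the same route as the paper's: compare $\eta^{\wedge}$ at time $s_\delta(N)$ against the equilibrium position of the leftmost particle furnished by Lemma~\ref{lem:lbeq}, using the hydrodynamic limit (Proposition~\ref{prop:lidro}) when $\alpha>0$ and domination of $\ell_N$ by a free asymmetric random walk when $\alpha=0$. You fill in a few details the paper leaves implicit (the explicit choice of $y$ where $g_\alpha(t^\star,\cdot)>\vee_\alpha$, the translation from height profile to particle count, and the pushforward/TV inequality), but the decomposition, the key lemmas invoked, and the structure of the argument are the same.
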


\subsection{The case $\alpha=0$}
Given $\gd>0$ we want to prove that the system is not mixed at time $s_\gd(N):=(1-\gd)N b_N^{-1}$.
We know from Lemma \ref{lem:lbeq}, that when $\alpha=0$ and \eqref{largebias} holds, at equilibrium we have
$$\lim_{N\to \infty}\pi_{N,k}(\ell_N\le (1-\gd/2)N)=0.$$
On the other hand  
observing that the position of the first particle is dominated by a random walk on $\bbN$ with 
jump rates $p_N$ to the right and $q_N$ to the left,
it is standard to check that whenever $\lim_{N\to \infty} b_N N=\infty$ 
\begin{equation}
  \lim_{N\to \infty} \bbP\left( \ell_N(\eta^{\wedge}_{s_\delta})\le (1-\gd/2)N \right)=1.
\end{equation}
\qed

\subsection{The case $\alpha\in (0,1/2]$}

Setting $x_\delta:= 1-\alpha-c_\alpha \delta$, for some positive constant $c_\alpha$ sufficiently small, we observe that
the hydrodynamic profile at the rescaled time corresponding to $s_\delta$  is above the minimum at $x_{\delta}$
$$g_{\alpha}(x_{\delta}, (\sqrt{\alpha}+\sqrt{1-\alpha})^2-\delta)>\vee_{\alpha}(x_{\delta}).$$
The reader can check that $c_{\alpha}=1/3$ works for all $\alpha\in(0,1/2)$.
Thus whenever $\lim_{N\to \infty} b_N N=\infty$
Proposition \ref{prop:lidro} yields that 
\begin{equation}
\lim_{N\to \infty} \bbP\left(  \ell_N(\eta^{\wedge}_{s_\delta}) \le  (1-\alpha-c_{\alpha}\delta)N \right)=1.
\end{equation}
On the other hand we know from Lemma \ref{lem:lbeq} that when \eqref{largebias} holds, at equilibrium we have for any $\delta>0$
$$\lim_{N\to \infty}\pi_{N,k}(\ell_N\le (1-\alpha-c_{\alpha}\delta)N)=0.$$

\qed

\section{Upper bound on the mixing time for large biases}\label{Sec:UBLB}

Let us recall how a grand coupling satisfying the order preservation property \eqref{eq:grand} is of help to establish an upper bound on the mixing time.
Recalling \eqref{tvdis}, we have by the triangle inequality
\begin{equation}\label{eq:zone}
d^{N,k}(t)\le  \max_{\zeta,\zeta' \in \gO_{N,k}}\|P^{N,k}_{t}(\zeta',\cdot)-P^{N,k}_{t}(\zeta,\cdot)\|_{TV}.
\end{equation}
On the other hand if $\bbP$ is a monotone grand coupling, one observes that the extremal initial conditions are the last to couple so that one has 
\begin{equation}\label{eq:ztwo}
 \|P^{N,k}_{t}(\zeta',\cdot)-P^{N,k}_{t}(\zeta,\cdot)\|_{TV}\le \bbP[ h^{\zeta}_t\ne h^{\zeta'}_t] \le \bbP[ h^{\vee}_t\ne h^{\wedge}_t].
\end{equation}
Hence to establish an upper bound on the mixing time, it suffices to obtain a good control on the merging time 
\begin{equation}\label{eq:mergin}
 \tau:=\inf\{ t>0 \ : \ h^{\vee}_t= h^{\wedge}_t\}.
\end{equation}
Let us set for this section 
\begin{equation}\label{deltaz}
t_\gd(N):=[(\sqrt{\alpha}+\sqrt{1-\alpha})^2+\delta]N b_N^{-1}.
\end{equation}
\begin{proposition}\label{lapropo}
When \eqref{largebias} is satisfied,
for every $\delta>0$, and any monotone grand coupling we have 
\begin{equation}
 \lim_{N\to \infty} \bbP\left( \tau \le t_{\delta}(N)\right)=1.
\end{equation}
As a consequence for all $\gep>0$ and $N$ sufficiently large $\Tm^{N,k}(\gep)\le t_{\delta}(N)$.
 \end{proposition}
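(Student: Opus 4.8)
The plan is to bound the merging time $\tau$ of \eqref{eq:mergin} and to conclude through \eqref{eq:zone}--\eqref{eq:ztwo}. I would split $[0,t_\delta(N)]$ into a \emph{macroscopic stage} $[0,t_1]$, with $t_1:=[(\sqrt\alpha+\sqrt{1-\alpha})^2+\tfrac{\delta}{2}]Nb_N^{-1}$, and a short \emph{final stage} $[t_1,t_\delta(N)]$ of length $t_2:=\tfrac{\delta}{2}Nb_N^{-1}$, and show that with probability tending to $1$ one already has $h^\vee_{t_\delta(N)}=h^\wedge_{t_\delta(N)}$.

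\emph{Macroscopic stage.} First I would invoke the hydrodynamic limit (Proposition \ref{prop:lidro}) for $h^\wedge$ — replaced, in the degenerate cases $\alpha\in\{0,1\}$ where it is unavailable, by an elementary stochastic comparison of the extremal particles and holes with biased nearest-neighbour walks. Since $g_\alpha(t,\cdot)=\vee_\alpha$ for every $t\ge(\sqrt\alpha+\sqrt{1-\alpha})^2$, this gives, for any fixed $\gep>0$ and with probability tending to $1$, the bound $\sup_{t_1\le t\le t_\delta(N)}\sup_x\big(h^\wedge_t(x)-\vee(x)\big)\le\gep N$; together with \eqref{eq:grand} and $h^\vee_t\ge\vee$ this traps \emph{both} processes in the thin tube $\{\vee\le\,\cdot\,\le\vee+\gep N\}$ throughout $[t_1,t_\delta(N)]$.

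\emph{Confining the discrepancies.} Hydrodynamics controls only the \emph{size} of $h^\wedge_t-h^\vee_t$, not its \emph{support}: a priori there could be of order $\gep N$ particles of $\xi^\wedge$ lagging in the macroscopic empty zone $[1,N-k]$, or of order $\gep N$ holes of $\xi^\wedge$ scattered inside the near-full bulk $[N-k,N]$, and such discrepancies would sit too far from the front $x=N-k$ to be absorbed within the remaining time $t_2$. The core step is therefore to show that, with probability tending to $1$, $h^\wedge_{t_1}$ already coincides with $\vee$ on $[1,N-k-\gep N]\cup[N-k+\gep N,N]$. At sites where the eigenfunction weight is not exponentially small this follows from the contraction estimates \eqref{eq:contract1}--\eqref{eq:contract0}: a single discrepancy at $x$ forces $f^{(0)}_{N,k}(h^\wedge_{t_1})-f^{(0)}_{N,k}(h^\vee_{t_1})\ge\gl^{\vee(x)/2}$, whereas $\E[f^{(0)}_{N,k}(h^\wedge_{t_1})-f^{(0)}_{N,k}(h^\vee_{t_1})]=e^{-\varrho t_1}\big(f^{(0)}_{N,k}(\wedge)-f^{(0)}_{N,k}(\vee)\big)$ by \eqref{eq:contract0}, and using \eqref{Eq:rho}, \eqref{eq:taylorgap} and $\log\big(f^{(0)}_{N,k}(\wedge)-f^{(0)}_{N,k}(\vee)\big)=kb_N+o(Nb_N)$ one checks that this expectation is $\le e^{-cNb_N}$ for some $c=c(\delta)>0$; Markov's inequality then kills, at time $t_1$, all discrepancies at sites $x$ with $\gl^{\vee(x)/2}$ not exponentially small, i.e.\ everywhere outside a neighbourhood of the valley $x=N-k$. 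To push this up to distance $\gep N$ of the front, where the weight $\gl^{\vee(x)/2}$ is of order $e^{-(1-\alpha)Nb_N}$ and the previous argument is too lossy, I would instead use the drift quantitatively: a particle of $\xi^\wedge$ still present in the essentially empty zone $[1,N-k-\gep N]$ at time $t_1$ would have had to resist the bias over a time of order $Nb_N^{-1}$, an event of probability $e^{-\Theta(Nb_N)}$ by a large-deviation bound for the biased walk, which a union bound over the $k$ particles defeats; a symmetric argument with holes handles $[N-k+\gep N,N]$. I expect this interplay — global contraction away from the valley, a sharp use of the bias near the front, and monotonicity to glue the two — to be the main obstacle.

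\emph{Final stage.} Granted that at time $t_1$ all discrepancies lie in the window $W:=\lint N-k-\gep N,\,N-k+\gep N\rint$ and that both processes stay in the tube, I would localise the analysis to $W$ (possibly via the specific grand coupling constructed in Appendix \ref{Appendix:Coupling}): on $W$ the dynamics is a weakly asymmetric exclusion process on $O(\gep N)$ sites whose spectral gap is still $\sim\varrho\sim b_N^2/2$ by \eqref{eq:taylorgap}, and for which, after recentring the height function, the relevant strictly increasing eigenfunction and its minimal increment \eqref{eq:minincr} differ only by a multiplicative factor $e^{O(\gep Nb_N)}$. Applying \eqref{eq:contract1} (or \eqref{eq:contract0}) on $W$ then forces $h^\wedge=h^\vee$ on $W$, hence everywhere, within an additional time $O(\gep Nb_N^{-1})$, which is $\le t_2$ once $\gep$ has been chosen small enough relative to $\delta$. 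This yields $\P[\tau\le t_\delta(N)]\to1$ and, through \eqref{eq:zone}--\eqref{eq:ztwo}, the bound $\Tm^{N,k}(\gep)\le t_\delta(N)$ for all $N$ large.
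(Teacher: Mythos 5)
Your high-level decomposition (macroscopic equilibration, then a final contraction stage) and, crucially, your identification of the central difficulty — hydrodynamics controls the \emph{size} of $h^\wedge_t-h^\vee_t$ but not its \emph{support}, so one must control the positions of the leftmost particle $L_N$ and rightmost hole $R_N$ — do match the structure of the paper's proof, where this step is Proposition~\ref{lerimo}. But there are genuine gaps in both the middle and final stages.

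The most serious is the proposed proof of the confinement step. You argue that a particle of $\xi^\wedge$ lagging in $[1,N-k-\gep N]$ at time $t_1$ ``would have had to resist the bias'' and assign this probability $e^{-\Theta(Nb_N)}$ by ``a large-deviation bound for the biased walk.'' This comparison fails in the needed direction: an exclusion particle is stochastically \emph{dominated above} by a free biased walk (the exclusion rule only blocks rightward moves), so a free-walk LDP gives no lower bound whatsoever on its position. In fact, starting from $\wedge$ with $\alpha>0$, the leftmost particle is initially completely blocked and its position follows the rarefaction-fan profile $\ell_\alpha(t)$, which is $0$ for all $t\le\alpha$ — far slower than the free-walk speed. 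The paper overcomes this by coupling with a carefully designed auxiliary system on $\bbZ$ (Section~\ref{sec:auxi}): $n$ exclusion particles plus a deliberately \emph{slowed} rightmost particle whose presence creates an explicit stationary law for the spacings (Lemma~\ref{station}); Proposition~\ref{deviats} then yields a quantitative lower bound on $\hat\eta_1$ which transfers to $L_N$ by monotonicity. That construction is the real content here and is not a cosmetic step. (The contraction estimate you invoke in the same paragraph also does not give an $e^{-cNb_N}$ bound uniformly in $\alpha$: for $\alpha\ne 1/2$ the expectation $e^{-\varrho t_1}\bigl(f^{(0)}_{N,k}(\wedge)-f^{(0)}_{N,k}(\vee)\bigr)$ is in general \emph{not} small compared with $\gl^{\vee(x)/2}$ for $x$ near $N-k$; this is precisely why the direct spectral argument of Section~\ref{specz} is confined to $\alpha=1/2$.)

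The final stage also has a gap. You propose to ``localise the analysis to $W$,'' but the set of sites where $h^\wedge_t$ and $h^\vee_t$ differ is not invariant: a discrepancy present in $W$ at time $t_1$ can propagate outside $W$ afterwards (the coupled dynamics near $\partial W$ is not autonomous), so there is no well-defined finite exclusion process on $W$ to which a gap estimate would apply. The paper avoids localisation altogether: once Proposition~\ref{lerimo} gives $D_N(h^\wedge_{t_0})\le\delta N/20$, Lemma~\ref{lem:difrence} bounds the \emph{global} normalised weighted area by $Nk\gl^{\delta N/20}$, and the global contraction \eqref{eq:contract0} does the rest. Finally, when $b_N$ is close to $(\log N)/N$ (admissible under \eqref{largebias} only if $k=N^{o(1)}$) the contraction alone does not drive the area below $1$; the paper adds a diffusion estimate for the non-negative supermartingale of unit jump size (Proposition~\ref{prop:solskjaer}, Section~\ref{preuv2}). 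Your proposal has no analogue of this step.
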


When $\alpha=1/2$, a sharp estimate on $\tau$ can be obtained directly from spectral considerations (Section \ref{specz}), but when $\alpha\in [0,1/2)$
we need a refinement of the strategy used in \cite{LabLac16}: The first step (Proposition \ref{lerimo}) is to 
obtain a control on the position of the leftmost particle which matches the lower bound provided 
by the hydrodynamic limit. This requires a new proof since the argument used in \cite{LabLac16} is not sharp enough to cover all biases.
The second step is to use contractive functions once the system is at macroscopic equilibrium, this is sufficient to treat most cases.
A third and new step is required to treat the case when the  bias $b_N$ of order $\log N/ N$ or smaller: as we are working under the assumption 
\eqref{largebias} we only need to treat this case when $k=N^{o(1)}$. In this third step we use diffusive estimates to control the hitting time of zero for the function
$f^{(0)}_{N,k}(h^{\wedge}_t)-f^{(0)}_{N,k}(h^{\vee}_t)$ where $f^{(0)}$ was introduced in Subsection \ref{sec:eigen}.

\subsection{The special case $\alpha=1/2$}\label{specz}

In the special case $\alpha=1/2$, a sharp upper-bound can be deduced in a rather direct fashion from spectral estimates repeating the computation performed 
in \cite[Section 3.2]{Wil04} for the symmetric case. This fact is itself a bit surprising since this method does not yield the correct upper bound in the symetric case
nor in the constant bias case. 

\medskip

Recall the definition of $f_{N,k}$ in Equation \eqref{eq:fj} and below. It being a strictly monotone function and $\bbP$ being a monotone coupling,
we obtain using Markov's inequality (recall \eqref{eq:minincr})
\begin{equation}\label{eq:mark}
 \bbP(\tau>t)= 
 \bbP\left[ f_{N,k}(h^{\wedge}_t) > f_{N,k}(h^{\vee}_t)\right] \le
 \frac{\bbE\left[ f_{N,k}(h^{\wedge}_t) - f_{N,k}(h^{\vee}_t)\right]}{\delta_{\min}(f_{N,k})}.
 \end{equation}
The expectation decays exponentially with rate $\gap_N$ \eqref{eq:contract1} and it is not difficult to check that 
 \begin{equation}\label{delmin}
\delta_{\min}(f_{N,k})\ge  \gl^{(k-N)/2} \sin\left(\frac{\pi}{N}\right)\ge N^{-1}\gl^{(k-N)/2}.
\end{equation}
Hence Equation \eqref{eq:mark} becomes
\begin{equation}\label{eq:zthree}
  \bbP(\tau>t)
  \le  N \gl^{\frac{N-k}{2}} e^{-\gap_N t}\left( f_{N,k}(\wedge)-f_{N,k}(\vee) \right)
  \le  \frac{N^2 \gl^{N/2}}{\gl-1}e^{-\gap_N t},
\end{equation}
where in the last inequality we used that 
$$\sin\left(\frac{x\pi}{N}\right)\left(\gl^{\wedge(x)/2}-\gl^{\vee(x)/2}\right)\le \gl^{\wedge(x)/2}\le \gl^{k/2}.$$

\medskip

Recall that we assume that \eqref{largebias} holds and $b_N$ tends to zero.
Recalling  \eqref{eq:taylorgap}, we obtain the following asymptotic equivalent 
\begin{equation}\label{eq:lezequiv}
\gap_N \stackrel{N\to \infty}{\sim} b_N^2/2 \quad \text{ and } \quad  \log \gl_N \stackrel{N\to \infty}{\sim} 2b_N
\end{equation}
Furthermore for $N$ sufficiently large we have 
$(\gl-1)^{-1}\le N$.
Hence recalling that 
 $t_\delta=(2+\delta)b^{-1}_N N$ and using \eqref{eq:lezequiv} in \eqref{eq:zthree} we obtain for all $N$ sufficiently large
\begin{equation}
 \bbP(\tau>t_\delta)\le 
  \frac{N^2}{\gl-1} \exp\left(\frac{N}{2}\log \gl -\gap_N t_{\gd} \right)
 \le N^3 e^{-\frac{\delta}{4}b_N N}.
\end{equation}
and the left-hand side vanishes when $N$ tends to infinity as a consequence of \eqref{largebias} (recall that as $\alpha=1/2$, $k$ is of order $N$)
\qed

\begin{remark}
The reason why the computation above yields a sharp upper bound is because: (A) The difference of order between
$\delta_{\min}(f_{N,k})$ and the typical fluctuation of $f_{N,k}$ at equilibrium is negligible in the computation. 
(B) Until shortly before the mixing time the quantity $\log \left[f_{N,k}(h^{\wedge}_t) - f_{N,k}(h^{\vee}_t)\right]$ 
has the same order of magnitude as  $\log \bbE\left[f_{N,k}(h^{\wedge}_t) - f_{N,k}(h^{\vee}_t)\right].$
In the case of symmetric exclusion, $(A)$ does not hold, while when the bias is constant, (B) fails to hold.
In the weakly asymmetric case, when $\alpha\ne 1/2$, the reader can check by combining Propositions \ref{prop:lidro}
and \ref{lerimo}
that (B) holds until time $4\alpha$ (in the macroscopic time-scale) after which $g_{\alpha}(t,\cdot)$ 
stops to display a local maximum in the interval $(l_{\alpha}(t),r_{\alpha}(t))$ and $f_{N,k}(h^{\wedge}_t) - f_{N,k}(h^{\vee}_t)$ starts to decay much faster than its average.
\end{remark}

\subsection{The case $\alpha \ne1/2$: scaling limit for the boundary processes}

In order to obtain a sharp upper-bound for $\alpha \ne 1/2$, we rely on a scaling limit result in order to control the value of 
$f_{N,k}(h^{\wedge}_t)-f_{N,k}(h^{\vee}_t)$ up to a time close to the mixing time, and then we use the contractive estimate \eqref{eq:contract1} to couple $h^{\wedge}_t$ with $h^{\vee}_t$. Note that 
Proposition \ref{prop:lidro} is not sufficient to estimate $f_{N,k}(h^{\wedge}_t)$: we also need a control on the positions  
of the left-most particle and right-most empty site in our particle configuration.

In the case when $\alpha=0$ and the bias is of order $\log N/N$ or smaller (this is possible when \eqref{largebias} 
is satisfied and $k$ grows slower than any power of $N$), we need an additional step, based on diffusion estimates, to couple the two processes.
In this last case also, the factor $N^{-1}$ in \eqref{delmin} causes some difficulty. For that reason
we use $f^{(0)}_{N,k}$ and \eqref{eq:contract0} instead of $f_{N,k}$ and \eqref{eq:contract1}: observe that
$\delta_{\min}(f^{(0)}_{N,k})=\gl^{\frac{k-N}{2}}$.

\medskip

\noindent Let us define $[L_N(t),R_N(t)]$ to be the interval on which $h^{\wedge}_t$ and $\vee$ differ. More explicitly, we set
\begin{equation}\begin{split}\label{def:lknrkn}
L_N(t)&:=\max\{x \ : \ h^{\wedge}(t,x)=-x\},\\
R_N(t)&:=\min\{x \ : \ h^{\wedge}(t,x)= x-2(N-k)\},
\end{split}
\end{equation}
or equivalently $L_N(t):=\ell_N(\eta^{\wedge}_t)-1$ and  $R_N(t):=r_N(\eta^{\wedge}_t)$.

We let $\ell_\alpha$ and $r_{\alpha}$ denote the most likely candidates for the scaling limits of $L_N$ and $R_N$ that can be inferred 
from the hydrodynamic behavior of the system (cf. Proposition \ref{prop:lidro}):
\begin{equation*}\begin{split}
\ell_\alpha(t)&=\begin{cases} 0 \quad &\text{ if } t\le \alpha\;,\\ 
                      (\sqrt{t}-\sqrt{\alpha})^2 \quad &\text{ if } t\in \left(\alpha, (\sqrt{\alpha}+\sqrt{1-\alpha})^2 \right)\;,\\
                      1-\alpha \quad &\text{ if } t\ge   (\sqrt{\alpha}+\sqrt{1-\alpha})^2\;,
\end{cases}
 \\
r_{\alpha}(t)&=\begin{cases} 1 \quad &\text{ if } t\le 1-\alpha\;,\\ 
                      1-(\sqrt{t}-\sqrt{1-\alpha})^2 \quad &\text{ if } t\in \left(1-\alpha, (\sqrt{\alpha}+\sqrt{1-\alpha})^2 \right)\;,\\
                      1-\alpha \quad &\text{ if } t\ge   (\sqrt{\alpha}+\sqrt{1-\alpha})^2\;.
\end{cases}\end{split}
 \end{equation*}
We prove that $\ell_{\alpha}$ and $r_{\alpha}$ are indeed the scaling limits of $L_N$ and $R_N$.
\begin{proposition}\label{lerimo}
 If \eqref{largebias} holds and $k_N/N\to \alpha$ then for every $t>0$ we have the following convergences in probability
   \begin{equation}
  \lim_{N\to \infty}  \frac{1}{N} L_N\left( b^{-1}_NNt  \right)= \ell_\alpha(t),\qquad\lim_{N\to \infty}  \frac{1}{N} R_N\left( b_N^{-1} Nt  \right)= r_\alpha(t).
  \end{equation}

\end{proposition}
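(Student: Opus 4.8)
The plan is to prove the two convergences separately and to focus on $L_N$ since $R_N$ follows by the symmetry exchanging particles with empty sites (which also exchanges $\alpha$ with $1-\alpha$). The hydrodynamic statement Proposition~\ref{prop:lidro} already pins down $h^{\wedge}$ up to $o(N)$ in the bulk on the time scale $b_N^{-1}N$; the content of this proposition is the sharper statement that the interface between the region where $h^{\wedge}$ sticks to $\vee$ and the region where it has detached occurs precisely at the macroscopic point $\ell_\alpha(t)$ (resp. $r_\alpha(t)$), with only $o(N)$ error. So the proof splits into an \emph{upper bound} $L_N(b_N^{-1}Nt)\le (\ell_\alpha(t)+\gep)N$ and a \emph{lower bound} $L_N(b_N^{-1}Nt)\ge(\ell_\alpha(t)-\gep)N$, each holding with probability tending to $1$.

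For the upper bound on $L_N$, I would argue by monotonicity and comparison with a simpler one-particle dynamics. The position $\ell_N(\eta^{\wedge}_s)$ of the leftmost particle, started from $\wedge$ (all particles packed on the left), is stochastically dominated by the position of a single random walk with rates $p_N$ to the right, $q_N$ to the left, started at the appropriate location: indeed, removing all other particles only frees up the leftmost one to move right faster, and via the grand coupling of Section~\ref{Sec:Prelim} one can make this domination pathwise. A second-moment / concentration estimate for a biased random walk run for time $b_N^{-1}Nt$ (drift $b_N$, so displacement $\approx Nt$, fluctuations $O(\sqrt{b_N^{-1}Nt})=o(N)$ since $b_N^{-1}=o(N)$ under \eqref{largebias}) shows this walk is at position $(1-\alpha)t + o(N)$... but this naive bound is not sharp because the leftmost particle is slowed down by the wall of particles in front of it. The correct upper bound must instead come from comparing $h^{\wedge}_s$ with the hydrodynamic profile from above: one shows that if $L_N(s)$ were much larger than $\ell_\alpha(t)N$, then $h^{\wedge}_s$ would lie strictly below $g_\alpha$ on a macroscopic region, contradicting the one-sided hydrodynamic control. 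Concretely, since $h^{\wedge}_s(x)=-x$ for $x\le L_N(s)$ and $g_\alpha(t,\cdot)>\vee_\alpha$ strictly on $(\ell_\alpha(t),r_\alpha(t))$ whenever $t<(\sqrt\alpha+\sqrt{1-\alpha})^2$, Proposition~\ref{prop:lidro} forces $L_N(b_N^{-1}Nt)/N\le \ell_\alpha(t)+\gep$ with high probability.

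For the lower bound on $L_N$ one needs to show the leftmost particle has \emph{not} advanced past $\ell_\alpha(t)N$. Here $g_\alpha(t,\ell_\alpha(t))=\vee_\alpha(\ell_\alpha(t))$ exactly, so the hydrodynamic limit alone is not decisive and one must work at the scale of fluctuations near the left front. The strategy I would use is a censoring/monotonicity comparison: restrict the dynamics to the segment $\lint 1, mN\rint$ for a suitable $m<1$ with $\ell_\alpha(t)<m$, and observe that the leftmost particle in the full system is dominated from below by the leftmost particle in a system on $\lint 1,mN\rint$ with the same number of particles (censoring jumps across $mN$ only delays leftward — wait, it delays the front) — more carefully, one uses that the number of particles that can have reached $\lint 1,\ell_\alpha(t)N\rint$ by time $b_N^{-1}Nt$ is controlled, via the hydrodynamic current, by the integrated flux, which near the front is subcritical. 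Equivalently, one couples with a free (no-exclusion) system of $k$ independent biased walks started from the packed configuration: a given particle, initially at distance $i$ from the left end, reaches site $\ell_\alpha(t)N$ only if its free walk has drifted left by more than its starting cushion, an event of exponentially small probability when summed appropriately, so no particle crosses $\ell_\alpha(t)N - \gep N$, giving $L_N\ge (\ell_\alpha(t)-\gep)N$.

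The main obstacle is the lower bound at the front, i.e. the second mechanism above. The hydrodynamic limit is \emph{exactly} marginal there ($g_\alpha=\vee_\alpha$ at $x=\ell_\alpha(t)$), so any argument must be quantitative at the $o(N)$ scale and cannot be a black-box application of Proposition~\ref{prop:lidro}; in particular one must handle the regime where $b_N$ is as small as is permitted by \eqref{largebias} (possibly of order $\log k/N$ with $k=N^{o(1)}$), where diffusive fluctuations of individual particles are genuinely of polynomial-in-$N$ size and the Brownian approximation for a biased walk must be pushed to large-deviation precision. I expect the cleanest route is the free-particle domination: since exclusion only slows particles down relative to a labelled free system in the sense that the $i$-th leftmost particle in the interacting system is $\ge$ the $i$-th leftmost among $k$ ordered independent walks (a standard consequence of the grand coupling), it suffices to estimate $\P(\text{$i$-th free walk} < \ell_\alpha(t)N)$ and union-bound over $i$, with the starting positions $i$ of the free walks being $1,2,\dots,k$ and the drift $b_N$ doing the rest. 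This reduces everything to a one-dimensional random walk large-deviation estimate, uniform in the range of $b_N$ allowed by \eqref{largebias}, which is the technical heart of the argument.
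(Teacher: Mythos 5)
Your decomposition into upper and lower bounds with the lower bound being the hard direction is correct, and the upper bound is essentially what the paper does: for $\alpha=0$ the paper bounds the leftmost particle by a single biased walk, and for $\alpha\in(0,1)$ the upper bound on $L_N$ is indeed a direct consequence of Proposition~\ref{prop:lidro}. But the lower bound, which you correctly identify as the technical heart, rests on a stochastic domination that is false, and this is a genuine gap.

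You claim that, under the grand coupling, the $i$-th leftmost particle of the exclusion process dominates from above the $i$-th order statistic of $k$ independent biased walks started from the same packed configuration. Exclusion with drift to the right \emph{slows} particles down relative to free walks (the traffic jam in front of particle $i$ blocks its rightward jumps but never its leftward ones), so the domination you want runs in the wrong direction: the correct and standard comparison gives $\xi_i(t)\le \max_{j\le i} Y_j(t)$, an upper bound on $\xi_i$, not a lower bound. A two-particle example already shows your claim fails: with $\xi=(1,2)$, a ring of particle $1$'s rightward clock is blocked in exclusion but not in the free system, producing $\xi_1=1<2=Y^{(1)}$. More tellingly, if your domination held it would prove too much: the minimum of $k$ independent walks with drift $b_N$ started at $1,\dots,k$ is, at time $b_N^{-1}Nt$, at $Nt-o(N)$ under \eqref{largebias} (the deviation $\sqrt{b_N^{-1}Nt\log k}$ is $o(N)$ exactly because $b_N^{-1}\log k=o(N)$), so you would conclude $L_N(b_N^{-1}Nt)/N\ge t-o(1)$. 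For $\alpha>0$ and $t>\alpha$ this exceeds $\ell_\alpha(t)=(\sqrt t-\sqrt\alpha)^2<t$ and directly contradicts the upper bound you proved via Proposition~\ref{prop:lidro}. So the free-system comparison cannot close the argument.

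What the paper actually does for the lower bound is introduce an auxiliary process $\hat\eta$ on the infinite line (Section~\ref{sec:auxi}): $n$ exclusion particles together with an extra $(n+1)$-th particle on the right that jumps only rightward at the slower rate $\gb(p-q)$, $\gb<1$. The slow particle caps the speed of the pack from the front, and the crucial point (Lemma~\ref{station}) is that the particle spacings in $\hat\eta$ admit an explicit product-geometric stationary distribution; this gives tight control on $\hat\eta_{n+1}(t)-\hat\eta_1(t)$ at all times and hence the lower bound of Proposition~\ref{deviats}. One then couples $\hat\eta$ with $\eta^\wedge$ so that $\eta^\wedge_i\ge\hat\eta_i$ up to the hitting time when the slow particle meets $\eta^\wedge_{n+1}$; for $\alpha>0$ one takes $n=\lceil\delta N\rceil$ (not $k$) and uses the hydrodynamic limit precisely to guarantee that this hitting time is $\gtrsim b_N^{-1}Ns$. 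The stationary-spacings idea is the missing ingredient your proposal lacks and is what makes the argument work uniformly over the whole regime \eqref{largebias}.
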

\begin{remark}
 The assumption \eqref{largebias} is optimal for the above result to hold. To see this, the reader can check that  when  \eqref{largebias} fails, 
at equilibrium $\ell_N$ and $r_N$ are typically at a macroscopic distance from $(1-\alpha)N$.
 \end{remark}

The proof of Proposition \ref{lerimo} is presented in the next subsections. Let us now check that it yields the right bound on mixing time. First, notice that the inequalities \eqref{eq:mark} still hold with $f_{N,k}$ replaced by $f_{N,k}^{(0)}$ since the latter is also a strictly increasing function in the sense of \eqref{eq:strict}. Next observe that Proposition \ref{lerimo} allows an acute control on the quantity
$$ \frac{f^{(0)}_{N,k}(h^{\wedge}_t)-f^{(0)}_{N,k}(h^{\vee}_t)}{\delta_{\min} (f^{(0)}_{N,k})}\;.$$
We summarize the argument in a lemma.

\begin{lemma}\label{lem:difrence}
 Set $D_N(\zeta):= \max\left(|L_N(\zeta)-N+k|, |R_N(\zeta)-N+k|\right)$. We have for $\zeta' \ge \zeta$
 \begin{equation}
 \frac{f^{(0)}_{N,k}(\zeta')-f^{(0)}_{N,k}(\zeta)}{\delta_{\min} (f^{(0)}_{N,k})}\le Nk \gl^{D_N(\zeta')}.
 \end{equation}
 \end{lemma}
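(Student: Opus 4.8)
The plan is to bound $f^{(0)}_{N,k}(\zeta')-f^{(0)}_{N,k}(\zeta)$ termwise from above, using that $\zeta'\ge\zeta$ forces the two height functions to agree outside a controlled region. First I would recall the definition
$$f^{(0)}_{N,k}(\zeta)=\sum_{x=1}^{N-1}\frac{\gl^{\zeta(x)/2}-a_{N,k}(x)}{\gl-1},$$
so that the $a_{N,k}$ terms cancel in the difference and we are left with
$$f^{(0)}_{N,k}(\zeta')-f^{(0)}_{N,k}(\zeta)=\frac{1}{\gl-1}\sum_{x=1}^{N-1}\left(\gl^{\zeta'(x)/2}-\gl^{\zeta(x)/2}\right).$$
Each summand is nonnegative since $\zeta\le\zeta'$. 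The key geometric observation is that any height function $\zeta$ with $\zeta'\ge\zeta\ge\vee$ coincides with $\vee$ outside the interval where $\zeta'$ differs from $\vee$, i.e.\ outside $[L_N(\zeta'),R_N(\zeta')]$; moreover every height function lies between $\vee$ and $\wedge$, and on the relevant interval the gap $\wedge(x)-\vee(x)$ is at most $2\min(x,N-x)$ which near $x\approx N-k$ is of order $\min(k,N-k)\le k$. More crudely, on $[L_N(\zeta'),R_N(\zeta')]$ one has $\zeta'(x)/2\le \wedge(x)/2\le k/2 + $ (distance from $N-k$), so that $\gl^{\zeta'(x)/2}-\gl^{\zeta(x)/2}\le \gl^{\zeta'(x)/2}\le \gl^{k/2+D_N(\zeta')}$ where $D_N(\zeta')$ measures how far the endpoints $L_N,R_N$ stray from $N-k$.

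Next I would count the number of nonzero summands: it is at most $R_N(\zeta')-L_N(\zeta')$, which is bounded by $2D_N(\zeta')+ (\text{length of }[\vee\ne\wedge]\text{ region})$; since $R_N-L_N$ never exceeds $N$, a trivial bound gives at most $N$ nonzero terms. Combining, the difference is at most $N\cdot\gl^{k/2+D_N(\zeta')}/(\gl-1)$. On the other hand, $\delta_{\min}(f^{(0)}_{N,k})=\gl^{(k-N)/2}$ as noted just above the lemma statement (a single corner flip at a position maximizing the increment changes exactly one term $\gl^{\zeta(x)/2}$ by a factor $\gl^{\pm 1/2}$, and the minimal such increment is realized near the right boundary where $\zeta(x)\approx k-N$, giving $(\gl^{1/2}-\gl^{-1/2})\gl^{(k-N)/2}/(\gl-1)=\gl^{(k-N)/2}$). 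Dividing,
$$\frac{f^{(0)}_{N,k}(\zeta')-f^{(0)}_{N,k}(\zeta)}{\delta_{\min}(f^{(0)}_{N,k})}\le \frac{N}{\gl-1}\gl^{k/2+D_N(\zeta')}\gl^{(N-k)/2}=\frac{N}{\gl-1}\gl^{N/2+D_N(\zeta')}.$$
This is not quite the claimed bound $Nk\gl^{D_N(\zeta')}$, so the calibration of exponents must be done more carefully: one should measure $\zeta'(x)$ not against the global maximum $k$ but against the \emph{minimal} height profile $\vee$, exploiting that on the difference interval $\zeta'(x)-\vee(x)\le 2(x-L_N(\zeta'))\wedge 2(R_N(\zeta')-x)$, and that $\vee(N-1)=k-N+1$. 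Writing $\zeta'(x)/2=(\zeta'(x)-\vee(x))/2+\vee(x)/2$ and summing the geometric series $\sum \gl^{(\zeta'(x)-\vee(x))/2}$ over the interval yields a factor controlled by $(\gl-1)^{-1}\gl^{D_N(\zeta')}$ times $\gl^{\vee(\text{endpoint})/2}$, and the endpoint value of $\vee$ is $(k-N)/2+D_N(\zeta')+O(1)$; the two $\gl^{(k-N)/2}$ factors then cancel against $\delta_{\min}$, leaving a polynomial prefactor that one checks is at most $Nk$.

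The main obstacle is precisely this bookkeeping of exponents: naively bounding $\gl^{\zeta'(x)/2}$ by $\gl^{k/2}$ loses a factor $\gl^{(N-k)/2}$ which is catastrophic (exponentially large in $N$ when $b_N\gg 1/N$), so the proof genuinely needs the observation that $\zeta'$ is \emph{pinned} to $\vee$ up to the small excursion of size $D_N(\zeta')$, and that the profile $\vee$ decreases linearly so its exponential weight $\gl^{\vee(x)/2}$ is summable with a geometric tail of ratio $\gl^{-1/2}$, contributing the $(\gl-1)^{-1}$ that is then absorbed. The factor $Nk$ rather than $N^2$ in the statement presumably comes from a sharper count of nonzero terms (at most $2D_N+k$ of them, together with $k\le N$) combined with one power of $N$ from the sum over $j$-independent positions; I would produce the $Nk$ by bounding the number of relevant $x$ by $R_N(\zeta')-L_N(\zeta')$ and noting this is $O(k+D_N(\zeta'))$, then splitting into the two regimes $D_N\lesssim k$ and $D_N\gtrsim k$, with the polynomial slack safely below $Nk$ in both.
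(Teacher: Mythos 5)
Your proposal correctly identifies the central insight of the lemma --- that the naive bound $\zeta'(x)\le k$ is catastrophically lossy (by a factor $\gl^{(N-k)/2}$), and that instead one must exploit the pinning of $\zeta'$ to $\vee$ outside the interval $[L_N(\zeta'),R_N(\zeta')]$ together with the $1$-Lipschitz property to get the uniform bound $\zeta'(x)\le k-N+2D_N(\zeta')$, which cancels against $\delta_{\min}(f^{(0)}_{N,k})=\gl^{(k-N)/2}$. That part is exactly right and is the heart of the lemma.

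However, your proposed bookkeeping does not close, and it differs from the paper's in a way that matters. You propose to extract the $(\gl-1)^{-1}$ factor by summing a geometric series over \emph{positions} $x$ (with ratio $\gl^{\pm 1/2}$ coming from the linear growth of $\zeta'-\vee$), and then assert that ``the polynomial prefactor that one checks is at most $Nk$''. This last step is a genuine gap: the geometric-series-in-$x$ approach produces an extra $(\gl-1)^{-1}$, and since $\gl-1\asymp b_N$ can be as small as $(\log k)/N$ in the large-bias regime, the prefactor you get is $\asymp (\gl-1)^{-2}\asymp N^2/(\log k)^2$, which is not obviously $\le Nk$ (e.g.\ when $k$ is polylogarithmic in $N$). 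Your tentative count of ``at most $2D_N+k$ nonzero terms'' also does not survive scrutiny: the region $(L_N(\zeta'),R_N(\zeta'))$ has length at most $2D_N(\zeta')$.

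The paper avoids this issue with a cleaner trick you did not use: the telescoping identity over \emph{height levels}
\begin{equation*}
\frac{\gl^{\frac12\zeta'(x)}-\gl^{\frac12\zeta(x)}}{\gl-1}
=\sum_{n=0}^{\frac{\zeta'(x)-\zeta(x)}{2}-1}\gl^{\frac12\zeta(x)+n}
\le\gl^{\frac12\zeta'(x)}\,\frac{\zeta'(x)-\zeta(x)}{2}\;,
\end{equation*}
which absorbs the $(\gl-1)^{-1}$ into a purely \emph{linear} factor $\frac12(\zeta'(x)-\zeta(x))$ rather than a geometric one. After inserting $\zeta'(x)\le k-N+2D_N(\zeta')$ and dividing by $\delta_{\min}$, the exponential part reduces to $\gl^{D_N(\zeta')}$, and what remains is $\sum_x\frac12(\zeta'(x)-\zeta(x))\le Nk$, obtained from the crude but sufficient bounds $\zeta'(x)-\zeta(x)\le 2k$ and ``there are at most $N$ terms''. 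No further case analysis or geometric series in $x$ is needed. You should replace your second attempt with this linearization step; the rest of your reasoning then carries through as in the paper.
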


 \begin{proof}
We assume that $\zeta' \ne \zeta$. Then,
  \begin{equation}\label{linex}
\frac{\lambda^{\frac12 \zeta'(x)} - \lambda^{\frac12 \zeta(x)}}{\gl-1} = \sum_{n=0}^{\frac{\zeta'(x)-\zeta(x)}{2}-1} \lambda^{\frac12\zeta(x)+n}\\
\le \lambda^{\frac{\zeta'(x)}{2}} \frac{\left(\zeta'(x)-\zeta(x)\right)}{2}.
\end{equation}
Now for $x\le L_N(\zeta')$ or $x\ge R_N(\zeta')$ we necessarily have  $\zeta(x)=\zeta'(x)=\vee(x)$.
For $x\in \lint  L_N(\zeta')+1,R_N(\zeta')-1\rint$, the fact that $\zeta'$ is $1$-Lipschitz yields 
\begin{equation}
 \zeta'(x)\le k-N+2 D_N(\zeta').
\end{equation}
Recall that $\delta_{\min} (f^{(0)}_{N,k}) = \gl^{(k-N)/2}$. Hence one obtains from \eqref{linex}
\begin{align*}
\frac{f^{(0)}_{N,k}(\zeta')-f^{(0)}_{N,k}(\zeta)}{\delta_{\min} (f^{(0)}_{N,k})}
&\le 
  \sum_{x=1}^N \lambda^{\frac{\zeta'(x)-(k-N)}{2}} \frac{\left(\zeta'(x)-\zeta(x)\right)}{2} \\ &\le 
  \lambda^{D_N(\zeta')} \sum_{x=1}^N \frac{\left(\zeta'(x)-\zeta(x)\right)}{2}\le   \lambda^{D_N(\zeta')} Nk.
\end{align*}
In the last inequality we simply used that  $\zeta'(x)-\zeta(x)\le 2k$ (there are at most $k$ sites where the increment of $\zeta'$
is larger than that of $\zeta$).

 \end{proof}

 We can now apply Proposition \ref{lerimo} to obtain an estimate on the mixing time.
 For convenience we treat the case of smaller bias separately.

\subsubsection{Proof of Proposition \ref{lapropo} when $b_N\gg (\log N)/ N$ }\label{preuv1}
We assume that
\begin{equation}\label{hypo}
\lim_{N\to \infty}(b_N N)/\log N=\infty.
\end{equation}
We consider first the system at time $t_0(N):= (\sqrt{\alpha}+\sqrt{1-\alpha})^2 N b^{-1}_N$.
From Proposition \ref{lerimo}, we know that at time $t_0$, $L_N$ and $R_N$ are close to their equilibrium positions:
we have for $N$ sufficiently large and arbitrary $\delta, \gep>0$
\begin{multline}\label{eq:lax}
 \bbP[  L_N(t_0)\ge N-k-(\delta/20) N \ ; \  R_N(t_0)\le N-k+(\delta/20) N  ]\\
 =: \bbP[\cA_N] \ge 1-(\gep/2).
\end{multline}
We let $\cF_t$ denote the canonical filtration associated with the process.
For $t\ge t_0$,  repeating \eqref{eq:mark} starting at time $t_0$ for $f^{(0)}_{N,k}$ 
and combining it with \eqref{eq:contract0}, we obtain that 
\begin{equation}\label{eq:lux}
    \bbP[ \tau>t \ | \ \cF_{t_0}]\le e^{-\varrho(t-t_0)}\frac{ f^{(0)}_{N,k}(h^{\wedge}_{t_0})-f^{(0)}_{N,k}(h^{\vee}_{t_0})}{\delta_{\min}(f^{(0)}_{N,k})}\;.
\end{equation}

Note that on the event $\cA_N$, we have $D_N(h^{\wedge}_{t_0})\le \delta N/20$.
Thus using Lemma \ref{lem:difrence} to bound the r.h.s.\ we obtain 
\begin{equation}\label{eq:lox}
 \bbE\left[\frac{ f^{(0)}_{N,k}(h^{\wedge}_{t_0})-f^{(0)}_{N,k}(h^{\vee}_{t_0})}{\delta_{\min}(f^{(0)}_{N,k})} \ \Big| \ \cA_N\right] \le kN\gl^{\frac{\delta N}{20}}.
\end{equation}
Hence averaging \eqref{eq:lux} on the event $\cA_N$ one obtains
we obtain
\begin{equation}
      \bbP( \tau>t )\le   \gep/2+  \bbP[ \tau>t \ | \ \cA_N]\le  \gep/2 + e^{-\varrho(t-t_0)}kN\gl^{\frac{\delta N}{20}}.
\end{equation}
For $t=t_\delta=t_0+  \delta b_N^{-1} N$,
replacing $\varrho$ and $\log \gl$ by their equivalents given in \eqref{Eq:rho} and \eqref{eq:lezequiv}, one can check that for $N$ sufficiently large one has
  \begin{equation}\label{oazt}
 N k \gl^{\frac{\delta N}{20}} e^{-\varrho (t_{\delta}-t_0)} \le N k  e^{-\frac{\delta N b_N}{20}}\le \gep/2,
  \end{equation}
  where the last inequality is valid for $N$ sufficiently large provided that \eqref{hypo} holds.

\qed

\subsubsection{Proof of Proposition \ref{lapropo}: the general case}\label{preuv2}

If we no longer assume that \eqref{hypo} holds, then an additional step is needed in order to conclude: this step relies on diffusion estimates proved in Appendix \ref{sec:diffu}.
From \eqref{eq:lox} and \eqref{eq:contract0}, for any $\gep, \gd >0$ we have  for $N$ sufficiently large (recall \eqref{deltaz})
\begin{multline}
 \bbE\left[\frac{ f^{(0)}_{N,k}(h^\wedge_{t_{\delta/2}}) - f^{(0)}_{N,k}(h^\vee_{t_{\delta/2}})}{\delta_{\min}(f^{(0)}_{N,k})} \ \Big| \ \cA_N \right]\\ \le   
 e^{-\varrho(t_{\delta/2}-t_0)}\gl^{ \frac{\delta N}{20}} k N \le
e^{-\frac{\delta N b_N}{40}} k N\le e^{-\frac{\delta N b_N}{50}} N,
\end{multline}
where the second inequality relies on the the asymptotic equivalence in \eqref{eq:lezequiv} and the last one on \eqref{largebias}.\\
Now we can conclude using Proposition \ref{prop:solskjaer}-(i) with $a:= 4\gep^{-1} e^{-\delta N b_N/50} N$ and
$$ (M_{s})_{s\ge 0}:= \left(\frac{f^{(0)}_{N,k}(h^\wedge_{t_{\delta/2}+s})-f^{(0)}_{N,k}(h^\vee_{t_{\delta/2}+s})}{\delta_{\min}(f^{(0)}_{N,k})}\right)_{s\ge 0}.$$
Indeed $M_{s}$ is a non-negative supermartingale whose jumps are of size at least $1$ (recall that we have divided the weighted area by $\delta_{\min}(f^{(0)}_{N,k})$ in the definition of $M$). Furthermore, up to the merging time $\tau$, the two interfaces $h^\wedge$ and $h_\vee$ differ on some interval: on this interval $h^\wedge$ makes an upward corner ($\Delta h^\wedge < 0$) and $h^\vee$ makes a downward corner ($\Delta h^\vee > 0$). Consequently, the jump rate of $M$ is at least $1$ up to its hitting time of $0$. From Markov's inequality we have (recall \eqref{eq:lax}) 
\begin{equation}
\bbP[M_0 > a]\le \bbP[\cA^{\cc}_N]+ a^{-1} \bbE[M_0 \ | \ \cA_N ] \le 3\gep/4.
\end{equation}
Setting $r_{\delta}:=  (\delta/2) N b^{-1}_N$ and applying \eqref{lopes}, we have for all $N$ sufficiently large
\begin{equation}
\bbP[M_{r_{\delta}}>0 \ | \ M_0\le a ]\le 4 a (r_\delta)^{-1/2}\le  \frac{16 \gep^{-1}}{\sqrt{\delta/2}} \sqrt{N b_N} e^{-\frac{\delta N b_N}{50}}\le \gep/4\;,
\end{equation}
where the last inequality comes from the fact that $Nb_N$ diverges. Hence we conclude by observing that for $N$ sufficiently large
\begin{equation}
\bbP[\tau>t_{\delta}] =\bbP[M_{r_{\delta}}>0] \le \bbP[M_0 > a]+\bbP[M_{r_{\delta}}>0 \ | \ M_0\le a ]\le \gep.
 \end{equation}
 \qed

\subsection{An auxiliary model to control the speed of the right-most particle}\label{sec:auxi}
 
Our strategy to prove Proposition \ref{lerimo} is to compare our particle system with another one on the infinite line, 
for which a stationary probability exists.
We consider $n$ particles performing the exclusion process on the infinite line with jump rate $p$ and $q$ 
and we add a ``slower'' $n+1$-th particle on the right to enforce existence of a stationary probability for the particle spacings.
To make the system more tractable this extra particle is only allowed to jump to the right (so that it does not feel the influence of the $n$ others).
Note that in our application, the number of particles $n$ does not necessarily coincide with $k$.\\
The techniques developed in this section present some similarities to those used for the constant bias case in
\cite[Section 6]{LabLac16}, but also present several improvements, the main conceptual change being the addition of a slow particle instead of modifying the biases
in the process.
This novelty presents two advantages: Firstly it considerably simplifies the computation
since martingale concentration estimates are not needed any more. Secondly this allows to obtain control for the whole large bias regime \eqref{largebias},
something that cannot be achieved even by optimizing all the parameters involved in \cite[Section 6]{LabLac16}.

 \medskip

\noindent More formally we consider a Markov process $(\hat \eta(t))_{t\ge 0}$ on the state space
$$\Theta_n:= \{ \xi\in \bbZ^{n+1}  \ : \ \xi_1<\xi_2<\dots<\xi_{n+1}\}.$$
The coordinate $\hat \eta_i(t)$ denotes the position of the $i$-th leftmost particle at time $t$.
The dynamics is defined as follows: the first $n$ particles, $\hat \eta_{i}(t)$, $i\in \lint 1,n\rint$
perform an exclusion dynamics with jump rates $p$ to the right and $q$ to the left while the last one 
$\hat \eta_{n+1}(t)$ can only jump to the right and does so with rate $\gb b=\gb(p-q)$, for some $\beta<1$.

We assume furthermore that initially we have $\hat \eta_{n+1}(0)=0$.
The initial position of the other particles is chosen to be random in the following manner.
We define 
\begin{equation}\label{ladef}
 \mu_i:=\gb+\gl^{-i}(1-\gb).
\end{equation}
and we assume that the spacings $\left( \hat \eta_{i+1}(0)-\hat \eta_{i}(0) \right)_{i=1}^n$ 
are independent with Geometric distribution
\begin{equation}\label{geom}
\bbP \left[ \hat \eta_{i+1}(0)-\hat \eta_i(0)= m \right]=(1-\mu_i)\mu_i^{m-1}\;,\quad m\ge 1\;.
\end{equation}

Our aim is to prove the following control on the position of the first particle in this system, uniformly in $\gb$ and $n$.
In Subsections \ref{Subsec:alpha0} and \ref{Subsec:alphaNon0}, we use this result in order to control the position of $L_N(t)$. 

\begin{proposition}\label{deviats}
We have, 
 \begin{multline}\label{eq:woof}
\lim_{A\to \infty}\sup_{t\ge 1, n\in \bbN, \gb\in(0,1)} 
\bbP\bigg[\hat \eta_1(t) \le t\gb b\\
- A \left( \sqrt{bt}+ \frac{1}{1-\gb}\left[n+ b^{-1} \log \min(n,b^{-1}) \right] \right) \bigg]= 0.
 \end{multline}
\end{proposition}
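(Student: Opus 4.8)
The plan is to first establish that the auxiliary process has a tractable stationary distribution, and then to use a concentration-free argument based on that stationarity together with a comparison with a single biased random walk. The key observation is that, because the rightmost particle $\hat\eta_{n+1}$ only jumps to the right with rate $\gb b$ and never interacts with the others, the \emph{spacings} $\left(\hat\eta_{i+1}(t)-\hat\eta_i(t)\right)_{i=1}^n$ form an autonomous Markov chain (a zero-range-type dynamics), and one checks directly that the product of geometrics with parameters $\mu_i$ defined in \eqref{ladef}--\eqref{geom} is reversible (or at least stationary) for it: the parameter $\mu_i = \gb + \gl^{-i}(1-\gb)$ is precisely the one making the flux of spacing-$i$ balanced, since particle $i$ feels drift $b$ to the right while the effective ``wall'' on its right moves with speed $\gb b$. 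Hence, if the spacings start from \eqref{geom}, they remain distributed that way for all $t\ge 0$, and in particular $\hat\eta_{n+1}(t)-\hat\eta_1(t) = \sum_{i=1}^n \bigl(\hat\eta_{i+1}(t)-\hat\eta_i(t)\bigr)$ is, for every fixed $t$, a sum of independent geometrics with means $\frac{1}{1-\mu_i} = \frac{1}{(1-\gb)(1-\gl^{-i})}$.

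Next I would control $\hat\eta_{n+1}(t)$ itself: it is a pure birth process (Poisson) of rate $\gb b$ started at $0$, so $\hat\eta_{n+1}(t) = t\gb b + O(\sqrt{\gb b t})$ with high probability, and in any case $\hat\eta_{n+1}(t) \ge t\gb b - A\sqrt{bt}$ with probability tending to $1$ as $A\to\infty$, uniformly in $n,\gb,t\ge 1$ (Chebyshev on the Poisson variable, using $\gb<1$). Combining this with the stationarity of the spacings, we get
\begin{equation*}
\hat\eta_1(t) = \hat\eta_{n+1}(t) - \sum_{i=1}^n \bigl(\hat\eta_{i+1}(t)-\hat\eta_i(t)\bigr) \ge t\gb b - A\sqrt{bt} - \sum_{i=1}^n X_i,
\end{equation*}
where $(X_i)$ are independent geometrics of mean $\frac{1}{(1-\gb)(1-\gl^{-i})}$. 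It remains to bound $\sum_{i=1}^n X_i$ from above by the second term in \eqref{eq:woof}, again with probability $\to 1$ as $A\to\infty$ uniformly in the parameters. For this I would use Markov's inequality on $\sum_i X_i$ after centering: $\E\bigl[\sum_i X_i\bigr] = \frac{1}{1-\gb}\sum_{i=1}^n \frac{1}{1-\gl^{-i}} \le \frac{1}{1-\gb}\bigl(n + C b^{-1}\log\min(n,b^{-1})\bigr)$, exactly as in the bound used in the proof of Lemma \ref{lem:lbeq} (since $\sum_i \frac{\gl^{-i}}{1-\gl^{-i}} \le C b^{-1}\log\min(n,b^{-1})$ because $\gl-1 \asymp b$). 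Thus $\P\bigl[\sum_i X_i \ge \frac{A}{1-\gb}(n + b^{-1}\log\min(n,b^{-1}))\bigr] \le 1/A$ once $A$ is large enough that $A$ exceeds the constant $C$, and this is uniform in $t,n,\gb$.

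Putting the two deviation bounds together and taking $A\to\infty$ yields \eqref{eq:woof}. The main obstacle, and the place where care is genuinely needed, is verifying that the geometric product measure \eqref{geom} is \emph{exactly} stationary for the spacing dynamics of the auxiliary model — one must check the detailed balance / stationarity equations at the boundary spacing $i=n$ (adjacent to the slow particle) and at $i=1$, and confirm that the slow particle's rate $\gb b$ is what forces the choice $\mu_i = \gb + \gl^{-i}(1-\gb)$; a sign error here would propagate everywhere. Everything else is a routine first-moment computation borrowed from the proof of Lemma \ref{lem:lbeq} plus an elementary estimate on a Poisson process.
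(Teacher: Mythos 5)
Your proposal reproduces the paper's own proof essentially verbatim: you establish that the product of geometrics is stationary for the autonomous spacing dynamics (the paper's Lemma~\ref{station}), and then combine via a union bound a Chebyshev estimate on the Poisson variable $\hat\eta_{n+1}(t)$ with a Markov estimate on the stationary sum $\hat\eta_{n+1}(t)-\hat\eta_1(t)$, exactly as the paper does. One small caution when you fill in the details of the stationarity check: the product geometric measure is \emph{not} reversible for the spacing chain (bulk detailed balance would force $\mu_{i-1}/\mu_i = \lambda$, which fails once $\gb>0$), so you really do need to verify the full global balance equations as in the paper's Lemma~\ref{station} rather than appeal to detailed balance.
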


The statement is not hard to prove, the key point is to observe that the distribution of particle spacings is 
stationary.

 \begin{lemma}\label{station}
For all $t\ge 0$, $\left( \hat \eta_{i+1}(t)-\hat \eta_{i}(t) \right)_{i=1}^n$ are independent r.v.~with distribution 
given by \eqref{geom}.
\end{lemma}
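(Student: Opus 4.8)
The plan is to prove Lemma \ref{station} by showing that the product-geometric measure on spacings (with parameters $\mu_i$) is \emph{stationary} for the spacing process, and then invoke the fact that it is also the initial law. The key is that the spacing vector $\big(\hat\eta_{i+1}(t)-\hat\eta_i(t)\big)_{i=1}^n$ is itself a Markov process on $\bbN^n$: the evolution of spacings does not require knowledge of the absolute positions. Indeed a jump of particle $i$ to the right (rate $p$ for $i\le n$, rate $\gb b$ for $i=n+1$) decreases $\chi_{i-1}:=\hat\eta_i-\hat\eta_{i-1}$ by $1$ and increases $\chi_i$ by $1$, and is allowed iff $\chi_i\ge 2$ (exclusion), i.e.\ iff the target site is empty; similarly a jump of particle $i$ to the left (rate $q$, only for $i\le n$) increases $\chi_{i-1}$ by $1$ and decreases $\chi_i$ by $1$, allowed iff $\chi_{i-1}\ge 2$. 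The boundary spacings $\chi_0$ (to the left of particle $1$) and $\chi_n=\hat\eta_{n+1}-\hat\eta_n$ behave slightly differently: there is no particle $0$, so $\chi_1$ only gains from the right-jump of particle $2$ being blocked... wait — more carefully, $\chi_1=\hat\eta_2-\hat\eta_1$ increases when particle $1$ jumps left (rate $q$, always allowed since there is nothing to its left) and decreases when particle $2$ jumps left (blocked if $\chi_1=1$) or particle $1$ jumps right (blocked if $\chi_1=1$). And $\chi_n$ increases when particle $n+1$ jumps right (rate $\gb b$, always allowed) or particle $n$ jumps left (blocked if $\chi_{n-1}=1$), and decreases when particle $n$ jumps right (blocked if $\chi_n=1$).

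The main step is then to verify stationarity of $\pi^{\mathrm{sp}}:=\bigotimes_{i=1}^n \mathrm{Geom}(1-\mu_i)$ under this spacing dynamics, which I would do by checking detailed balance on each edge $i\in\lint 1,n-1\rint$ separately (the "bulk" transitions, which move mass between $\chi_i$ and $\chi_{i+1}$), plus a separate balance check at the two boundary edges. For a bulk transition between particle $i+1$'s left/right jumps: the move $(\chi_i,\chi_{i+1})\mapsto(\chi_i+1,\chi_{i+1}-1)$ happens at rate $q$ (particle $i+1$ jumps left, needs $\chi_{i+1}\ge 2$), and the reverse $(\chi_i+1,\chi_{i+1}-1)\mapsto(\chi_i,\chi_{i+1})$ at rate $p$ (particle $i+1$ jumps right, needs $\chi_i+1\ge 2$ i.e.\ $\chi_i\ge 1$, automatic). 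Detailed balance requires
\[
\pi^{\mathrm{sp}}(\chi_i,\chi_{i+1})\, q = \pi^{\mathrm{sp}}(\chi_i+1,\chi_{i+1}-1)\, p,
\]
i.e.\ $(1-\mu_i)\mu_i^{\chi_i-1}(1-\mu_{i+1})\mu_{i+1}^{\chi_{i+1}-1} q = (1-\mu_i)\mu_i^{\chi_i}(1-\mu_{i+1})\mu_{i+1}^{\chi_{i+1}-2} p$, which reduces to $q\,\mu_{i+1}=p\,\mu_i$, i.e.\ $\mu_{i+1}=\gl\mu_i$... but that is false for the given $\mu_i=\gb+\gl^{-i}(1-\gb)$. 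So the correct bookkeeping must be different: a right-jump of particle $i+1$ moves mass from $\chi_{i+1}$ to $\chi_i$ — wait, particle $i+1$ moving right \emph{increases} $\hat\eta_{i+1}$, hence increases $\chi_{i+1}=\hat\eta_{i+2}-\hat\eta_{i+1}$? No: $\chi_{i+1}=\hat\eta_{i+2}-\hat\eta_{i+1}$ \emph{decreases}, and $\chi_i=\hat\eta_{i+1}-\hat\eta_i$ \emph{increases}. So right-jump of particle $i+1$: $(\chi_i,\chi_{i+1})\mapsto(\chi_i+1,\chi_{i+1}-1)$ at rate $p$, needs $\chi_{i+1}\ge 2$. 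Left-jump of particle $i+1$: $(\chi_i,\chi_{i+1})\mapsto(\chi_i-1,\chi_{i+1}+1)$ at rate $q$, needs $\chi_i\ge 2$. Detailed balance between these two: $\pi^{\mathrm{sp}}(\chi_i,\chi_{i+1})\,p = \pi^{\mathrm{sp}}(\chi_i+1,\chi_{i+1}-1)\,q$ when $\chi_{i+1}\ge 2$, giving $p\,\mu_i^{-1}\cdot(\text{nothing})$... let me just write it: $\mu_i^{\chi_i-1}\mu_{i+1}^{\chi_{i+1}-1} p = \mu_i^{\chi_i}\mu_{i+1}^{\chi_{i+1}-2} q$, so $p\,\mu_{i+1} = q\,\mu_i$, i.e.\ $\mu_{i+1}=\gl^{-1}\mu_i$. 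Again this fails for the stated $\mu_i$.

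Given this tension, I suspect the intended dynamics interprets the spacings differently, or the process is \emph{not} reversible and one must instead verify $\pi^{\mathrm{sp}}\cL^{\mathrm{sp}}=0$ directly (stationarity, not reversibility), with the non-reversible contributions coming from the special slow particle $n+1$ breaking the chain; the telescoping of the generator balance equation should then pin down exactly $\mu_i=\gb+\gl^{-i}(1-\gb)$ as the unique solution of a recursion of the form $\mu_{i+1}$ determined from $\mu_i$ via a Möbius-type map fixing $1$ and $\gb/\gamma$ for appropriate $\gamma$. Concretely, I would write the full-generator identity: for the spacing chain, stationarity at a configuration $(\chi_1,\dots,\chi_n)$ with all $\chi_j\ge 2$ reads
\[
\sum_{i=1}^{n}\big[\text{in-rates}\big]\pi^{\mathrm{sp}}(\text{neighbors}) = \big[\text{total out-rate}\big]\pi^{\mathrm{sp}}(\chi),
\]
and after dividing by $\pi^{\mathrm{sp}}(\chi)$ one gets a constraint that telescopes — the terms for edge $i$ contribute $p/\mu_i + q\mu_i$ minus the balancing, and summing forces the recursion. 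The boundary cases ($\chi_j=1$ for some $j$) are then checked by the same algebra with the appropriate transitions removed, and they are automatically satisfied once the bulk recursion holds, because the geometric law makes the "missing" terms cancel in pairs.

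The hardest part will be bookkeeping the generator of the spacing process correctly at the two ends — in particular handling the slow particle $n+1$ (rate $\gb b = \gb(p-q)$, right only) at the right boundary and the free particle $1$ (no exclusion on its left) at the left boundary — and verifying that the product-geometric measure with the precisely-tuned parameters $\mu_i=\gb+\gl^{-i}(1-\gb)$ solves the resulting stationarity equations including these boundary terms. Once Lemma \ref{station} is in hand, Proposition \ref{deviats} follows quickly: write $\hat\eta_1(t)=\hat\eta_{n+1}(t)-\sum_{i=1}^n\chi_i(t)$; the slow particle satisfies $\hat\eta_{n+1}(t)\ge$ a Poisson$(\gb b t)$-type lower bound concentrated near $\gb b t$ with Gaussian-scale fluctuations $O(\sqrt{bt})$ (here I would use $t\ge 1$ and a Poisson/Bernstein tail), while $\sum_{i=1}^n\chi_i(t)$ is, by stationarity, equal in law to $\sum_{i=1}^n X_i$ with $X_i\sim\mathrm{Geom}(1-\mu_i)$ independent, whose mean is $\sum_i (1-\mu_i)^{-1} = \sum_i \big((1-\gb)(1-\gl^{-i})\big)^{-1}\le \frac{1}{1-\gb}\big(n + Cb^{-1}\log\min(n,b^{-1})\big)$ by exactly the estimate already used in the proof of Lemma \ref{lem:lbeq}; a Markov inequality on $\sum_i X_i$ and the concentration of $\hat\eta_{n+1}(t)$ then give \eqref{eq:woof} with the stated uniformity in $t\ge 1$, $n$, and $\gb\in(0,1)$.
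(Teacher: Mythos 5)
Your approach is the same as the paper's: observe that the spacings form a Markov process in their own right, and show that the product-geometric law \eqref{geom} is stationary for it (hence, since it is also the initial law, it is preserved for all $t$). You correctly diagnose that detailed balance fails — the spacing chain is not reversible — and that one must verify $\pi\cL=0$ directly. But the proposal stops short of actually carrying this out, which is where the mathematical content of the lemma lies. Writing the stationarity equation explicitly, plugging in the ansatz $\pi(m)=\prod_i(1-\mu_i)\mu_i^{m_i-1}$, and dividing by $\pi(m)$ yields (with the convention $\mu_0:=1$)
\begin{equation*}
p\mu_1+\sum_{i=1}^{n-1}\frac{p\mu_{i+1}+q\mu_{i-1}}{\mu_i}\ind_{\{m_i\ge 2\}}+\frac{q\mu_{n-1}+\gb(p-q)}{\mu_n}\ind_{\{m_n\ge 2\}}=q+\gb(p-q)+\sum_{i=1}^{n-1}(p+q)\ind_{\{m_i\ge 2\}}+p\ind_{\{m_n\ge 2\}}\;;
\end{equation*}
requiring this for all $(m_1,\dots,m_n)$ forces the constant term and the coefficient of each indicator to match separately, producing exactly the system \eqref{dasistem}. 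One must then verify that $\mu_i=\gb+\gl^{-i}(1-\gb)$ satisfies it: the bulk equation reduces to $q\gl^{-(i-1)}+p\gl^{-(i+1)}=(p+q)\gl^{-i}$, and the two boundary equations need their own short checks. Two claims in your sketch are incorrect. First, the boundary equations are \emph{not} "automatically satisfied once the bulk recursion holds": the bulk recursion $q\mu_{i-1}+p\mu_{i+1}=(p+q)\mu_i$ is a linear second-order difference equation with general solution $C_1+C_2\gl^{-i}$, and it is precisely the two boundary equations together with $\mu_0=1$ that select $C_1=\gb$, $C_2=1-\gb$; they are genuine constraints, not consequences. Second, this recursion is linear, not a "M\"obius-type map." A minor but real error is your initial bookkeeping of the spacing dynamics (stating that a right jump of particle $i$ "decreases $\chi_{i-1}$ by $1$ and increases $\chi_i$ by $1$"), which has the signs reversed; you correct this a few lines later, but a proof needs the transitions stated cleanly once. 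In short: the strategy is right and matches the paper's, but the algebraic verification that constitutes the proof is only gestured at, not done.
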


\begin{proof}
We use the notation $(m_i)_{i=1}^n\in \bbN^n$ to denote a generic element in the configuration space for the process $\left( \hat \eta_{i+1}(t)-\hat \eta_{i}(t) \right)_{i=1}^n$. We need to show that the measure defined above is stationary.

A measure $\pi$ is stationary if and only if we have

\begin{align*}
& p \pi(m_1+1,\dots,m_n)   \\
+\;&\sum_{i=1}^{n-1}\left[p\pi(\dots, m_i-1,m_{i+1}+1,\dots)+q\pi(\dots,m_{i-1}+1,m_i-1,\dots)\right]\ind_{\{m_i\ge 2\}}\\
+\;&q\pi(\dots,m_{n-1}+1,m_n-1) \ind_{\{m_n\ge 2\}}  +\gb b \pi(m_1,\dots,m_n-1)\ind_{\{m_n\ge 2\}}\\
&=
 \pi(m_1,\dots,m_n)\left(q + \sum_{i=1}^{n-1} (p+q) \ind_{\{m_i\ge 2\}}+ p\ind_{\{m_n\ge 2\}}  + \gb b \right),
\end{align*}
where in the sums, the dots stand for coordinates that are not modified (and $m_{i-1}$ simply has to be ignored when $i=1$).
If we assume that $\pi$ is the product of geometric laws with respective parameters $\mu_i$ (not yet fixed) then the equation above is equivalent to the system
\begin{equation}\label{dasistem}
\begin{cases}
 p\mu_1=q+ \gb(p-q), &\\
q \frac{\mu_{i-1}}{\mu_i}+p \frac{\mu_{i+1}}{\mu_i}=p+q, &\quad \forall i\in \lint 1, n-1 \rint,\\
q\frac{\mu_{n-1}}{\mu_n}+\frac{\gb(p-q)}{\mu_n}=p.&
\end{cases}
\end{equation}
where we have taken the convention $\mu_0=1$.
One can readily check that $\mu_i$ given by \eqref{ladef} satisfies this equation.
\end{proof}

\begin{remark}
Note that the equations \eqref{dasistem} can be obtained directly simply by using the fact that the expected drifts of the particles starting from the geometric distributions are given by
$p\mu_i-q \mu_{i-1}$ for the $i$-th particle $i\in \lint 1,n \rint$ and  $\gb(p-q)$ for the $n+1$-th particle, and that stationarity implies that the drifts are all equal. However, the proof is necessary to show that this condition is also a sufficient one.
\end{remark}

\begin{proof}[Proof of Proposition \ref{deviats}]
Starting from stationarity allows us to control the distance between the first and last particle at all time. 
In particular we have 
\begin{equation}\label{staz}\begin{split}
\bbE\left[ \hat \eta_{n+1}(t)-\hat \eta_{1}(t)\right]&=\bbE\left[ \hat \eta_{n+1} (0)-\hat \eta_{1}(0)\right]= \sum_{i=1}^n \frac{1}{1-\mu_i}
=\frac{1}{1-\gb} \sum_{i=1}^n \frac{1}{1-\gl^{-i}}\\
&\le \frac{1}{1-\gb} \left(n+  \frac{C}{\gl-1}\log \left(\min(  n,  |\gl-1|^{-1} ) \right)\right),
\end{split}\end{equation}
for some universal constant $C$.
By union bound, the probability in the l.h.s.\ of \eqref{eq:woof} is smaller than
\begin{multline}
 \bbP\left[\hat \eta_{n+1}(t) \le t\gb b- A \sqrt{bt}\right]\\+ 
 \bbP\left[\hat \eta_{n+1}(t)
 -\hat \eta_1(t)\ge  \frac{A}{1-\gb}\left[n+b^{-1} \log \min(n,b^{-1})\right]\right].
\end{multline}
The first term is small because the expectation and the variance of $\hat \eta_{n+1}(t)$ are equal to $t\gb b$. 
The second can be shown to be going to zero with $A$ using \eqref{staz} and Markov's inequality for $\hat \eta_{n+1}(t)-\hat\eta_1(t)$.
\end{proof}

\subsection{Proof of Proposition \ref{lerimo} in the case $\alpha=0$}\label{Subsec:alpha0}

We restate and prove the result in this special case (observe that the result for $R_N$ is trivial for $\alpha=0$).

\begin{proposition}
Assume that $\alpha=0$ and \eqref{largebias} holds. We have for any $C>0$ and any $\gep>0$
\begin{equation}
\lim_{N\to \infty} \sup_{t\in [0,C b_N^{-1} N]} \bbP\left[ |L_N(t)- b_N t|\ge \gep N \right]=0.
\end{equation}
\end{proposition}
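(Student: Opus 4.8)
The plan is to sandwich $L_N(t)$ between two quantities that can both be analyzed via the auxiliary slow-particle model of Subsection~\ref{sec:auxi}. Recall that $L_N(t)=\ell_N(\eta^\wedge_t)-1$ is (up to a shift by one) the position of the leftmost particle in the process started from the packed configuration $\wedge$, where all $k$ particles sit on $\lint N-k+1,N\rint$. Since $\alpha=0$ we have $k=k_N=N^{o(1)}$, and the target is $L_N(b_N^{-1}Nt)\approx b_N\cdot b_N^{-1}Nt = Nt$, uniformly for $t$ in a compact interval, i.e.\ the leftmost particle travels ballistically at speed $b_N$ on the natural time scale.

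For the \textbf{lower bound} on $L_N$ (the particle does not lag behind), I would compare downward: the leftmost particle of $\eta^\wedge$ is stochastically dominated from below by a single random walk on $\bbZ$ with jump rates $p_N$ right, $q_N$ left, started at $N-k+1$, since the other particles only ever block it from moving left. Hence $\ell_N(\eta^\wedge_t)$ dominates $N-k+1$ plus such a biased walk run for time $t$; by the law of large numbers for the Poissonized walk, at time $b_N^{-1}Nt$ this walk has displacement $b_N\cdot b_N^{-1}Nt(1+o(1)) = Nt(1+o(1))$ with fluctuations of order $\sqrt{b_N^{-1}Nt}=o(N)$ (using $b_NN\to\infty$), uniformly in $t$ over a compact set by a maximal inequality. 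This gives $L_N(b_N^{-1}Nt)\ge Nt-\gep N$ with high probability.

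For the \textbf{upper bound} on $L_N$ (the particle does not run too far left), I would use Proposition~\ref{deviats}. The idea: run time backwards / use the auxiliary model with $n$ particles to produce an upper barrier. Concretely, place the auxiliary system $(\hat\eta_i)_{i=1}^{n+1}$ with $n=k$ particles, slow particle parameter $\gb$ close to $1$, started so that $\hat\eta_{n+1}(0)$ sits at (roughly) $N$ and the $k$ real particles are to its left with the stationary geometric spacings; by Lemma~\ref{station} these spacings stay stationary, so the leftmost auxiliary particle $\hat\eta_1(t)$ stays within $O(k+b_N^{-1}\log\min(k,b_N^{-1}))=o(N)$ of $\hat\eta_{n+1}(t)$ for all $t$. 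Since the real leftmost particle in $\eta^\wedge$ starts to the right of $\hat\eta_1(0)$ and the dynamics can be coupled so that $\ell_N(\eta^\wedge_t)\ge \hat\eta_1(t)$ is \emph{not} what we want — rather we want an upper bound — I would instead couple so that the \emph{real} configuration is dominated above: the extra rightmost slow particle of the auxiliary chain, moving at rate $\gb b_N<b_N$, provides a moving wall, and the leftmost real particle cannot overtake the auxiliary leftmost particle, which itself cannot be too far behind $\hat\eta_{n+1}(t)\approx N+\gb b_N t$. Taking $\gb\uparrow 1$ and using that $k+b_N^{-1}\log\min(k,b_N^{-1})=o(N)$ under \eqref{largebias} with $\alpha=0$, together with the uniform-in-$t$, $n$, $\gb$ tail bound \eqref{eq:woof}, yields $L_N(b_N^{-1}Nt)\le Nt+\gep N$ with high probability, uniformly over $t\in[0,C]$.

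The \textbf{main obstacle} is setting up the coupling in the upper-bound step so that Proposition~\ref{deviats}, which is a statement about a process on the infinite line, genuinely controls the finite-segment dynamics $h^\wedge$ — in particular handling the right boundary at $N$ (the real particles pile up there and there is no site $N+1$) and ensuring the monotone coupling really does place the real leftmost particle to the right of the auxiliary leftmost particle at all times, not just initially. One must check that removing the right wall only helps (the real particles are \emph{more} confined to the right than in the auxiliary model, so their leftmost particle is further right), and that the loss of $o(N)$ coming from the slow particle, the spacing fluctuations, and the $\gb\to1$ limit can all be absorbed into $\gep N$; the uniformity in $t$ is then immediate from the $\sup_{t\ge 1}$ in \eqref{eq:woof} plus a separate trivial argument for small $t$.
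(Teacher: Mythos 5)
The central error is that you have the initial configuration wrong: $\wedge$ is the \emph{maximal} height function, and the corresponding particle configuration $\eta^\wedge$ has all $k$ particles on the leftmost sites $\lint 1,k\rint$, not on $\lint N-k+1,N\rint$ (the latter is $\vee$; the paper even records $\eta^\wedge_i(0)=i$ in this very proof). Under $\eta^\wedge$ the leftmost particle starts at $1$ and drifts to the \emph{right}, so $L_N(t)\approx b_N t$ and the two directions of the estimate are: (i) $L_N$ does not get \emph{ahead}, $L_N(t)\le b_N t+\gep N$; (ii) $L_N$ does not \emph{lag}, $L_N(t)\ge b_N t-\gep N$. This swap of $\wedge$ and $\vee$ propagates and flips the sense of every comparison you then attempt.

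Concretely, your ``lower bound'' step asserts that the leftmost particle is dominated \emph{from below} by a free biased walk on $\bbZ$ started at $N-k+1$, justified by ``the other particles only ever block it from moving left.'' That justification is backwards: all the other particles are to the \emph{right} of the leftmost one, so exclusion only ever blocks its jumps to the \emph{right}. The correct and standard domination (the one the paper uses) therefore goes the other way — the leftmost particle is dominated from \emph{above} by a biased walk on $\bbN$ started from $1$ — and it yields part (i), not part (ii). With the wrong starting point and the wrong domination direction, this step proves nothing. Your ``upper bound'' step is the one that should invoke Proposition \ref{deviats}, but the geometry is again mirrored: the paper starts the auxiliary chain with $\hat\eta_{n+1}(0)=0$, to the \emph{left} of the segment, so that initially $\eta^\wedge_i(0)=i\ge\hat\eta_i(0)$; the coupling then keeps $\eta^\wedge_i(t)\ge\hat\eta_i(t)$ until $\hat\eta_{n+1}$ reaches $N+1$, giving a \emph{lower} bound on $L_N$, i.e.\ part (ii). You instead place $\hat\eta_{n+1}(0)$ near $N$, ask for an upper bound, and invoke the slow particle as a ``moving wall'' on the right — but a right wall is irrelevant for controlling lagging of the leftmost particle, and the inequality you yourself write, $\ell_N(\eta^\wedge_t)\ge\hat\eta_1(t)$, is a lower bound. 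You acknowledge the mismatch and say you would ``couple so that the real configuration is dominated above,'' but no such coupling is described, and none is available: the left boundary of the segment and the absence of the slow $(n+1)$-th barrier particle in the real system both push $\eta^\wedge$ to the \emph{right} of $\hat\eta$, not to the left. A smaller slip: $\alpha=0$ gives $k=o(N)$, not $k=N^{o(1)}$; fortunately only $k+b_N^{-1}\log\min(k,b_N^{-1})=o(N)$ is needed, which holds under \eqref{largebias} with $k=o(N)$.

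In short, the ingredients you reach for — a one-particle random-walk comparison plus the slow-particle auxiliary model — are exactly the right ones, but the misidentification of $\wedge$ reverses every inequality, so neither direction of the estimate is actually established.
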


\begin{proof}
First let us remark that the convergence
\begin{equation}
\lim_{N\to \infty} \sup_{t\in [0,C b_N^{-1} N]} \bbP\left[ L_N(t)\ge b_N t + \gep N \right]=0
\end{equation}
follows from the fact that the first particle is stochastically
dominated by a simple random walk with bias $b_N \gg N^{-1}$ on the segment, starting from position $1$.

It remains to prove that 
\begin{equation}\label{srup}
\lim_{N\to \infty} \sup_{t\in [0,C b_N^{-1} N]} \bbP\left[ L_N(t)\le  b_N t-  \gep N \right]=0.
\end{equation}
We provide the details for the most important case $C=1$, and then we briefly explain how to deal with the case $C>1$.\\
We couple $\eta^{\wedge}(t)$ with the system $\hat \eta(t)$ of the previous section, choosing $n=k$ and $\gb=1-(\gep/2)$.
  The coupling is obtained by making  the $i$-th particle in both processes try to jump at the same time (for $i\in \lint 1, k \rint$) with rate $p$ and $q$, and rejection of the moves occurs as consequences of the exclusion rule or boundary condition (for $\eta^{\wedge}$ only).
  Initially of course we have 
  \begin{equation}\label{initial}
 \forall i \in \lint 1, n \rint, \quad  \eta^{\wedge}_{i}(0)\ge \hat \eta_i(0).
 \end{equation} 
 because of the choice of the initial condition for $\hat \eta$ (recall that by definition $\eta^{\wedge}_i(0)=i$).
 The boundary at zero, and the presence of one more particle on the right in $\hat \eta$ gives $\eta^\wedge$ only more pushes towards the right, so that the ordering is preserved at least until $\hat \eta_{n+1}$ reaches the right side of the segment and the effect of the other 
 boundary condition starts to be felt:
\begin{equation}
\eta^\wedge_{i}(t)\ge \hat \eta_i(t),\quad  \forall i \in \lint 1, n \rint, \forall t\le \cT
\end{equation}
where 
$\cT:= \inf \{t \ge 0 \ : \ \hat \eta _{n+1}(t)=N+1  \}$.

\medskip
Using the assumption \eqref{largebias}, a second moment estimate and the fact that $\gb<1$, we have 
$$\lim_{N\to \infty} \bbP[ \cT\le b^{-1}_N N] =0,$$ and hence
\begin{equation}
\lim_{N\to \infty} \sup_{t\le b^{-1}_N N} \bbP\left[\eta^\wedge_1(t)\le \hat \eta_1(t)\right]=0.
\end{equation}
Therefore, it suffices to control the probability of $\hat{\eta}_1(t) \le b_N t - \gep N$. Observe that the assumptions $(k_N/N)\to 0$ and \eqref{largebias} imply that for any given $A>0$, for all $N$ sufficiently large and for any 
$t\le b^{-1}_N N$ we have
$$\left( \sqrt{b_N t}+ \frac{1}{1-\gb}\left[k_N+b_N^{-1}\log\min(k_N , b^{-1}_N) \right] \right)\le \gep\frac{N}{2 A}.$$
Furthermore $(1-\beta)b_Nt \le \gep N/ 2$ for all $t\in [0,b_N^{-1} N]$. Thus applying Proposition \ref{deviats} we obtain that for $N$ sufficiently large 
\begin{multline}
\sup_{t\in [0,b_N^{-1} N]}   \bbP\left[ \hat \eta_1(t)\le  b_N t-\gep N\right]
\\
 \le  \sup_{t\in [0,b_N^{-1} N]}  \bbP\left[ \hat \eta_1(t)\le  \gb b_N t- A\left( \sqrt{b_N t}+ \frac{k_N+b_N^{-1}\log\min(k , b^{-1}_N)}{1-\gb}\right) \right]\le \delta.
\end{multline}
where $\delta$ can be made  arbitrarily small by choosing  $A$ large. This concludes the proof of \eqref{srup} for $C=1$. To treat the case $C>1$, it suffices to shift the particle system $\hat{\eta}(0)$ to the left by $\lfloor (C-1)b_N^{-1} N \rfloor$ and to apply the same arguments as before so we omit the details.
\end{proof}

 \subsection{Proof of Proposition \ref{lerimo} in the case $\alpha\in(0,1)$}\label{Subsec:alphaNon0}
 
 The roles of $L_N$ and $R_N$ being symmetric, we only need to prove the result for $L_N$ (but we do not assume here that $\alpha\le 1/2)$.
A direct consequence of Proposition \ref{prop:lidro} is that for all $s\in \bbR$  and $\gep>0$ we have 
\begin{equation}
\lim_{N\to \infty}\bbP\left[ \eta^{\wedge}_1(b_N^{-1} N s ) \ge N\left(\ell_{\alpha}(s)+\gep\right)\right] = 0.
\end{equation}
Hence to conclude  we want to prove that 
 \begin{equation}\label{toprove}
 \lim_{N\to \infty}\bbP\left[ \eta^{\wedge}_1(b_N^{-1} N s ) \le N\left(\ell_{\alpha}(s)-\gep\right) \right]=0.
 \end{equation}
 For the remainder of the proof $s$ and $\gep$ are considered as fixed parameters.
 We set $\delta \in (0,\alpha)$, and $n=\lceil \delta N \rceil$. To prove \eqref{toprove}, we are going to compare $(\eta^{\wedge}_i)_{i=1}^n$
 to the particle system considered in Section \ref{sec:auxi}.
 
 First we observe that as  a consequence of Proposition \ref{prop:lidro}, we have, for any  $T>0$ 
 \begin{equation}\label{camarch}
 \lim_{N \to \infty} \bbP \left[ \exists t\in [0, T], \   \eta^{\wedge}_{n+1}(b^{-1}_N Nt)\le N \ell_{\alpha}(t) \right]=0
 \end{equation}
We define the process $\hat \eta$ as in Section \ref{sec:auxi} with $\gb=1-\gep/(2s)$ but with a shifted initial condition. 
More precisely we set
$$\hat \eta_{n+1}(0)= N\left(\ell_{\alpha}(s)-s \right)\le 0,$$
and choose the initial particle spacings to be independent and with geometric distributions given by \eqref{geom}.
As \eqref{initial} is satisfied, we can couple the two processes in such a way that 
\begin{equation}
  \forall i \in \lint 1, n \rint\;, \forall t\le \cT', \quad  \eta^{\wedge}_{i}(t)\ge \hat \eta_i(t),
\end{equation}
where $\cT':= \inf\{ t \ :  \hat \eta_{n+1}(t)=\eta^{\wedge}_{n+1}(t)\}$. It is a simple exercise to show that for every $T>0$  the position of $\hat \eta_{n+1}$ satisfies the following law of large numbers
\begin{equation}
 \lim_{N\to \infty}\bbP\left[ \sup_{t\in [0,T]}  \left|\frac{\hat \eta_{n+1}(b_N^{-1}N t)}{N}-\left(\ell_{\alpha}(s)-s \right)-\gb t\right|\ge \kappa  \right]=0,\quad \forall \kappa >0\;,
\end{equation}
which, combined with \eqref{camarch}, yields
\begin{equation}
 \lim_{N\to \infty} \bbP[\cT' \ge b^{-1}_N N s]=1.
 \end{equation}
 and thus we only need to prove  \eqref{toprove} with $\eta^{\wedge}_1$ replaced by $\hat \eta_{1}$. More precisely 
 we prove that given $\kappa>0$, one can find $\delta$ sufficiently small such that 
 \begin{equation}\label{provex}
  \bbP\left[ \hat\eta_1(b_N^{-1} N s ) \le N\left(\ell_{\alpha}(s)-\gep\right) \right]\le \kappa.
 \end{equation}

 Using Proposition \ref{deviats} for $t=b^{-1}_N N s$  and $A= \delta^{-1/2}$ and taking into account the new initial condition, 
 the probability of the event 
 $$ \left\{ \hat \eta_1(b^{-1}_N N s)\le N\left( \ell_{\alpha}(s)-\frac{\gep}{2} \right)- \delta^{-1/2}\left( \sqrt{Ns}+ \frac{2s}{\gep}[\delta N + b^{-1}_N \log b^{-1}_N] \right) \right\}$$
 has a probability which can be made arbitrarily small if $\delta$ is chosen sufficiently small.
 We can then conclude that \eqref{provex} holds by observing that for $\delta$ sufficiently small and $N$ sufficiently large 
 $$\delta^{-1/2}\left( \sqrt{Ns}+ \frac{2s}{\gep}[\delta N + b^{-1}_N \log b^{-1}_N] \right)\le \gep N/2.$$

  \qed

\section{Lower bound on the mixing time for small biases}\label{Sec:LBSB}

Until the end of the section, we assume that the small bias assumption \eqref{smallbias} holds.

\medskip

Let us set
$s_{\delta}(N):= (1-\delta) \log k / (2\gap_N)$.
We show that at time $s_{\delta}$, equilibrium is not reached if one starts from one of the extremal conditions (some moderate efforts allow to replace $\max$ by 
$\min$ in the statement of the proposition).
\begin{proposition}\label{lbsb}
 When assumption \eqref{smallbias} holds, we have
 \begin{equation}
  \lim_{N\to \infty} \max_{\zeta\in \{\vee,\wedge\} } \| \bbP( h^{\zeta}_{s_{\gd(N)}}\in \cdot )-\pi_{N,k}\|_{TV}=1
 \end{equation}
As a consequence for every $\gep\in (0,1)$, $\Tm^{N,k}(\gep)\ge s_{\delta}(N)$ for $N$ sufficiently large. 
\end{proposition}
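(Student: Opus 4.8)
The plan is to use Wilson's eigenfunction method \cite{Wil04}, with the principal eigenfunction $\Phi:=f_{N,k}=f^{(1)}_{N,k}$ of \eqref{eq:fj}. It satisfies $\cL_{N,k}\Phi=-\gap_N\Phi$; being non-constant, stationarity forces $\bbE_{\pi_{N,k}}[\Phi]=0$ (let $t\to\infty$ in $\int\Phi\,\mathrm d\pi_{N,k}=e^{-\gap_N t}\int\Phi\,\mathrm d\pi_{N,k}$). Started from $\sigma_0\in\{\wedge,\vee\}$, the contraction \eqref{eq:contract1} gives $\bbE[\Phi(h^{\sigma_0}_t)]=e^{-\gap_N t}\Phi(\sigma_0)$, whereas the equilibrium mean is $0$. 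First I would show that at $t=s_\delta(N)$ this gap still dominates the fluctuations of $\Phi$ (bounded uniformly in $t$ through the carré du champ), and then conclude by Chebyshev that a level set of $\Phi$ separates $\bbP(h^{\sigma_0}_{s_\delta}\in\cdot)$ from $\pi_{N,k}$.

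The first input is a bound on the carré du champ. For $\zeta\in\Omega_{N,k}$ write $\Gamma\Phi(\zeta):=\sum_{x=1}^{N-1}r_x(\zeta)(\Phi(\zeta^x)-\Phi(\zeta))^2$, with $r_x\le p\le1$ the corner-flip rate at $x$. A flip at $x$ changes $\Phi$ by $\sin(\pi x/N)\lambda^{\zeta(x)/2}$ in modulus, and $\zeta(x)=h(\zeta)(x)\le\wedge(x)\le k$; also a height function in $\Omega_{N,k}$ has at most $2k$ corners. Hence $R:=\sup_{\zeta\in\Omega_{N,k}}\Gamma\Phi(\zeta)\le 2k\lambda^k=k^{1+o(1)}$, using $\lambda^k=e^{k\ln\lambda}=e^{O(Nb_N)}=k^{o(1)}$ under \eqref{smallbias}. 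From $\frac{\mathrm d}{\mathrm dt}\bbE[\Phi(h_t)^2]=\bbE[\Gamma\Phi(h_t)]-2\gap_N\bbE[\Phi(h_t)^2]$ one gets $\var(\Phi(h^{\zeta}_t))\le R/(2\gap_N)$ for all $t$ and all $\zeta$, and the same computation in equilibrium gives $\var_{\pi_{N,k}}(\Phi)=\bbE_{\pi_{N,k}}[\Phi^2]\le R/(2\gap_N)$. That $R$ is of order $k^{1+o(1)}$ (rather than $N$) is exactly what makes the argument survive when $k\ll N$.

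The second input is a lower bound on $\max(|\Phi(\wedge)|,|\Phi(\vee)|)$. Since $\Phi$ is strictly increasing and $\wedge$, resp. $\vee$, is the maximal, resp. minimal, element of $\Omega_{N,k}$ while $\bbE_{\pi_{N,k}}[\Phi]=0$, we have $\Phi(\wedge)\ge0\ge\Phi(\vee)$, so $\max(|\Phi(\wedge)|,|\Phi(\vee)|)\ge\frac12(\Phi(\wedge)-\Phi(\vee))$. Setting $m(x):=\frac12(\wedge(x)-\vee(x))=\min(x,k,N-x)$ and using $\lambda^{\vee(x)/2}=\lambda^{-(N-k)/2}\lambda^{|x-(N-k)|/2}\ge\lambda^{-(N-k)/2}=k^{-o(1)}$ (as $(N-k)b_N\le Nb_N=o(\log k)$),
\[ \Phi(\wedge)-\Phi(\vee)=\sum_{x=1}^{N-1}\sin\!\Big(\frac{\pi x}{N}\Big)\frac{\lambda^{\wedge(x)/2}-\lambda^{\vee(x)/2}}{\lambda-1}\ \ge\ \sum_{x=1}^{N-1}\sin\!\Big(\frac{\pi x}{N}\Big)\,\lambda^{\vee(x)/2}\,\frac{\lambda^{m(x)}-1}{\lambda-1}. \]
The target is $\gap_N(\Phi(\wedge)-\Phi(\vee))^2\ge k^{2-o(1)}$. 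I would obtain it by retaining, in the last sum, the range of $x$ where the summand is of maximal order — the block $\lint k,N-k\rint$ (on which $m(x)=k$, $\lambda^{\vee(x)/2}=\lambda^{-x/2}$ and $\frac{\lambda^k-1}{\lambda-1}\ge k$) when $k=o(N)$, and a window around $x\approx k$ when $k$ is a fixed fraction of $N$ — and by distinguishing whether $Nb_N$ stays bounded (so $\lambda^{-x/2}\asymp1$ on $\lint1,N\rint$) or diverges (so the geometric factor localises the sum on a window of length $\asymp b_N^{-1}$); combining the resulting elementary estimates with $\gap_N\asymp b_N^2+N^{-2}$ (from \eqref{eq:taylorgap}) and $(Nb_N)^2=k^{o(1)}$ finishes this step.

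Finally I would assemble the pieces. Take $\sigma_0\in\{\wedge,\vee\}$ maximising $|\Phi|$, replacing $\Phi$ by $-\Phi$ if needed so that $\Phi(\sigma_0)>0$; with $m_t:=e^{-\gap_N t}\Phi(\sigma_0)$ and $A:=\{\zeta:\Phi(\zeta)>m_t/2\}$, Chebyshev gives $\bbP(h^{\sigma_0}_t\notin A)\le4\var(\Phi(h^{\sigma_0}_t))/m_t^2$ and $\pi_{N,k}(A)\le4\var_{\pi_{N,k}}(\Phi)/m_t^2$, hence
\[ \big\|\bbP(h^{\sigma_0}_t\in\cdot)-\pi_{N,k}\big\|_{TV}\ \ge\ 1-\frac{8R}{\gap_N\,\Phi(\sigma_0)^2}\,e^{2\gap_N t}. \]
At $t=s_\delta(N)$ one has $e^{2\gap_N s_\delta}=k^{1-\delta}$, so with $R\le2k^{1+o(1)}$ and $\Phi(\sigma_0)^2\ge\frac14(\Phi(\wedge)-\Phi(\vee))^2\ge k^{2-o(1)}/\gap_N$ the right-hand side is $\ge1-Ck^{-\delta+o(1)}\to1$; this is the proposition, and $\Tm^{N,k}(\gep)\ge s_\delta(N)$ follows because $d^{N,k}(s_\delta)$ dominates this total-variation distance for $\zeta=\sigma_0$. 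The main obstacle is the second input: establishing $\gap_N(\Phi(\wedge)-\Phi(\vee))^2\ge k^{2-o(1)}$ requires the somewhat case-dependent analysis of the explicit eigenfunction sum indicated above, and this sharp power of $k$ (together with $R=k^{1+o(1)}$) is precisely what is needed to push the lower bound up to $s_\delta(N)$ rather than to a shorter multiple of $b_N^{-2}\log k$.
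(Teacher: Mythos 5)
Your plan is the paper's proof: same principal eigenfunction $f_{N,k}$, the same exact contraction of the mean via \eqref{eq:contract1}, a variance bound of order $k\gl^k/(2\gap_N)$ (the paper's Lemma~\ref{Lemma:BoundVar} obtains it from the predictable bracket of the martingale $f_{N,k}(h^\wedge_t)e^{\gap_N t}$, which is the same computation as your carr\'e du champ argument), and the Chebyshev separation argument (invoked in the paper via \cite[Proposition~7.12]{LevPerWil}). The one place where your write-up is more hesitant than it needs to be is the lower bound on $f_{N,k}(\wedge)-f_{N,k}(\vee)$: no case split on $k/N$ or on $Nb_N$ is required, since the elementary inequality $\frac{\gl^a-\gl^b}{\gl-1}\ge \gl^{b}(a-b)$ for $a\ge b$, combined with $\vee(x)\ge k-N$ everywhere and $\wedge(x)-\vee(x)\ge k$ on $\lint N/4,3N/4\rint$, gives $f_{N,k}(\wedge)-f_{N,k}(\vee)\ge \tfrac18\gl^{(k-N)/2}Nk$ in one line (Lemma~\ref{lem:firstmom}), which is exactly the $k^{1-o(1)}/\sqrt{\gap_N}$ you need once you observe that $\gl^{(N-k)/2}$ and $N\sqrt{\gap_N}$ are both $k^{o(1)}$ under \eqref{smallbias}.
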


The method to obtain a lower bound on the mixing time for small biases is similar to the one used in the symmetric case (see \cite[Section 3.3]{Wil04}),
and is  based on the control of the two first moments of $f_{N,k}(h^{\wedge}_t)-f_{N,k}(\zeta)$ where $\zeta$ is independent of $h^\wedge_t$ and distributed according to $\pi_{N,k}$:
if at time $t$ the mean of $f_{N,k}(h^\wedge_t)-f_{N,k}(\zeta)$ is much larger than its standard deviation, then the system is not at equilibrium (cf  \cite[Proposition 7.12]{LevPerWil}).

We present estimates for the first two moments that we prove at the end of the section.
This first moment bound is elementary.
\begin{lemma}\label{lem:firstmom}
We have 
\begin{equation}
f_{N,k}(\wedge)-f_{N,k}(\vee)\ge  \frac{1}{8} \gl^{(k-N)/2} Nk\;,
\end{equation}
and as a consequence, for every $t\ge 0$
\begin{equation}\label{lunoulot}
\max( f_{N,k}(\wedge), -f_{N,k}(\vee)) \ge \frac{1}{16} \gl^{(k-N)/2} Nk\;.
\end{equation}
\end{lemma}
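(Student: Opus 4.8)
The plan is to estimate $f_{N,k}(\wedge)-f_{N,k}(\vee)$ directly from the defining formula~\eqref{eq:fj}. The terms $-a_{N,k}(x)/(\gl-1)$ cancel in the difference, and since $\sin(x\pi/N)\ge 0$ for $x\in\lint 1,N-1\rint$ while $\wedge(x)\ge \vee(x)$ and $\gl>1$, we have
\begin{equation*}
f_{N,k}(\wedge)-f_{N,k}(\vee)=\sum_{x=1}^{N-1}\sin\Big(\tfrac{x\pi}{N}\Big)\,\frac{\gl^{\wedge(x)/2}-\gl^{\vee(x)/2}}{\gl-1}\;\ge\; 0,
\end{equation*}
so it is legitimate to bound the right-hand side from below by restricting the sum to any subset of the indices $x\in\lint 1,N-1\rint$.

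I would then use two elementary observations. First, writing $m_x:=\tfrac12(\wedge(x)-\vee(x))$, which is a nonnegative integer since both height functions have the parity of $x$, one has
\begin{equation*}
\frac{\gl^{\wedge(x)/2}-\gl^{\vee(x)/2}}{\gl-1}=\gl^{\vee(x)/2}\sum_{j=0}^{m_x-1}\gl^{\,j}\;\ge\; m_x\,\gl^{\vee(x)/2}\;\ge\; m_x\,\gl^{(k-N)/2},
\end{equation*}
the last step using that $\vee(x)=(-x)\vee\big(x-2(N-k)\big)\ge k-N$ for every $x$. Second, a short computation exploiting $k\le N/2$ gives the explicit form $m_x=\min(x,k,N-x)$ for every $x\in\lint 0,N\rint$ (indeed $m_x=x$ on $\lint 1,k\rint$, $m_x=k$ on $\lint k,N-k\rint$, and $m_x=N-x$ on $\lint N-k,N\rint$). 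Putting these together,
\begin{equation*}
f_{N,k}(\wedge)-f_{N,k}(\vee)\;\ge\;\gl^{(k-N)/2}\sum_{x=1}^{N-1}\sin\Big(\tfrac{x\pi}{N}\Big)\min(x,k,N-x).
\end{equation*}

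It then suffices to show $\sum_{x=1}^{N-1}\sin(x\pi/N)\min(x,k,N-x)\ge\tfrac18 Nk$, and here I would simply keep only the indices $x\in\lint\lceil N/4\rceil,\lfloor 3N/4\rfloor\rint$: on this range $\sin(x\pi/N)\ge\sin(\pi/4)=1/\sqrt2$, and $\min(x,k,N-x)\ge k/2$ (treating the cases $k\le N/4$ and $N/4<k\le N/2$ separately, each time using $k\le N/2$). Since this index set contains at least $N/2-1$ integers, the sum is at least $\tfrac1{\sqrt2}\cdot\tfrac k2\cdot\big(\tfrac N2-1\big)$, which is $\ge\tfrac18 Nk$ once $N$ exceeds an explicit constant; the remaining finitely many values of $N$ are checked directly. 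This establishes the first inequality, and the second then follows at once from $a+b\le2\max(a,b)$ applied with $a=f_{N,k}(\wedge)$ and $b=-f_{N,k}(\vee)$ (the resulting bound not depending on $t$). The only mildly delicate point is the trigonometric sum estimate of the last step; everything else is routine manipulation of the explicit eigenfunction.
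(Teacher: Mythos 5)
Your argument is correct and is essentially the same as the paper's: both lower-bound the quotient $(\gl^{\wedge(x)/2}-\gl^{\vee(x)/2})/(\gl-1)$ by $\gl^{\vee(x)/2}\cdot\tfrac12(\wedge(x)-\vee(x))$ via the geometric-sum identity, then use $\vee(x)\ge k-N$, $\wedge(x)-\vee(x)\ge k$ on the middle half of the segment, and $\sin\ge 1/\sqrt2$ there. Your write-up is actually a bit more careful than the paper's about the integer endpoints of the summation range and the resulting constant; the paper states the bound directly without that bookkeeping.
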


The second moment estimates rely on the control of a martingale bracket.
\begin{lemma}\label{Lemma:BoundVar}
For all $t\ge 0$, $N\ge 1$ and all $k\in \lint 1, N/2\rint$
we have 
\begin{equation}
\var(f_{N,k}(h_t^\wedge)\le \frac{k \lambda^k}{2 \gap_N},
\end{equation}
The same bound holds for $\var(f_{N,k}(h_t^\vee))$ and $\var_{\pi_{N,k}}(f_{N,k})$.
\end{lemma}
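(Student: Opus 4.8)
The plan is to exploit that $f_{N,k}$ is an eigenfunction of $\cL_{N,k}$ with eigenvalue $-\gap_N$, so that for any initial height function $\zeta$ the process $M_t:=e^{\gap_N t}f_{N,k}(h_t^\zeta)$ is a martingale, with predictable quadratic variation $\langle M\rangle_t=\int_0^t e^{2\gap_N s}\,\Gamma(f_{N,k})(h_s^\zeta)\,\dd s$, where $\Gamma(g):=\cL_{N,k}(g^2)-2g\,\cL_{N,k}g$ is the carr\'e-du-champ --- for our jump dynamics, the sum over admissible corner flips of (jump rate)$\times$(increment of $g$)$^2$. Combining $\bbE[M_t^2]=f_{N,k}(\zeta)^2+\bbE[\langle M\rangle_t]$ with $\bbE[M_t]=f_{N,k}(\zeta)$ and the eigenrelation, I would obtain
\[
\var\big(f_{N,k}(h_t^\zeta)\big)=\int_0^t e^{-2\gap_N(t-s)}\,\bbE\big[\Gamma(f_{N,k})(h_s^\zeta)\big]\,\dd s\;\le\;\frac{1}{2\gap_N}\sup_{\zeta\in\Omega_{N,k}}\Gamma(f_{N,k})(\zeta),
\]
which handles $\zeta=\wedge$ and $\zeta=\vee$. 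For $\var_{\pi_{N,k}}(f_{N,k})$ I would instead use reversibility: since $\bbE_{\pi_{N,k}}[f_{N,k}]=0$ (stationarity of $\pi_{N,k}$ and $\gap_N\neq0$), one has $\gap_N\var_{\pi_{N,k}}(f_{N,k})=-\langle f_{N,k},\cL_{N,k}f_{N,k}\rangle_{\pi_{N,k}}=\tfrac12\bbE_{\pi_{N,k}}[\Gamma(f_{N,k})]$, again at most $\tfrac{1}{2\gap_N}\sup_\zeta\Gamma(f_{N,k})(\zeta)$. Thus the whole lemma reduces to the pointwise estimate $\sup_\zeta\Gamma(f_{N,k})(\zeta)\le k\lambda^k$.

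To prove this I would compute $\Gamma(f_{N,k})$ from \eqref{eq:fj}. Flipping a corner at site $y$ changes $f_{N,k}$ only through its $x=y$ term, and an elementary computation gives the increment $-\sin(y\pi/N)\,\lambda^{\frac12\zeta(y)-1}$ when an upward corner flips down (rate $p$) and $+\sin(y\pi/N)\,\lambda^{\frac12\zeta(y)}$ when a downward corner flips up (rate $q$). Hence, bounding $\sin^2\le1$,
\begin{multline*}
\Gamma(f_{N,k})(\zeta)=\sum_{y\,:\,\text{up-corner}}p\,\sin^2(y\pi/N)\,\lambda^{\zeta(y)-2}+\sum_{y\,:\,\text{down-corner}}q\,\sin^2(y\pi/N)\,\lambda^{\zeta(y)}\\
\le\; p\lambda^{-2}\sum_{y\,:\,\text{up-corner}}\lambda^{\zeta(y)}+q\sum_{y\,:\,\text{down-corner}}\lambda^{\zeta(y)}.
\end{multline*}

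The combinatorial core is to bound the two remaining sums. For $\ell\in\bbZ$ let $u_\ell$ be the number of up-steps of the lattice path $\zeta$ going from height $\ell-1$ to $\ell$; then $\sum_\ell u_\ell=k$ (the number of particles) and $u_\ell=0$ for $\ell>k$ because $\zeta\le\wedge$ has maximum value $k$. Assigning to each upward corner at height $\ell$ the up-step ending there --- distinct within this family, each landing at height $\ell$ --- shows that there are at most $u_\ell$ upward corners at height $\ell$, so $\sum_{y\,:\,\text{up-corner}}\lambda^{\zeta(y)}\le\sum_\ell u_\ell\lambda^\ell\le k\lambda^k$. Assigning to each downward corner at height $\ell$ the up-step from height $\ell$ to $\ell+1$ starting just to its right --- again distinct within this family, each landing at height $\ell+1$ --- shows that there are at most $u_{\ell+1}$ downward corners at height $\ell$, so $\sum_{y\,:\,\text{down-corner}}\lambda^{\zeta(y)}\le\sum_\ell u_{\ell+1}\lambda^\ell=\lambda^{-1}\sum_\ell u_\ell\lambda^\ell\le k\lambda^{k-1}$. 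Substituting and using $\lambda=p/q$, so that $p\lambda^{-2}+q\lambda^{-1}=2q^2/p$, gives $\Gamma(f_{N,k})(\zeta)\le(2q^2/p)\,k\lambda^k\le k\lambda^k$; the last inequality is equivalent to $2q^2\le p=1-q$, i.e.\ to $q\le1/2$, which holds since $p>1/2$.

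The only genuinely non-routine point is this corner count, where one must check that inside each of the two families of corners \emph{separately} the assigned up-steps are distinct and of the declared type, so that the weighted counts telescope correctly against the prefactors $\lambda^{-2}$, $\lambda^{-1}$ and against $\sum_\ell u_\ell=k$; it is harmless that one up-step of $\zeta$ may be charged once by an upward corner and once by a downward corner, since the two sums are estimated separately. Everything else --- the martingale identity and its quadratic variation, the Dirichlet-form identity at equilibrium, and the final elementary inequality --- is routine, and the bound for $\var(f_{N,k}(h_t^\vee))$ follows by replacing $\wedge$ with $\vee$ throughout.
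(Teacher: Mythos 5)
Your proof is correct and follows essentially the same route as the paper: both use the martingale $M_t=e^{\gap_N t}f_{N,k}(h_t^\zeta)$, identify the variance with an exponentially weighted integral of (the expectation of) the carr\'e-du-champ $\Gamma(f_{N,k})$, and conclude by the pointwise bound $\Gamma(f_{N,k})\le k\lambda^k$. There are two cosmetic differences. First, for $\var_{\pi_{N,k}}(f_{N,k})$ the paper simply lets $t\to\infty$ in the time-$t$ bound, while you use the Dirichlet-form identity $\gap_N\var_{\pi}(f_{N,k})=\tfrac12\E_\pi[\Gamma(f_{N,k})]$; both are one-liners and equally valid. Second, in bounding $\Gamma(f_{N,k})$ the paper crudely replaces every factor $\lambda^{\zeta(y)}$ by $\lambda^k$ and then counts corners, whereas you organize the count by height level via the up-step multiplicities $u_\ell$. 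Your version is a bit more careful: the paper's displayed inequality, read literally, uses $\sum_x\ind_{\{\Delta h\neq0\}}\le k$, which can fail (it can be as large as $2k$); the bound is nonetheless correct because the rate prefactors $p\lambda^{-2}$ and $q$ (each $\le q/p<1$, summing to $q/p$) silently absorb the factor of $2$, exactly the observation you make explicit at the end with $p\lambda^{-2}+q\lambda^{-1}=2q^2/p\le 1$. So your write-up patches a small gap in the paper's exposition without changing the underlying argument.
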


\begin{proof}[Proof of Proposition \ref{lbsb}]
Let us assume for simplicity (recall \eqref{lunoulot}) that 
\begin{equation}
f_{N,k}(\wedge)\ge \frac{1}{16} \gl^{(k-N)/2} Nk.
\end{equation}
(if not we apply the same proof to $f_{N,k}(\vee)\le -\frac{1}{16} \gl^{(k-N)/2} Nk$). By the material in Section \ref{sec:eigen}, we have 
\begin{equation}\label{cleup}
\bbE\left[ f_{N,k}(h^\wedge_t)\right] \ge  \frac{1}{16} e^{-\gap_N t} \gl^{(k-N)/2} Nk.
\end{equation}
Applying \cite[Proposition 7.12]{LevPerWil} for the probability measures $P_t^{N,k}(\wedge, \cdot)$ and $\pi_{N,k}$ and the function $f_{N,k}$ (recall that $E_{\pi_{N,k}}[f_{N,k}]=0$), we obtain that 
\begin{equation}
 \|P_t^{N,k}(\wedge, \cdot)-\pi_{N,k}  \|_{TV} \ge 1-\frac{2 \left(\var(f_{N,k}(h_t^\wedge))+\var_{\pi_{N,k}}(f_{N,k}) \right)}{\bbE\left[f_{N,k}(h_t^\wedge)\right]^2}\;.
\end{equation}
Using Lemma \ref{Lemma:BoundVar} and \eqref{cleup}, we obtain that 
 \begin{equation}
  \frac{\var(f_{N,k}(h_t^\wedge))+\var_{\pi_{N,k}}(f_{N,k})}{\bbE\left[f_{N,k}(h_t^\wedge)\right]^2}\le \frac{ 16^2 e^{2\gap_N t} \gl^N}{\gap_N N^2 k}.
 \end{equation}
Now if we apply this inequality at time  $s_{\delta} = (1-\delta) \log k / (2\gap_N)$,
then we obtain for any given $\gep > 0$ and all $N$ sufficiently large
\begin{equation}
d(t_1)\ge 1-2\frac{16^2 \gl^N }{k^{\delta}\gap_N N^2}\ge 1-\gep.
\end{equation}
where we used the small bias assumption \eqref{smallbias}. This yields $\Tm^{N,k}(\gep)\ge s_{\delta}$.
\end{proof}
\begin{proof}[Proof of Lemma \ref{lem:firstmom}]
We have
\begin{align*}
f_{N,k}(\wedge) - f_{N,k}(\vee) &= \sum_{x=1}^{N-1} \sin \left(\frac{x\pi}{N}\right) \frac{\lambda^{\frac12 \wedge(x)} - \lambda^{\frac12 \vee(x)}}{\lambda-1}\\
&\ge  \sum_{x=1}^{N-1} \sin \left(\frac{x\pi}{N}\right) \lambda^{\frac12 \vee(x)} \frac{\wedge(x)-\vee(x)}{2},
\end{align*}
where the last inequality is obtained similarly to \eqref{linex}.
Since $\vee(x)\ge k-N$ for all $x$ and $\wedge(x)-\vee(x) \ge k$ for all $x\in \{N/4,\ldots,3N/4\}$, we conclude that 
$$ \sum_{x=1}^{N-1} \sin \left(\frac{x\pi}{N}\right) \lambda^{\frac12 \vee(x)} \frac{\wedge(x)-\vee(x)}{2} \ge \frac{\sqrt 2}{2}\gl^{\frac{k-N}{2}}\frac{N k}{4}.$$
\end{proof}

\begin{proof}[Proof of Lemma \ref{Lemma:BoundVar}]
By the material in Section \ref{sec:eigen}, we know
$$ M_t := f_{N,k}(h^\wedge_t) e^{\gap_N t}$$
is a martingale. Its predictable bracket is given by
\begin{align*}
\langle M_\cdot \rangle_t &= \int_0^t \sum_{x=1}^{N-1} \lambda^{h_s^\wedge(x)} \sin\Big(\frac{\pi x}{N}\Big)^2 e^{2\gap_N s}\\
&\quad\times\Big(p_N\ind_{\{\Delta h_s^\wedge(x) < 0\}} \gl^{-2} + q_N \ind_{\{\Delta h_s^\wedge(x) > 0\}}\Big) ds\;,
\end{align*}
and $M_t^2 - \langle M_\cdot \rangle_t$ is again a martingale. This yields the identity
$$\var(f_{N,k}(h_t^\wedge)) = e^{-2\gap_N t}\E\big[\langle M_\cdot \rangle_t\big].$$
To bound the predictable bracket of $M$, let us observe that 
the number of possible particle transitions to the right and to the left (the number of sites $x$ such that $\Delta h_s^\wedge(x)<0$, resp. $>0$) is bounded by $k$, and that for any $x$ and $\zeta\in \gO_{N,k}$ we have $\lambda^{\zeta(x)} \le \lambda^{k}$. 
Therefore, we obtain the bound
\begin{align*}
\E\big[\langle M_\cdot \rangle_t\big] &\le \int_0^t e^{2\gap_N s} ds\, \lambda^k \sum_x \ind_{\{\Delta h_s^\wedge(x) \ne 0\}}\le k\lambda^k \frac{e^{2\gap_N t}}{2\gap_N}\;,
\end{align*}
which yields the asserted bound. The case of $h^{\vee}_t$ is treated in the same manner by symmetry.
Since the distribution of $h^\wedge_t$ converges to $\pi_{N,k}$ when $t$ tends to infinity we deduce that
$$ \var_{\pi_{N,k}}(f_{N,k}) = \lim_{t\rightarrow\infty} \var(f_{N,k}(h_t^\wedge))\;,$$
which allows to conclude.
\end{proof}

\section{Upper bound on the mixing time for small biases}\label{Sec:UBSB}

Until the end of the section we assume that the small bias assumption \eqref{smallbias} holds and that the different initial conditions are coupled using the monotone grand coupling $\bbP$ defined in Appendix \ref{Appendix:Coupling}. We set for all $\delta > 0$
$$t_{\delta}(N):=(1+\delta)\frac{\log k}{2\gap_N}.$$
Recall the definition of the merging time $\tau$ from \eqref{eq:mergin}.

\begin{proposition}\label{oldform}
 Assume that \eqref{smallbias} holds. We have 
 \begin{equation}\label{laforme}
\lim_{N\to \infty}  \bbP[\tau < t_{\delta}(N)]=1.
 \end{equation}
As a consequence, for every $\gep>0$ and all $N$ sufficiently large, $\Tm^{N,k}(\gep)\le t_{\delta}(N)$.
 
\end{proposition}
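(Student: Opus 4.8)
The plan is to bound the merging time $\tau$ of \eqref{eq:mergin} and conclude via \eqref{eq:zone}--\eqref{eq:ztwo}, exactly as in the large-bias case. The relevant observable is the area between the two extremal interfaces,
\begin{equation*}
\cA_t:=\sum_{x=1}^{N-1}\big(h^{\wedge}_t(x)-h^{\vee}_t(x)\big)\ge 0\;,\qquad \tau=\inf\{t\ge 0:\ \cA_t=0\}\;,
\end{equation*}
which, under the grand coupling of Appendix \ref{Appendix:Coupling}, changes only by $\pm 2$ at each update. As in Section \ref{sec:eigen}, the rescaled gap between the two $f^{(0)}_{N,k}$-values,
\begin{equation*}
M_s:=\frac{f^{(0)}_{N,k}(h^{\wedge}_{t_1+s})-f^{(0)}_{N,k}(h^{\vee}_{t_1+s})}{\delta_{\min}(f^{(0)}_{N,k})}\;,
\end{equation*}
is, for any fixed starting time $t_1$, a nonnegative supermartingale (by \eqref{eq:contract0}) with jumps of absolute value at least $1$, whose jump rate is at least $1$ as long as $\tau$ has not occurred, since up to $\tau$ the interfaces differ on some interval where $h^{\wedge}$ makes an upward and $h^{\vee}$ a downward corner. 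Using \eqref{linex} and $h^{\wedge}(x)\le k$ one also has $M_s\le \tfrac12\gl^{N/2}\cA_{t_1+s}$, and $\gl^{N/2}=k^{o(1)}$ under \eqref{smallbias}, so controlling $M_0$ is equivalent to controlling $\cA_{t_1}$.

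First I would cut $[0,t_\delta(N)]$ into two phases. In \emph{Phase 1}, of length $t_1:=(1+\tfrac\delta2)\tfrac{\log k}{2\gap_N}$, the goal is to show that $\cA_{t_1}\le a_N$ with probability tending to $1$, for some $a_N$ with $a_N\gl^{N/2}=o\!\big(\sqrt{\log k/\gap_N}\big)$ — which, since $\gap_N=\tfrac12(b_N^2+(\pi/N)^2)(1+o(1))$ by \eqref{eq:taylorgap} and \eqref{smallbias} forces $\sqrt{\log k/\gap_N}\gtrsim N/\sqrt{\log k}$, leaves room. In \emph{Phase 2}, of length $r_\delta:=\tfrac\delta2\tfrac{\log k}{2\gap_N}$, I would apply Proposition \ref{prop:solskjaer}-(i) to $(M_s)_{s\ge 0}$: conditionally on $M_0\le a:=\tfrac12\gl^{N/2}a_N$, inequality \eqref{lopes} gives $\bbP[\tau>t_1+r_\delta\mid M_0\le a]\le 4a\,r_\delta^{-1/2}$, and since $r_\delta^{-1/2}$ is a constant times $\sqrt{\gap_N/\log k}$ this tends to $0$ by the choice of $a_N$; combined with $\bbP[M_0>a]\le\bbP[\cA_{t_1}>a_N]\to 0$ this yields $\bbP[\tau<t_\delta(N)]\to 1$, hence $d^{N,k}(t_\delta(N))\to 0$ and $\Tm^{N,k}(\gep)\le t_\delta(N)$ for $N$ large.

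The hard part will be Phase 1 — the careful analysis of the area. The exact contraction \eqref{eq:contract1} gives $\bbE[f_{N,k}(h^{\wedge}_t)-f_{N,k}(h^{\vee}_t)]=e^{-\gap_N t}\big(f_{N,k}(\wedge)-f_{N,k}(\vee)\big)$, and Lemma \ref{Lemma:BoundVar} controls the associated variance, but feeding these estimates into $\cA_t$ through a global Markov inequality is far too lossy: the minimal increment $\delta_{\min}(f_{N,k})$ is smaller than the typical equilibrium fluctuation of $f_{N,k}$ by a factor of order $N\sqrt k$, while the total initial area $\cA_0$ is of order $kN$ rather than $k$, so such a bound only controls $\cA_{t_1}$ up to a spurious factor $N^2$ — useless when $k$ grows slowly. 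Following \cite{Lac16}, the point will be to combine the contraction estimates with Proposition \ref{prop:lidro} to reach macroscopic equilibrium within a time $o(\Tm^{N,k})$, and then to argue that from that moment on the two interfaces differ only on a collection of ``bumps'' of bounded height located in regions which, by Proposition \ref{lem:dens}, carry a particle density of order $k^{1+o(1)}/N$ — so that no atypically large, slowly relaxing bump can form — and that the area carried by these bumps relaxes with the \emph{sharp} exponential rate $e^{-\gap_N t}$. Making this rigorous requires tracking the locations of the discrepancies between $h^{\wedge}$ and $h^{\vee}$ and how they annihilate under the explicit grand coupling of Appendix \ref{Appendix:Coupling}, rather than monitoring a single global functional; this is the analogue here of the central argument of \cite{Lac16}, and where essentially all the work lies.

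As a consistency check on the constant $\tfrac12$: Phase 1 must genuinely last $\approx\tfrac{\log k}{2\gap_N}$ because a single localised discrepancy survives up to time $t$ with probability of order $e^{-2\gap_N t}$ — the spectral gap enters squared, through a second-moment computation of the type of Lemma \ref{Lemma:BoundVar} — and there are of order $k$ discrepancies to eliminate, so a union bound produces survival of all of them up to time $\tfrac{\log k}{2\gap_N}$, exactly matching the lower bound of Proposition \ref{lbsb}; Phase 2 only costs a further $\delta$-fraction of this.
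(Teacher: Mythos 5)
Your overall scaffolding matches the paper's: compare the extremal processes via the weighted area $M_s=\delta_{\min}(f^{(0)}_{N,k})^{-1}\big(f^{(0)}_{N,k}(h^{\wedge}_{\cdot})-f^{(0)}_{N,k}(h^{\vee}_{\cdot})\big)$, first contract it down to time $(1+\delta/2)\tfrac{\log k}{2\gap_N}$ using the spectral estimates, then finish with a diffusive argument based on Proposition~\ref{prop:solskjaer}. The problem is the target you set for Phase~1. From \eqref{eq:contract1} and Lemma~\ref{lem:firstmom} the mean of $M$ at time $t_1$ is of order $Nk^{1/2-\delta/4}$ (and this is essentially tight; it cannot be driven much lower in that amount of time), whereas the condition you impose, $a_N\gl^{N/2}=o(\sqrt{\log k/\gap_N})$, demands $M_0=o(N/\sqrt{\log k})$ up to $k^{o(1)}$ factors. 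These differ by a factor growing like $k^{1/2}$, so Phase~1 as you have stated it is simply unachievable, and the ``room'' you see is illusory: if one enters Phase~2 with $M_0\asymp Nk^{1/2}$ and only the crude bracket rate $\partial_t\langle M_\cdot\rangle\ge 1$, the remaining time $r_\delta\asymp \log k/\gap_N$ is nowhere near enough to kill $M$ via \eqref{lopes}.

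The missing idea, which is the crux of the paper's proof (Lemma~\ref{Lemma:BrackDeriv} together with Propositions~\ref{lem:dens} and~\ref{Prop:BoundMax0} and the multiscale stopping times $\cT_i$), is a much sharper \emph{lower} bound on the bracket rate: $\partial_t\langle A_\cdot\rangle\gtrsim A_t/(\bH(t)\,Q(h^{\pi}_t))$, which is of order $k^{1-o(1)}$ when $A_t\asymp k^{1/2}N$, $\bH\asymp k^{1/2+o(1)}$, $Q(h^\pi_t)\asymp Nk^{-1+o(1)}$. With the bracket accumulating at rate $\sim k$ rather than $\sim 1$, the time needed to take $A$ from $k^{1/2}N$ to $0$ is only $O((k^{1/2}N)^2/k)=O(N^2)\asymp\gap_N^{-1}$, which is indeed $o(\log k/\gap_N)$. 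This is why the paper works with $A_t$ taken between $h^\wedge_t$ (resp.\ $h^\vee_t$) and the \emph{stationary} $h^\pi_t$ rather than between $h^\wedge_t$ and $h^\vee_t$ directly: controlling $Q(h_t^{\pi})$ requires the equilibrium density/gap estimate of Proposition~\ref{lem:dens}, and no analogous control is available for $Q(h^\wedge_t)$ or $Q(h^\vee_t)$ out of equilibrium. Your "bump annihilation" heuristic points in roughly the right direction, but without the quantitative bracket-rate bound and the associated multiscale bookkeeping via the $\cT_i$ (which allows one to apply Proposition~\ref{prop:solskjaer}-(ii) at each scale, not \ref{prop:solskjaer}-(i) globally), the timescales do not close.
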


Recall (see the paragraph after \eqref{eq:grand}) that $h^{\pi}_t$ denotes the chain with stationary initial condition.
For practical reasons, it is simpler to couple two processes when at least one of them is at equilibrium.
We thus prove \eqref{laforme} by showing that $\lim_{N\to \infty}\bbP[\tau_i < t_{\delta}(N)]=1$ for $i\in \{1,2\}$ where 
\begin{equation}
 \tau_1:=\inf\{ t>0 \ : \  h^{\wedge}_t=h^{\pi}_t \}\quad \text{ and }\quad \tau_2:=\inf\{ t>0 \ : \  h^{\vee}_t=h^{\pi}_t \}. 
\end{equation}
The argument being completely symmetric, we focus only on $\tau_1$.
As in Sections \ref{preuv1} and \ref{preuv2}, we interpret $\tau_1$ as the time
at which the weighted area $A_t$ between the maximal and equilibrium interface vanishes
\begin{equation}\label{defAt}
A_{t}:= \frac{f^{(0)}_{N,k}(h^{\wedge}_t)-f^{(0)}_{N,k}(h^{\pi}_t)}{\delta_{\min}(f^{(0)}_{N,k})}
=\gl^{\frac{N-k}{2}}\sum_{x=1}^{N-1} \frac{\lambda^{\frac12 h_t^\wedge(x)}-\lambda^{\frac12 h_t^\pi(x)}}{\lambda - 1} \;.
\end{equation}
A simple computation based on the identity \eqref{Eq:V} shows that $A$ is a supermartingale.

\medskip

While in the large bias case (Section \ref{Sec:UBLB}) the choice of the grand coupling does not matter, here it is crucial to use a coupling which maximizes in a certain sense the fluctuation of the weighted area $A_t$, so that this process reaches zero as quickly as possible.
The coupling defined in Appendix \ref{Appendix:Coupling} makes the transitions for the two processes $h^{\wedge}$ and $h^{\pi}$ as independent as possible (some transitions must occur simultaneously for the two processes in order to preserve monotonicity). 

\medskip

We consider $\eta >0$ small and introduce the successive stopping times $\cT_i$ by setting
$$\cT_0 := \inf\big\{t\ge t_{\delta/2}: A_t \le k^{\frac12 - \frac \gd 5} N\big\}\;,$$
and
$$\cT_{i} := \inf\big\{t\ge \cT_{i-1}: A_t \le k^{\frac12 - i\eta - \frac \gd 5} N\big\}\;,\quad i\ge 1\;.$$
We also set for coherence $\cT_{\infty}:=\max(\tau_1,t_{\delta/2})$ the first time at which $A_t$ reaches $0$. Notice that some of these stopping times may be equal to $t_{\delta/2}$.\\
Set $T_N:=\min( b^{-2}_N, N^2)$. To prove Proposition \ref{oldform}, we show first  that $A_t$ shrinks to 
$k^{\frac12-\frac \gd 5} N$ by time $t_{\delta/2}$ and then that it only needs an extra time $2T_N$ to reach $0$. The second step is performed by controlling each increment 
$\gD \cT_i:=\cT_i-\cT_{i-1}$ separately for each $i$ smaller than some threshold $K:=\lceil 1/(2\eta)\rceil$.

\begin{lemma}\label{newform}
Given $\delta$, if $\eta$ is chosen small enough and $K:=\lceil 1/(2\eta)\rceil$, we have 
$$ \lim_{N\to \infty}\P\left( \{ \cT_{0} = t_{\delta/2}\}\cap \left(\bigcap_{i=1}^K 
\{\gD\cT_{i}\le 2^{-i} T_N\}\right)\cap\{\cT_{\infty}-\cT_{K}\le T_N \}\right)=1 \;.$$
\end{lemma}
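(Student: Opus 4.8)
The plan is to prove separately that each of the three events in the intersection holds with probability tending to $1$, and then conclude by a union bound; on the intersection one gets $\tau_1=\cT_{\infty}\le t_{\delta/2}(N)+\sum_{i=1}^{K}2^{-i}T_N+T_N\le t_{\delta/2}(N)+2T_N$, and since $T_N=\min(b_N^{-2},N^2)\asymp\gap_N^{-1}$ by \eqref{eq:taylorgap} while $k\to\infty$ forces $2T_N=o(\gap_N^{-1}\log k)$, this is $\le t_{\delta}(N)$ for large $N$, which is exactly Proposition \ref{oldform}. For the first event, $\{\cT_{0}=t_{\delta/2}\}$ just says $A_{t_{\delta/2}}\le k^{\frac12-\frac\gd5}N$, and I would deduce it from Markov's inequality after bounding $\bbE[A_{t_{\delta/2}}]$. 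Since $h^{\vee}_t\le h^{\pi}_t$ under the grand coupling and $f^{(0)}_{N,k}$ is increasing, $A_t\le\bar A_t:=\delta_{\min}(f^{(0)}_{N,k})^{-1}(f^{(0)}_{N,k}(h^{\wedge}_t)-f^{(0)}_{N,k}(h^{\vee}_t))$, so it suffices to bound $\bbE[\bar A_{t_{\delta/2}}]$. Taking expectations and using \eqref{Eq:V}, the map $x\mapsto\bbE[\gl^{h^{\wedge}_t(x)/2}]-\bbE[\gl^{h^{\vee}_t(x)/2}]$ solves the discrete heat equation \eqref{Eq:V} with a nonnegative initial datum; expanding it in the sine basis and summing over $x$ (only odd modes survive, since $\sum_{x=1}^{N-1}\sin(jx\pi/N)=\cot(j\pi/2N)$) gives
\[
\bbE[\bar A_t]\le\frac{4}{\pi}\,\bar A_0\sum_{j\text{ odd}}\frac{e^{-\gamma_j t}}{j}\,.
\]
The $j=1$ term decays at the gap rate $\gamma_1=\gap_N$, while the higher modes contribute only a factor $k^{o(1)}$ at time $t_{\delta/2}$ (using $\gamma_j-\gamma_1$ of order $(j^2-1)/N^2$ for $j\le N/2$, the modes $j>N/2$ being negligible since $t_{\delta/2}\to\infty$, and $t_{\delta/2}\asymp N^2$ up to a $\mathrm{polylog}(k)$ because $\gap_N\asymp b_N^2+N^{-2}$ and $b_N=o(\log k/N)$). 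As $e^{-\gap_N t_{\delta/2}}=k^{-(1+\delta/2)/2}$ and, via $\frac{\gl^a-\gl^b}{\gl-1}\le\gl^{a\vee0}(a-b)$ together with $\gl^{N/2}=e^{O(Nb_N)}=k^{o(1)}$ and $\sum_x(\wedge(x)-\vee(x))\lesssim Nk$, one has $\bar A_0\le Nk^{1+o(1)}$, this yields $\bbE[\bar A_{t_{\delta/2}}]\le Nk^{\frac12-\frac\gd4+o(1)}=o(Nk^{\frac12-\frac\gd5})$.

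For the two remaining events I would use that $A_t$ is a nonnegative supermartingale and invoke the diffusion estimates of Appendix \ref{sec:diffu} (Proposition \ref{prop:solskjaer}). Up to the merging time, the jumps of $A_t$ are of size $\ge1$ (this is the point of normalising by $\delta_{\min}$), and whenever $A_t>0$ the interfaces $h^{\wedge},h^{\pi}$ differ on an interval, which both forces a transition changing $A_t$ at rate bounded below (as in Section \ref{preuv2}) and---under the coupling of Appendix \ref{Appendix:Coupling}, where $h^{\wedge}$ and $h^{\pi}$ are updated independently wherever monotonicity permits---forces the predictable bracket of $A$ to grow at rate at least $c\,A_t\,k^{-o(1)}$. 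For $\{\cT_\infty-\cT_K\le T_N\}$ one has $A_{\cT_K}\le k^{\frac12-K\eta-\frac\gd5}N\le k^{-\frac\gd5}N$ (as $K\eta\ge1/2$), and the bound $\bbP[A_{\cT_K+r}>0\mid A_{\cT_K}\le a]\le 4a\,r^{-1/2}$ with $a=k^{-\gd/5}N$ and $r=T_N$ is $\lesssim k^{-\gd/5}N\,T_N^{-1/2}\to0$ in either case $T_N=N^2$ or $T_N=b_N^{-2}$ (using $Nb_N\to0$). For $\{\gD\cT_i\le2^{-i}T_N\}$ I would apply the analogous estimate to $A$ restarted at $\cT_{i-1}$: from $A_{\cT_{i-1}}\le a_{i-1}:=k^{\frac12-(i-1)\eta-\frac\gd5}N$, the bracket lower bound (valid while $A_t\in(a_{i},a_{i-1}]$, with $a_i:=a_{i-1}k^{-\eta}$) drives $A_t$ below $a_i$ within time of order $a_{i-1}k^{\eta+o(1)}$, Doob's maximal inequality forbidding an excursion above $2a_{i-1}$ in the meantime; with $\eta<\gd/5$ one checks $a_{i-1}k^{\eta+o(1)}\le k^{\frac12+o(1)}N$, and the small bias assumption gives $k^{\frac12+o(1)}N=o(2^{-K}T_N)\le o(2^{-i}T_N)$ (treating $T_N=N^2$, i.e.\ $N\gg2^{K}k^{\frac12+o(1)}$, and $T_N=b_N^{-2}$, where it follows from $k^{1/2}Nb_N^2\to0$).

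The genuinely delicate step is the descent, i.e.\ the events $\{\gD\cT_i\le2^{-i}T_N\}$: negative drift alone is far too weak here (the function $f^{(0)}_{N,k}$ contracts only at the slow rate $\varrho$, and $\varrho T_N$ stays bounded), so one must show that $A_t$ \emph{fluctuates} enough to be dragged down scale by scale. This needs a quantitative lower bound on the predictable bracket of $A_t$ in terms of $A_t$ itself, which is precisely where the tailor-made grand coupling of Appendix \ref{Appendix:Coupling}---engineered so that $h^{\wedge}$ and $h^{\pi}$ move as independently as the monotone ordering permits---is indispensable; a generic monotone coupling would not suffice.
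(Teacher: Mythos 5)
Your overall strategy coincides with the paper's: decompose into the three sub-events, control $\{\cT_0 = t_{\delta/2}\}$ with a first-moment bound on $A_{t_{\delta/2}}$ (your Fourier-mode sum is a harmless variant of the Cauchy--Schwarz argument in Lemma~\ref{Lemma:T0}), and control the remaining stopping-time increments via the diffusion estimates of Appendix~\ref{sec:diffu} applied to the predictable bracket of $A$; your treatment of $\cT_\infty-\cT_K$ matches the paper's exactly. However, the step $\{\Delta\cT_i\le 2^{-i}T_N\}$ for $1\le i\le K$ — which is the real content of the lemma — contains a genuine gap. The predictable-bracket rate you claim, $\partial_t\langle A\rangle \ge c\,A_t\,k^{-o(1)}$, is not achievable. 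What Lemma~\ref{Lemma:BrackDeriv}, together with Propositions~\ref{Prop:BoundMax0} and~\ref{lem:dens}, actually gives on the good event is
$$\partial_t\langle A\rangle \;\ge\; \frac{A_t}{6\,\bH(t)\,Q(h^\pi_t)}\;\gtrsim\; \frac{A_t\,k^{1/2-\delta/40}}{N}\;,$$
with a factor of $N^{-1}$ coming from $Q(h^\pi_t)\approx N/k$: only the corners of $h^\pi$ inside the region $\{h^\wedge>h^\pi\}$ contribute, and their number is at most $|\cC_t|/Q$, not $|\cC_t|$. Since $k\le N/2$, the factor $k^{1/2}/N$ is at most $k^{-1/2}$ (and is $N^{-1/2+o(1)}$ at positive density), so it is not a $k^{o(1)}$-correction. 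Your per-step time bound $a_{i-1}k^{\eta+o(1)}$ is therefore too small by roughly $N/k^{1/2}$; the correct bound is of order $N^2\,k^{-(i-2)\eta-7\delta/40}$, which still fits under $2^{-i}T_N$ but only after imposing $\eta$ small compared to $\delta$ — a constraint your (incorrect) rate bound conceals.

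The deeper omission is that the bracket lower bound is not a deterministic consequence of $A_t\in(a_i,a_{i-1}]$, as your phrase ``valid while $A_t\in(a_i,a_{i-1}]$'' suggests: it holds only when both $\bH(t)$ and $Q(h^\pi_t)$ are below their thresholds, and these are random events in $t$ that the pointwise Propositions~\ref{lem:dens} and~\ref{Prop:BoundMax0} control only at each fixed time. The paper's proof introduces the event $\cH_N$ that the Lebesgue measure of bad times inside $[t_{\delta/2},\,t_{\delta/2}+T_N]$ is at most $2^{-(K+1)}T_N$ (obtained via Markov applied to the integrated pointwise bounds), and then argues by induction and contradiction: on $\cH_N\cap\cA_N\cap\{\cT_0\le t_{\delta/2}\}$, if $\Delta\cT_i>2^{-i}T_N$ while the earlier increments are controlled, then $[\cT_{i-1},\cT_{i-1}+2^{-i}T_N]$ lies inside the window and must contain at least $2^{-(K+1)}T_N$ of good time, on which the bracket grows fast enough to violate the $\cA_N$ upper bound on $\Delta_i\langle A\rangle$. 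Without this measure-of-bad-times mechanism your argument has no means of integrating the (conditional, state-dependent) bracket lower bound over a positive fraction of the time interval, so the scale-by-scale descent does not actually follow.
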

\noindent Note that on the event defined in the lemma and for all $N$ large enough, we have
$$\tau_1\le \cT_{\infty}\le t_{\delta/2}+2T_N\le t_{\delta}.$$
Hence Proposition \ref{oldform} follows as a direct consequence. 

\medskip

The bound on $\cT_0$ is  proved in Section \ref{s:contract}, while that of on $\cT_{\infty}-\cT_{K}$ follows from Lemma \ref{Lemma:AN} in Section \ref{s:dif},
the case of the other increments is more delicate and is detailed in Section \ref{s:inter}.

\subsection{Contraction estimates}\label{s:contract}
The approach used in the first step bears some similarity with the one used in Section \ref{preuv1}, the notable difference being that 
\eqref{eq:contract0} is not sufficient here and we must work a bit more to show that $\bbE[A_t]$ decays with rate $\gap_N$.

\begin{lemma}\label{Lemma:T0}
Given $\delta>0$ we have $\P\big( \cT_0 > t_{\delta/2}\big) \to 0$ as $N\to\infty$.
\end{lemma}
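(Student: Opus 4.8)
I would prove this by a first‑moment estimate. Since $A$ is a pure‑jump process, $\{\cT_0>t_{\gd/2}\}=\{A_{t_{\gd/2}}>k^{\frac12-\frac\gd5}N\}$, so by Markov's inequality it suffices to show $\bbE[A_{t_{\gd/2}}]=o\big(k^{\frac12-\frac\gd5}N\big)$. Put $w(t,x):=\bbE[\gl^{\frac12 h^{\wedge}_t(x)}]-a_{N,k}(x)$. By \eqref{Eq:V}, $w$ solves $\partial_t w=(\sqrt{pq}\,\gD-\varrho)w$ with $w(t,0)=w(t,N)=0$; since $\sqrt{pq}\,\gD-\varrho$ has strictly negative spectrum under Dirichlet conditions while $\bbE[\gl^{\frac12 h^\wedge_t(x)}]\to\bbE_{\pi_{N,k}}[\gl^{\frac12 h(x)}]$ as $t\to\infty$, one gets $a_{N,k}(x)=\bbE_{\pi_{N,k}}[\gl^{\frac12 h(x)}]$. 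In particular $\bbE_{\pi_{N,k}}[f^{(0)}_{N,k}]=0$, so $\bbE[A_t]=\frac{\gl^{(N-k)/2}}{\gl-1}\sum_{x=1}^{N-1}w(t,x)$; moreover $w(t,\cdot)\ge 0$ since $h^\wedge_t\ge h^\pi_t$ pointwise under the coupling and $h^\pi_t\sim\pi_{N,k}$.

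Next I would diagonalise. Writing $w(0,\cdot)=\sum_{j=1}^{N-1}c_j\sin(\tfrac{j\,\cdot\,\pi}{N})$, we have $w(t,x)=\sum_j c_j e^{-\gamma_j t}\sin(\tfrac{jx\pi}{N})$ with $\gamma_j=\varrho+4\sqrt{p_Nq_N}\sin^2(\tfrac{j\pi}{2N})$ and $\gamma_1=\gap_N$. Using $\sum_{x=1}^{N-1}\sin(\tfrac{jx\pi}{N})=\cot(\tfrac{j\pi}{2N})\ind_{\{j\ \mathrm{odd}\}}$, $|c_j|\le\frac2N\sum_y w(0,y)$, and $0\le\cot(\tfrac{j\pi}{2N})\le N/j$ for $1\le j\le N-1$, one obtains
\begin{equation*}
\sum_{x=1}^{N-1}w(t,x)\ \le\ 2\Big(\sum_{y}w(0,y)\Big)e^{-\gap_N t}\sum_{j\ \mathrm{odd}}\frac1j\,e^{-(\gamma_j-\gap_N)t}\,.
\end{equation*}
The heart of the matter is to show $\sum_{j\ \mathrm{odd}}\frac1j e^{-(\gamma_j-\gap_N)t_{\gd/2}}\le\log\log k$ for $N$ large. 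This uses two inputs: first, $\gamma_j-\gap_N\ge j^2/(2N^2)$ for $3\le j\le N-1$ (from $\sin\theta\ge\frac2\pi\theta$ on $[0,\tfrac\pi2]$ and $\sqrt{p_Nq_N}\to\tfrac12$); second, the small‑bias bound $\gap_N N^2\le\varrho N^2+\tfrac{\pi^2}{2}\le(Nb_N)^2+\tfrac{\pi^2}{2}=o((\log k)^2)$, which gives $\beta:=\frac{t_{\gd/2}}{2N^2}=\frac{(1+\gd/2)\log k}{4\gap_N N^2}\ge\frac{c_\gd}{\log k}$. A direct estimate of $\sum_{j\ge3}j^{-1}e^{-\beta j^2}$ — splitting the sum at $j\sim1/\sqrt\beta$ — is then of order $\log(1/\beta)\le\log\log k+O(1)$.

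Finally I would use the crude bound $\bbE[A_0]=\gl^{(N-k)/2}f^{(0)}_{N,k}(\wedge)\le kN\gl^N=k^{1+o(1)}N$, which follows from $\wedge(x)\le k$, $a_{N,k}\ge0$, the inequality $\frac{\gl^{m/2}-\gl^{-m/2}}{\gl-1}\le m\gl^{m/2}$, the explicit form of $\gl^{\frac12\wedge(x)}-a_{N,k}(x)$ on $\{x\le k\}$ and $\{x\ge k\}$, and $\gl^N=e^{O(Nb_N)}=k^{o(1)}$. Combining the three displays,
\begin{equation*}
\bbE[A_{t_{\gd/2}}]\ \le\ k^{o(1)}\,e^{-\gap_N t_{\gd/2}}\,\bbE[A_0]\ =\ k^{o(1)}\cdot k^{-\frac12-\frac\gd4}\cdot k^{1+o(1)}N\ =\ k^{\frac12-\frac\gd4+o(1)}N\,,
\end{equation*}
and Markov's inequality gives $\bbP(\cT_0>t_{\gd/2})\le k^{-\frac\gd{20}+o(1)}\to0$.

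The step I expect to fight hardest is the tail‑mode estimate of the second paragraph: the contraction \eqref{eq:contract0} decays only at the slower rate $\varrho$, and $f^{(0)}_{N,k}$ is not an eigenfunction, so \eqref{eq:contract1} does not apply directly; a naive bound on $\sum_x w(t,x)$ picks up a factor $\log N$ from summing $\cot(\tfrac{j\pi}{2N})$, which would be fatal whenever $k$ grows slowly enough that $\log N\not\ll k^{\gd/20}$. Extracting instead the factor $e^{-(\gamma_j-\gap_N)t}$ and exploiting that at time $t_{\gd/2}$ — even when it is far below the diffusive scale $N^2$, as happens in the regime $1/N\ll b_N\ll\log k/N$ — one still has $t_{\gd/2}/N^2\gtrsim1/\log k$, is precisely what turns the harmful $\log N$ into the harmless $\log\log k=k^{o(1)}$.
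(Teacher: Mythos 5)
Your proof is correct, and it reaches the same first‑moment estimate $\bbE[A_{t_{\delta/2}}]\le k^{\frac12-\frac\delta4+o(1)}N$ that the paper uses, but by a genuinely different route. The paper sidesteps the tail‑mode analysis entirely: it bounds the $\ell^2$-norm of $a(t,\cdot)$ by $e^{-2\gap_Nt}$ times the initial $\ell^2$-norm (trivially, since every eigenvalue of $\sqrt{pq}\,\gD-\varrho$ under Dirichlet boundary conditions is $\le -\gap_N$), then converts to $\ell^1$ by Cauchy--Schwarz, picking up a factor $\sqrt N$; combined with the pointwise bound $|a(0,x)|\le k\gl^{k/2}$ this yields $\bbE[A_t]\le 2 e^{-\gap_Nt}Nk\gl^{N/2}$ in two lines, with no need to sum over modes. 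You instead decompose $w(t,\cdot)$ on the sine basis, estimate $\sum_{x}\sin(jx\pi/N)\le N/j$, and must then control $\sum_{j}\frac1j e^{-(\gamma_j-\gap_N)t_{\delta/2}}$. You correctly identify that a naive bound would give a fatal $\log N$, and that the saving observation is $\gap_N N^2 = o((\log k)^2)$ (a consequence of the small‑bias hypothesis), which forces $t_{\delta/2}/N^2\gtrsim 1/\log k$ and turns $\log N$ into $\log\log k=k^{o(1)}$. Your argument is sound, but it is doing by hand what Cauchy--Schwarz does for free: the spectral‑gap decay is uniform in $\ell^2$, and the only price of passing to $\ell^1$ is a polynomial‑in‑$N$ factor, which, like $\gl^N$ and $\log\log k$, gets absorbed into $k^{o(1)}$. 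One small inefficiency: you bound $\bbE[A_0]\le kN\gl^N$, which is a factor $\gl^{N/2}$ weaker than the natural $kN\gl^{N/2}$; it is harmless here but worth tightening.
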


\begin{proof}
 Note that $a(t,x):= \bbE\left[\frac{\lambda^{\frac12 h_t^\wedge(x)}-\lambda^{\frac12 h_t^\pi(x)}}{\lambda - 1} \right]$
 is a solution of the equation
 $$\partial_t a = (\sqrt{pq}\, \gD-\varrho)a\;,$$
 with $a(t,0)=a(t,N)=0$. Diagonalising the operator on the right hand side, see Subsection \ref{sec:eigen}, 
 we get the following bound on the $\ell^2$-norm of the solution:
 $$\sum_{x=1}^{N-1}a(t,x)^2 \le e^{-2\gap_N t}\sum_{x=1}^{N-1}a(0,x)^2,$$
 and using Cauchy-Schwartz inequality
 we obtain 
 \begin{equation}
 \gl^{\frac{k-N}{2}}\bbE[A_t] = \sum_{x=1}^{N-1}a(t,x)\le \sqrt{N} e^{-\gap_N t}\sqrt{\sum_{x=1}^{N-1}a(0,x)^2}\le  2e^{-\gap_N t} Nk \gl^{k/2}\;.
 \end{equation}
 Since $\lambda^{N/2}$ is, by the small bias assumption, asymptotically smaller than any power of $k$, Markov's inequality concludes the proof.
\end{proof}

\subsection{Diffusion estimate after time $t_{\delta/2}$}\label{s:dif}

Now this part is much more delicate than Section \ref{preuv2}. The reason being that since $T_N$ is extremely close to $t_{\delta}$,
we need very accurate control on the derivative of the predictable bracket of $A_t$.  
Our first task is to use Proposition \ref{prop:solskjaer} in order to control the increment of the bracket of $A$ in between the $\cT_i$'s.
Let us set 
\begin{equation}
\Delta_i \langle A\rangle := \langle A_\cdot\rangle_{\cT_{i}} - \langle A_\cdot\rangle_{\cT_{i-1}},\qquad\Delta_{\infty} \langle A\rangle := \langle A_\cdot\rangle_{\cT_{\infty}} - \langle A_\cdot\rangle_{\cT_{K}}\;,
\end{equation}

and consider the event
$$\cA_N:= \left\{ \forall i \in \lint 1,K \rint, \quad    \Delta_i \langle A\rangle \le k^{1 - 2(i-1)\eta - \frac \gd 4} N^2  \right\}\cap 
\left\{ \Delta_{\infty} \langle A\rangle \le T_N  \right\}$$

\begin{lemma}\label{Lemma:AN}
 We have $\lim_{N\to \infty} \P[\cA_N^{\cc}]=0$.
\end{lemma}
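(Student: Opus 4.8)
The plan is to follow $\langle A\rangle$ through the successive levels $\ell_i:=k^{\frac12-i\eta-\frac\gd5}N$ at which the stopping times $\cT_i$ are defined, and on each excursion interval $[\cT_{i-1},\cT_i)$ to reduce the control of the corresponding bracket increment to a supermartingale estimate (Proposition \ref{prop:solskjaer}), whose input is a sufficiently sharp pointwise bound on the derivative of $\langle A\rangle$. Since $A$ is a pure-jump process, $\frac{\dd}{\dd s}\langle A\rangle_s$ is the sum, over the corner flips available at time $s$, of (the jump of $A$)$^2$ times the corresponding rate. A flip at a site $x$ changes $A_s$ by a quantity of modulus $\gl^{(N-k+h^\wedge_s(x))/2}$ (or the same with $h^\pi_s$ in place of $h^\wedge_s$) up to a bounded factor, and it produces a genuine jump of $A$ only at sites lying in, or adjacent to, the discrepancy set $\mathcal D_s:=\{x:\ h^\wedge_s(x)>h^\pi_s(x)\}$. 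On $\mathcal D_s$ one has $\frac{\gl^{h^\wedge_s(x)/2}-\gl^{h^\pi_s(x)/2}}{\gl-1}\ge\gl^{-1}\gl^{h^\wedge_s(x)/2}$, so the sum of the jump sizes is at most $C A_s$, while each jump size is at most $C\max_{x\in\mathcal D_s}\gl^{(N-k+h^\wedge_s(x))/2}$. Bounding a sum of squares by the maximum times the sum, we obtain for $s<\tau_1$ (and $\langle A\rangle$ being constant afterwards)
\begin{equation}\label{eq:bbound}
\frac{\dd}{\dd s}\langle A\rangle_s\ \le\ C\,A_s\max_{x\in\mathcal D_s}\gl^{(N-k+h^\wedge_s(x))/2}\ \le\ C\,A_s^2 .
\end{equation}
It is the first, finer, form of \eqref{eq:bbound} that matters: after time $t_{\delta/2}$ both $h^\wedge$ and $h^\pi$ are close to equilibrium, and the equilibrium estimates of Subsection \ref{Subsec:eqSmallBias} (in particular Proposition \ref{lem:dens}) can be used to show that, on an auxiliary event of probability $1-o(1)$, the discrepancies between the two interfaces live low in the height function, so that $\max_{x\in\mathcal D_s}\gl^{(N-k+h^\wedge_s(x))/2}=k^{o(1)}$ uniformly over $s\ge t_{\delta/2}$; restricting to this event is harmless for the conclusion $\P[\cA_N^{\cc}]\to0$.

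To control the increments themselves, recall that $A$ is a non-negative supermartingale whose jumps have modulus at least $1$ and whose jump rate is at least $1$ as long as $A_s>0$ (on $\mathcal D_s$ the interface $h^\wedge$ makes an upward corner and $h^\pi$ a downward one). Applying Proposition \ref{prop:solskjaer} to $A$ restarted at time $\cT_{i-1}$ with absorbing level $\ell_i$, together with \eqref{eq:bbound}, gives a bound of the form
$$ \E\big[\Delta_i\langle A\rangle\ \big|\ \cF_{\cT_{i-1}}\big]\ \le\ C\,A_{\cT_{i-1}}^2\ \le\ C\,\ell_{i-1}^2\ =\ C\,k^{1-2(i-1)\eta-\frac{2\gd}{5}}N^2, $$
and, since $K\eta\ge\frac12$ forces $A_{\cT_K}\le\ell_K\le k^{-\gd/5}N$, in the same way
$$ \E\big[\Delta_\infty\langle A\rangle\ \big|\ \cF_{\cT_K}\big]\ \le\ C\,\ell_K^2\ \le\ C\,k^{-\frac{2\gd}{5}}N^2. $$
By Markov's inequality, $\P\big(\Delta_i\langle A\rangle>k^{1-2(i-1)\eta-\frac\gd4}N^2\big)\le C\,k^{-\frac{3\gd}{20}}$ for each $i\in\lint1,K\rint$, while $\P(\Delta_\infty\langle A\rangle>T_N)\le C\,k^{-2\gd/5}N^2/T_N\to0$; the latter holds because the small-bias assumption forces $T_N=\min(b_N^{-2},N^2)\gg k^{-2\gd/5}N^2$ (indeed $(Nb_N)^2\ll(\log k)^2\ll k^{2\gd/5}$, and $k^{-2\gd/5}\to0$). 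Summing the first bound over the bounded number $K$ of indices and adding the last estimate gives $\P[\cA_N^{\cc}]\to0$.

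The main obstacle, and the reason Section \ref{s:dif} relies on the diffusion estimates of Appendix \ref{sec:diffu}, is precisely that one cannot afford to bound $\Delta_i\langle A\rangle$ by the supremum of the bracket derivative times the length of $[\cT_{i-1},\cT_i)$: that length is of the order of the relaxation time $\gap_N^{-1}\asymp T_N$, which is far too large for the product to be useful. Proposition \ref{prop:solskjaer} is what lets one control the accumulated bracket solely in terms of the square of the entry value $\ell_{i-1}$, without any reference to the duration of the excursion; and the sharp form \eqref{eq:bbound}, combined with the equilibrium bound of Proposition \ref{lem:dens} keeping the jump sizes of $A$ at size $k^{o(1)}$ after time $t_{\delta/2}$, is what makes that control strong enough to close the argument.
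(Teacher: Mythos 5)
There is a genuine gap at the center of your argument: the claimed bound
$\E\big[\Delta_i\langle A\rangle \mid \cF_{\cT_{i-1}}\big] \le C\, A_{\cT_{i-1}}^2$
does not follow from Proposition \ref{prop:solskjaer}, and is in fact false for general nonnegative supermartingales of this type. Proposition \ref{prop:solskjaer}-(ii) only gives a tail bound $\P[\Delta_i\langle A\rangle \ge (a-b)^2 u]\le 8 u^{-1/2}$, and $u^{-1/2}$ is not integrable, so no expectation bound can be extracted from it. Nor can \eqref{eq:bbound} rescue the claim: for a simple random walk $A$ started at $a$ and absorbed at $0$, one has $\partial_s\langle A\rangle_s\equiv 1\le C A_s$ (the refined form of your bound) and yet $\E[\langle A\rangle_{\tau}]=\E[\tau]=\infty$. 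A similar non-integrability appears in your own second display involving $\Delta_\infty\langle A\rangle$. Consequently the Markov-inequality step and the final estimates do not go through. The paper avoids all this by \emph{not} passing through an expectation: it applies Proposition \ref{prop:solskjaer}-(ii) directly, choosing $a=\ell_{i-1}$, $b=\ell_i$, with the threshold $(a-b)^2 u=k^{1-2(i-1)\eta-\delta/4}N^2$ so that $u=k^{3\delta/20+o(1)}$, giving $\P[\Delta_i\langle A\rangle\ge k^{1-2(i-1)\eta-\delta/4}N^2]\le 8u^{-1/2}\le k^{-\delta/100}$; for the last increment it takes $a=k^{-\delta/5}N$, $b=0$, $(a-b)^2 u=T_N$, and the resulting bound $8N k^{-\delta/5} T_N^{-1/2}$ vanishes because $N/\sqrt{T_N}=\max(Nb_N,1)\ll\log k$ under \eqref{smallbias}. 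The hypothesis of Proposition \ref{prop:solskjaer}-(ii) that needs checking is only the jump-amplitude bound, which is at most $\gl^{N/2}=k^{o(1)}\ll a-b$ for all $i\le K$.

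Two secondary remarks. First, the refined derivative bound $\partial_s\langle A\rangle_s\le C A_s\max_{x\in\mathcal D_s}\gl^{(N-k+h^\wedge_s(x))/2}$ and the equilibrium control of the discrepancy height are simply not needed here: the crude bound $\max_x\gl^{(N-k+h^\wedge_s(x))/2}\le\gl^{N/2}=k^{o(1)}$ holds deterministically by the small-bias assumption, with no appeal to Proposition \ref{lem:dens}; more to the point, Lemma \ref{Lemma:AN} requires no control on $\partial_s\langle A\rangle_s$ at all. Second, the place where the paper really does need sharp control on $\partial_s\langle A\rangle_s$ is the \emph{next} step (Lemma \ref{newform}), where bracket increments are converted into time increments via a \emph{lower} bound on $\partial_s\langle A\rangle_s$ (Lemma \ref{Lemma:BrackDeriv} together with Proposition \ref{Prop:BoundMax0}); your proposal conflates these two steps, and the expectation-based shortcut you use to glue them does not work.
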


\begin{proof}
  We apply Proposition \ref{prop:solskjaer}-(ii) to $(A_{t+\cT_{i-1}})_{t\ge 0}$, with $a=k^{\frac12 - (i-1)\eta - \frac \gd 5}N$, $b=k^{\frac12 - i\eta - \frac \gd 5}N$. We obtain that for all $N$ sufficiently large and every $i\le K$
 $$\bbP[  \Delta_i \langle A\rangle\ge k^{1 - 2(i-1)\eta - \frac \gd 4} N^2]\le k^{-\delta/100}.$$ 
Applying the same proposition to $(A_{t+\cT_{K}})_{t\ge 0}$ with $a=k^{- \frac \gd 5}N$ and $b=0$, we obtain 
\begin{equation}
\bbP[  \Delta_\infty \langle A\rangle\ge T_N ]\le 8 N k^{-\frac {\gd}{5}}(T_N)^{-1/2},
\end{equation}
and the r.h.s.\ tends to zero by assumption \eqref{smallbias}.
\end{proof}

The next step is to compare $\Delta_i \langle A\rangle$ with $\cT_i-\cT_{i-1}$.
For the last increment this is easy: We have  $\partial_t \langle A_\cdot\rangle\ge 1$ for any $t\le \cT_{\infty}$ (from our construction 
$A$ changes its value at rate at least $1$, and its minimal increment in absolute value is $1$). We have thus $\cT_{\infty}-\cT_K \le \gD_{\infty} \langle A \rangle$, and 
thus when $\cA_N$ holds we have
\begin{equation}\label{Eq:Tinf}
\cT_{\infty}-\cT_K \le T_N\;.
\end{equation}

\subsection{Control of intermediate increments}\label{s:inter}

For all other increments we have to use a subtler control of the bracket.
Let us set 
 $$\bH(t):= \gl^{\frac{N-k}{2}}\max_{x\in \lint 0,N \rint} \frac{\gl^{\frac12 h^{\wedge}_t(x)}- \gl^{\frac12 h^{\vee}_t(x)}}{\gl-1}\;,$$
which corresponds roughly (up to a multiplicative factor $\gl^{N}$) to the maximal height difference  $\max h^{\wedge}_t(x)-h^{\vee}_t(x)$ and 
thus provides a bound for  $\max_x h^{\wedge}_t(x)-h^{\pi}_t(x)$.\\
Recall $Q(\cdot)$ from Subsection \ref{Subsec:eqSmallBias}, and set $Q(h_t^\pi) := Q(\eta_t^\pi)$ where $\eta_t^\pi$ is the particle configuration associated with $h_t^\pi$.
 
\begin{lemma}\label{Lemma:BrackDeriv}
We have $\partial_t \langle A_\cdot\rangle \ge \frac{   A_t }{6 \bH(t) Q(h_t^{\pi})}$.
 \end{lemma}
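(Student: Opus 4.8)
The plan is to compute the predictable bracket $\langle A_\cdot\rangle$ explicitly from the jump rates of the grand coupling and then bound each contribution from below. Recall from \eqref{defAt} that $A_t = \gl^{\frac{N-k}{2}}\sum_{x=1}^{N-1} \frac{\lambda^{\frac12 h_t^\wedge(x)}-\lambda^{\frac12 h_t^\pi(x)}}{\lambda - 1}$, so the jumps of $A_t$ come from corner flips at sites where at least one of the two interfaces has a corner. First I would write the derivative of the bracket as a sum over $x\in\lint 1,N-1\rint$ of (jump rate at $x$) times (squared jump size of $A$ at $x$). Because we are using the specific grand coupling of Appendix \ref{Appendix:Coupling}, which decouples the two processes as much as monotonicity allows, the dominant contribution will be from sites where only one of $h^\wedge,h^\pi$ flips; at such a site the jump size of $A$ is, up to the global factor $\gl^{\frac{N-k}{2}}$, comparable to $\gl^{\frac12 h^\wedge_t(x)}$ or $\gl^{\frac12 h^\pi_t(x)}$ respectively, and the rate is $p_N$ or $q_N$, hence bounded below by a constant.

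The heart of the argument is then a lower bound of the form
\begin{equation*}
\partial_t \langle A_\cdot\rangle \;\gtrsim\; \gl^{N-k} \sum_{x : h^\wedge_t(x) > h^\pi_t(x)} \gl^{h^\pi_t(x)},
\end{equation*}
obtained by keeping only the downward-corner flips of $h^\pi$ (rate $q_N$, uncoupled from $h^\wedge$ under the Appendix coupling whenever the two interfaces differ at $x$), using that $\gl>1$ so $\gl^{\frac12 h^\wedge}-\gl^{\frac12 h^\pi}\le \gl^{\frac12 h^\wedge}$ is the wrong direction — instead one uses $(\gl^{\frac12 a}-\gl^{\frac12 b})\ge c\,\gl^{\frac12 b}$ is also false, so the correct elementary inequality to invoke is that on the interval where the two interfaces differ, the jump size of a single-interface flip dominates $\gl^{\frac12 h^\pi_t(x)}(\gl-1)/\gl$ times the local height gap, summed appropriately. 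The point is to reorganize so that the sum $\sum_x(\gl^{\frac12 h^\wedge_t(x)}-\gl^{\frac12 h^\pi_t(x)})=\gl^{\frac{k-N}{2}}(\gl-1)A_t$ appears, while each term is controlled by the largest term times the number of nonzero terms: the largest term is at most $\bH(t)$ by definition, and the number of consecutive sites where $h^\pi$ can fail to have a corner (hence fail to contribute) is controlled by $Q(h_t^\pi)$, the maximal run of equal particle-occupation in $\eta_t^\pi$. Putting these together gives $\partial_t\langle A_\cdot\rangle \ge c\, A_t/(\bH(t)Q(h_t^\pi))$ for some explicit constant, which one checks is at least $1/6$.

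The main obstacle is the bookkeeping that produces the factor $Q(h_t^\pi)^{-1}$: one must argue that on every interval of length roughly $Q(h_t^\pi)$ inside the region where $h^\wedge_t$ and $h^\pi_t$ differ, the equilibrium interface $h^\pi_t$ has at least one corner available to flip (downward or upward), because a corner-free stretch of $h^\pi$ of length $\ell$ forces a run of $\ell$ equal values in $\eta^\pi_t$, i.e.\ $Q(h^\pi_t)\ge\ell$. Hence in any window of length $Q(h^\pi_t)+1$ there is a flippable site of $h^\pi$, and at that site the coupling lets $h^\pi$ move independently of $h^\wedge$ (the monotonicity-forced simultaneous moves only occur when the two interfaces agree at that site, which is excluded inside the differing region); the resulting jump of $A$ has size at least $\gl^{\frac{N-k}{2}}\gl^{\frac12 h^\pi_t(x)}\ge \gl^{\frac{N-k}{2}}\cdot(\text{min over the window})$. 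Dividing the weighted area $A_t$ into such windows and bounding the weight of each window by $\bH(t)$ times the window length $Q(h^\pi_t)$ yields the claimed inequality. I would double-check the constant $6$ by tracking the worst case $p_N=q_N=1/2$ in the rate and the factor $(\gl-1)/\gl \ge 1/2$ coming from \eqref{linex}.
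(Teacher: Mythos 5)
Your final sketch is essentially the paper's argument: lower-bound $\partial_t\langle A\rangle$ by counting single-interface flips of $h^\pi$ available inside the region where $h^\wedge_t$ and $h^\pi_t$ differ, observe that any corner-free stretch of $h^\pi$ has length at most $Q(h^\pi_t)$, and bound the weighted area of each ``window'' by $\bH(t)$ times its length. Two remarks, one cosmetic and one substantive.

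Cosmetically, the intermediate displayed inequality $\partial_t\langle A\rangle \gtrsim \gl^{N-k}\sum_{x:h^\wedge>h^\pi}\gl^{h^\pi(x)}$ is false as written (the sum runs over \emph{all} sites in the differing region, most of which carry no flippable corner of $h^\pi$ and hence contribute nothing to the bracket); you acknowledge the confusion and abandon this, but the detour through ``elementary inequalities'' is a red herring. The clean way to see the lower bound is the paper's: once $A_t$ has been normalized by $\delta_{\min}(f^{(0)}_{N,k})=\gl^{(k-N)/2}$, \emph{every} jump of $A$ has amplitude $\ge 1$ (this is precisely what $\delta_{\min}$ quantifies), so each flippable corner of $h^\pi$ in the differing region contributes at least $q_N\ge 1/3$ to $\partial_t\langle A\rangle$, independently of the exact value of $\gl^{\frac12 h^\pi(x)}$.

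The substantive gap is in your handling of short connected components of the differing region. Your windowing step produces at least one flippable corner per window of length $Q(h^\pi_t)+1$, but a maximal connected component $\lint a,b\rint$ of $\cC_t$ may have length $b-a<Q(h^\pi_t)$, in which case your argument produces nothing there, and the weighted area carried by the union of short components is not negligible a priori. The paper fills this in with a separate observation: on any maximal component $\lint a,b\rint$, the interface $h^\pi_t$ cannot be linear (since $h^\wedge_t=h^\pi_t$ at the two boundaries of the component while $h^\wedge_t>h^\pi_t$ somewhere inside, a slope-$\pm1$ stretch of $h^\pi_t$ would force $h^\wedge_t=h^\pi_t$ throughout), hence $\#(\cD_t\cap\lint a,b\rint)\ge 1$ always. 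This gives $\#(\cD_t\cap\lint a,b\rint)\ge\max(\lfloor (b-a)/Q\rfloor,1)\ge (b-a)/(2Q)$, which is what makes the bookkeeping close and is where the factor $2$ in the constant $6=2\cdot 3$ actually comes from (not from $(\gl-1)/\gl$ as you guessed). You should add this boundary argument before claiming the inequality.
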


 \begin{proof}
 As mentioned above, all the jumps of $A_t$ have amplitude larger  than or equal to $1$.
 Moreover, $A_t$ performs a jump whenever $h^{\pi}_t$ performs a transition while $h^{\wedge}_t$ does not, or when the opposite occurs.
  As any such transition occurs at rate larger than $q_N\ge 1/3$, only considering the transitions for $h^{\pi}_t$ , we obtain the following lower bound for the drift of 
  $\langle A_\cdot\rangle$
  (recall \eqref{laplace})
  \begin{equation}\label{loopz}
   \partial_t \langle A_\cdot\rangle \ge \frac{1}{3} \#\{ x\in \cC_t \ : \ \gD(h^{\pi}_t)(x)\ne 0\}=:\frac{1}{3}\#\cD_t
  \end{equation}
where
$$ \cC_t:= \big\{x\in\lint 1,N-1\rint \ : \ \exists y\in \lint x-1,x+1\rint,\   h^\wedge_t(y) > h^{\pi}_t(y) \big\}\;.$$
Now let $\lint a, b\rint$ be a maximal connected component of $\cC_t$, we claim that 
\begin{equation}\label{zaap}
 \#( \cD_t \cap \lint a,b \rint)\ge \max\left( \left\lfloor \frac{b-a}{Q(h^{\pi}_t)}\right\rfloor, 1 \right)
 \ge \frac{b-a}{2Q(h^{\pi}_t)}.
\end{equation}
To check this inequality, notice that $\#( \cD_t \cap \lint a,b \rint)\ge 1$ because $h^{\pi}_t$ cannot be linear on the whole segment $\lint a, b\rint$. On the other hand, considering the particle configuration associated to $h^\pi_t$ and decomposing the segment $\lint a,b\rint$ into maximal connected components containing either only particles or only holes, we see that any two consecutive components corresponds to a point in $\cD_t$: since $Q(h^{\pi}_t)$ is an upper bound for the size of these components, we deduce that $\#( \cD_t \cap \lint a,b \rint)\ge \left\lfloor \frac{b-a}{Q(h^{\pi}_t)}\right\rfloor$.

\medskip

Now we observe that 
\begin{equation}\label{ziip}
 \gl^{\frac{N-k}{2}}\sum_{x=a}^b \frac{\gl^{\frac{h^{\wedge}_t(x)}{2}}-\gl^{\frac{h^{\pi}_t(x)}{2}}}{\gl-1} \le 
 (b-a) \bH(t). 
\end{equation}
Combining \eqref{zaap} and \eqref{ziip} and summing over all such intervals $\lint a,b\rint$, we obtain 
\begin{equation}
 A_t\le  2\,\#\cD_t \, \bH(t)\,Q(h^{\pi}_t),
\end{equation}
and \eqref{loopz} allows us to conclude.
\end{proof}
The last ingredient needed is then a bound on $\bH$: The proof of this lemma is postponed to Subsection \ref{Subsec:Max}. Recall that $t_0=\log k/(2\gap_N)$.

\begin{proposition}\label{Prop:BoundMax0}
For any $c >0$ we have
\begin{equation}
\lim_{N\to \infty} \sup_{t\ge t_0}  \P\Big(\bH(t)  >  k^{\frac12 + c}\Big) = 0\;.
\end{equation}
\end{proposition}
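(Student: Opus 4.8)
The plan is to reduce the statement to an estimate on the maximal pointwise gap between the extremal interfaces, and then to control that gap by combining a sharp first-moment bound with a localization argument.

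First note that, under \eqref{smallbias}, every prefactor of the form $\gl^{O(N)}$ is subpolynomial in $k$: since $\log\gl\sim 2b_N$ and $Nb_N=o(\log k)$ we have $\gl^{N}=e^{O(Nb_N)}=k^{o(1)}$ (recall $k\le N$). Using the identity $\frac{\gl^{a/2}-\gl^{b/2}}{\gl-1}=\sum_{n=0}^{(a-b)/2-1}\gl^{b/2+n}\le\frac{a-b}{2}\gl^{a/2}$ together with $h^{\wedge}_t(x)\le\wedge(x)\le k$, one gets
\[
\bH(t)\le\tfrac12\,\gl^{N/2}\,\max_x\big(h^{\wedge}_t(x)-h^{\vee}_t(x)\big)=k^{o(1)}\max_x\big(h^{\wedge}_t(x)-h^{\vee}_t(x)\big),
\]
so it suffices to show that for every $c>0$, $\sup_{t\ge t_0}\P\big(\max_x(h^{\wedge}_t(x)-h^{\vee}_t(x))>k^{1/2+c}\big)\to 0$. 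Splitting $h^{\wedge}_t-h^{\vee}_t=(h^{\wedge}_t-h^{\pi}_t)+(h^{\pi}_t-h^{\vee}_t)$ — both terms non-negative under the monotone grand coupling, $h^{\pi}$ being the stationary chain — and invoking the particle--hole symmetry exchanging $\wedge$ and $\vee$, it is enough to bound $\max_x(h^{\wedge}_t(x)-h^{\pi}_t(x))$. On the boundary zones $x\le\tfrac12 k^{1/2+c}$ and $x\ge N-\tfrac12 k^{1/2+c}$ one has $h^{\wedge}_t(x)-h^{\pi}_t(x)\le\wedge(x)-\vee(x)=2\min(x,N-x)\le k^{1/2+c}$ deterministically, so only the bulk is at stake.

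For the bulk the key input is a first-moment estimate coming from \eqref{Eq:V}. The function $V^{\wedge}(t,x):=\E[\gl^{h^{\wedge}_t(x)/2}]-a_{N,k}(x)$ solves $\partial_t V=(\sqrt{pq}\,\gD-\varrho)V$ with zero boundary values; expanding in the basis $\{\sin(jx\pi/N)\}$ with eigenvalues $-\gamma_j$ gives $|V^{\wedge}(t,x)|\le(\max_j|c_j|)\,e^{-\gap_N t}\sum_{j\ge1}e^{-(\gamma_j-\gamma_1)t}$, where $|c_j|\le\frac2N\sum_y\gl^{\wedge(y)/2}\le 2\gl^{k/2}=k^{o(1)}$ (using $0\le a_{N,k}(y)\le\gl^{\wedge(y)/2}$). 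Since $\gamma_j-\gamma_1$ grows quadratically in $j$ at rate of order $N^{-2}$, while $N^2/t_0\sim(b_N^2N^2+\pi^2)/\log k=o(\log k)$ by \eqref{eq:taylorgap} and \eqref{smallbias}, we get $\sum_{j\ge1}e^{-(\gamma_j-\gamma_1)t}\le 1+O(N t^{-1/2})=k^{o(1)}$ for all $t\ge t_0$; combined with $e^{-\gap_N t}\le e^{-\gap_N t_0}=k^{-1/2}$ this yields $|V^{\wedge}(t,x)|\le k^{-1/2+o(1)}$ uniformly in $x$ and in $t\ge t_0$. Solving the two-point boundary value problem for $a_{N,k}$ shows $a_{N,k}(x)\ge k^{-o(1)}$, and since $\E[\gl^{h^{\pi}_t(x)/2}]=a_{N,k}(x)$ we have $\E\big[\gl^{h^{\wedge}_t(x)/2}-\gl^{h^{\pi}_t(x)/2}\big]\le k^{-1/2+o(1)}$. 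Using $\gl^{(N-k)/2+h^{\pi}_t(x)/2}\ge1$ and $\log\gl\ge\tfrac12(\gl-1)$ one gets $\gl^{(N-k)/2}(\gl^{h^{\wedge}_t(x)/2}-\gl^{h^{\pi}_t(x)/2})/(\gl-1)\ge\frac14(h^{\wedge}_t(x)-h^{\pi}_t(x))$, whence, taking expectations, $\E[h^{\wedge}_t(x)-h^{\pi}_t(x)]\le 4(\gl-1)^{-1}\gl^{(N-k)/2}V^{\wedge}(t,x)\le N k^{-1/2+o(1)}$; by Markov this controls $h^{\wedge}_t(x)-h^{\pi}_t(x)$ at each fixed bulk site.

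The main obstacle is to upgrade this into a bound uniform over the $\Theta(N)$ bulk sites: a naive union bound fails, because under \eqref{smallbias} the quantity $N$ can be super-polynomial — even doubly exponential — in $k$. I would proceed by localization. Over a window $I$ of length $\ell$ around a bulk site, $h^{\wedge}_t-h^{\pi}_t$ restricted to $I$ is driven by the exclusion dynamics inside $I$ and by the fluxes through the two endpoints of $I$; on the event — of probability $1-o(1)$ by Proposition \ref{lem:dens} — that the equilibrium roughness satisfies $Q(h^{\pi}_t)\le k^{o(1)}N/k$, a bump of height $\ge k^{1/2+c}$ and width $\gtrsim k^{1/2+c}$ in $h^{\wedge}_t-h^{\pi}_t$ would force an atypical net flux through $\partial I$, whose probability is estimated through the martingale/diffusion bounds of Appendix \ref{sec:diffu} (Proposition \ref{prop:solskjaer}) applied to the weighted area restricted to $I$; choosing $\ell$ large enough that the resulting per-window probability beats $\ell/N$ uniformly over admissible $N$, and summing over the $O(N/\ell)$ windows, closes the argument. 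The uniformity in $t\ge t_0$ costs nothing, since every estimate above is carried out at $t=t_0$ and transported to larger times via the monotonicity of $e^{-\gap_N t}$ and the supermartingale property of the weighted area.
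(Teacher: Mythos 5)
Your reduction $\bH(t)\le k^{o(1)}\max_x(h^\wedge_t(x)-h^\vee_t(x))$ is fine, but the proof then develops two genuine problems.

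\textbf{The per-site first-moment bound is derived incorrectly.} You write $\E[h^\wedge_t(x)-h^\pi_t(x)]\le 4(\gl-1)^{-1}\gl^{(N-k)/2}V^\wedge(t,x)\le Nk^{-1/2+o(1)}$, which tacitly uses $(\gl-1)^{-1}\lesssim N$, i.e.\ $b_N\gtrsim 1/N$. Under \eqref{smallbias} nothing forbids $b_N$ from being astronomically small (say $e^{-N}$), so $(\gl-1)^{-1}$ can be far larger than $N$. The mistake originates in bounding $|c_j|\le 2\gl^{k/2}$: this crude estimate throws away the factor $(\gl-1)$ that $c_j$ actually contains. One must instead keep $(\gl-1)^{-1}$ inside the Fourier coefficient and use $|c_j/(\gl-1)|=\frac2N|f^{(j)}_{N,k}(\wedge)|\le 2k\gl^{k/2}$, which yields the correct per-site bound $\E[h^\wedge_t(x)-h^\pi_t(x)]\le k^{1/2+o(1)}$ — this is also what the paper's \eqref{stimean} amounts to. Notice, incidentally, that even your quoted bound $Nk^{-1/2+o(1)}$ would be useless for $N\gg k^2$: Markov's inequality would only give $\P(h^\wedge_t(x)-h^\pi_t(x)>k^{1/2+c})\lesssim N k^{-1-c+o(1)}$ per site, which blows up with $N$.

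\textbf{The localization step is where the real difficulty lies, and the sketch does not resolve it.} Working with the raw height difference $h^\wedge_t-h^\pi_t$, the only deterministic regularity available is $2$-Lipschitz, so a peak of height $k^{1/2+c}$ forces a bump of width only $\gtrsim k^{1/2+c}$. Tiling the bulk with windows of that size gives $\Theta(N/k^{1/2+c})$ windows, and since $N$ is unconstrained relative to $k$ under \eqref{smallbias}, no per-window probability depending on $k$ alone can beat that union bound. The paper circumvents this precisely by \emph{not} passing to raw heights: Lemma \ref{smsl} is a deterministic bound showing that $H_i(t,\cdot)$ can decrease by at most $k^2/(4N)$ per unit step, hence by at most $1/4$ over a window of length $\lceil N/k^2\rceil$. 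This makes the pointwise max within $1/4$ of the window average, and the number of windows becomes $O(k^2)$ — independent of $N$. The second missing ingredient is the strength of the per-window tail: the paper obtains a \emph{stretched-exponential} bound $e^{-k^{a'}}$ for each window average via the exponential-moment martingale bound of Lemma \ref{lem:expobd}, which dominates the polynomial $O(k^2)$ union bound with room to spare. Your sketch instead invokes the diffusion estimates of Proposition \ref{prop:solskjaer}, which only give polynomial tails of the form $u^{-1/2}$; it is not shown (and not clear) that these can deliver a bound small compared to $\ell/N$ uniformly in $N$, nor is any window size or target event specified precisely enough to check. Without a substitute for both Lemma \ref{smsl} (or the weighting it exploits) and the stretched-exponential concentration of Lemma \ref{lem:expobd}, the argument does not close.
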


\begin{proof}[Proof of Lemma \ref{newform}] 
By Lemma \ref{Lemma:T0}, Lemma \ref{Lemma:AN} and Equation \eqref{Eq:Tinf}, we already know that
$$ \lim_{N\to \infty}\P\left( \{ \cT_{0} \le t_{\delta/2}\}\cap \{\cT_{\infty}-\cT_{K}\le T_N \}\right)=1 \;.$$
We define $\cH_N$ to be the event on which particles are reasonably spread and $\bH(t)$ is reasonably small for most of the times 
within the interval $[t_{\delta/2}, t_{\delta/2}+T_N]$,
\begin{equation}
 \cH_N:= \Big\{ \int^{t_{\delta/2}+T_N}_{t_{\delta/2}}  \ind_{\{\text{ $\bH(t) \le  k^{\frac12 + \frac{\delta}{80}}\} \cap \{Q(h_t^\pi) \le N k^{\frac{\delta}{80}-1}$ } \}} dt  \ge T_N (1-2^{-(K+1)}) \Big\}\;.
\end{equation}
By Markov's inequality, Proposition \ref{lem:dens} and Proposition \ref{Prop:BoundMax0}, we have 
$$ \lim_{N\to \infty}\P(\cH_N) = 1\;.$$
We now work on the event $\cH_N \cap \cA_N \cap \{\cT_0 \le t_{\delta/2}\}$ whose probability tends to $1$ according to Lemmas \ref{Lemma:T0}, \ref{Lemma:AN}.
We prove by induction that $\Delta\cT_j \le 2^{-j} T_N$ for all $j\in \lint 1,K\rint$. 
Let us reason by contradiction and let $i$ be the smallest integer such that $\Delta\cT_i > 2^{-i} T_N$.
We have
\begin{equation}\label{indd}
\Delta_i\langle A\rangle \ge \int_{\cT_{i-1}}^{\cT_{i-1}+2^{-i}T_N} \partial_t \langle A_\cdot
\rangle\ind_{\{\text{ $\cH(t) \le k^{\frac12 + \frac{\delta}{80}}\}\cap\{Q(h_t^\pi) \le N k^{\frac{\delta}{80}-1}$ } \}} dt\end{equation}
Now, Lemma \ref{Lemma:BrackDeriv} and the restriction with the indicator function provides a uniform lower bound on $\partial_t \langle A \cdot \rangle$.
 The assumption $\Delta\cT_j \le 2^{-j} T_N$ for $j<i$ implies that $\cT_{i-1}\le t_{\delta/2} + T_N(1-2^{-(i-1)})$, and thus the assumption that $\cH_N$ holds implies that the indicator in
 \eqref{indd} is equal to one 
 on a set of measure at least $2^{-i} - 2^{-(K+1)}\ge  2^{-(K+1)}$. All of this  implies that
 \begin{equation}
\Delta_i\langle A\rangle \ge \frac16 T_N 2^{-(K+1)} k^{1 -i\eta -\frac{\delta}{40} - \frac{\delta}{5}}
\end{equation}
On the other hand, since we work on $\cA_N$ we have $\Delta_i\langle A\rangle \le k^{1-2(i-1)\eta-\frac{\delta}{4}}N^2$ so that we get a contradiction as soon as $\eta$ is small enough compared to $\delta$.
\end{proof}

\subsection{Bounding the maximum}\label{Subsec:Max}

Recall the function $a_{N,k}$ defined in Subsection \ref{sec:eigen}. Set
$$ H_1(t,x) := \gl^{\frac{N-k}{2}}\frac{\lambda^{\frac12 h_t^\wedge(x)}-a_{N,k}(x)}{\lambda- 1}\;,\quad H_2(t,x) :=\gl^{\frac{N-k}{2}} \frac{\lambda^{\frac12 h_t^{\vee}(x)}-a_{N,k}(x)}{\lambda- 1}\;,$$
so that
$$ H_1(t,x)-H_2(t,x) = \gl^{\frac{N-k}{2}}\frac{\lambda^{\frac12 h_t^\wedge(x)}-\lambda^{\frac12 h_t^{\vee}(x)}}{\lambda- 1}\;.$$
For every $i=1,2$, we define
$$\bH_i(t):= \max_{x\in \lint 0,N\rint} |H_i(t,x)|\;.$$
Notice that $\bH(t) \le \bH_1(t) + \bH_2(t)$ so that Proposition \ref{Prop:BoundMax0} is a consequence of the following result.
\begin{proposition}\label{Prop:BoundMax}
For any $c >0$, there exists $c' >0$ such that for all $N$ large enough
$$ \sup_{t\ge t_0} \max_{i\in \{1,2\}} \P\Big(\bH_i(t)  >  k^{\frac12 + c}\Big) \le e^{- k^{c'}}\;.$$
\end{proposition}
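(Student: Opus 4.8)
The plan is to control $\bH_i(t) = \max_x |H_i(t,x)|$ by exploiting that $H_i(t,\cdot)$ solves (in expectation, and as a process modulo a martingale term) the linear equation governed by the operator $\sqrt{pq}\,\gD - \varrho$, for which the discrete maximum principle gives exponential decay of the sup-norm at rate $\gap_N$. Since $t_0 = \log k /(2\gap_N)$, after time $t_0$ the deterministic part of $H_i$ has sup-norm at most $\sim e^{-\gap_N t_0}\cdot(\text{initial sup-norm})$; the initial sup-norm of $H_i(0,\cdot)$ is at most of order $Nk\gl^{k/2}$ (using $|\gl^{h(x)/2}-a_{N,k}(x)|\le \gl^{k/2}$ up to lower order terms, as in Lemma \ref{Lemma:T0}), and $\gl^{N/2}$ is negligible compared to any power of $k$ by \eqref{smallbias}, so the deterministic part is $\ll k^{1/2+c/2}$ for $t\ge t_0$. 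Hence the whole task reduces to a deviation estimate: the fluctuation of $H_i(t,x)$ around its mean must be shown to exceed $k^{1/2+c}$ only with probability $e^{-k^{c'}}$, uniformly in $x$ and $t\ge t_0$ (a union bound over the $N+1$ sites $x$ is harmless against a stretched-exponential bound).

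First I would fix $i=1$ and $x$, and write $H_1(t,x) = \gl^{(N-k)/2}(\gl^{h^\wedge_t(x)/2}-a_{N,k}(x))/(\gl-1)$. The process $M_s := e^{\gap_N s} H_1(s,x)$ (more precisely its restriction to the single site $x$, obtained from the discrete-heat-equation structure of \eqref{Eq:V}) is a martingale, with predictable bracket controlled exactly as in the proof of Lemma \ref{Lemma:BoundVar}: the bracket derivative is bounded by $\lambda^k$ times $e^{2\gap_N s}$ times the local jump rate, which is $O(1)$. This gives $\var(H_1(t,x)) \le C \lambda^k/\gap_N$, which after accounting for the $\gl^{N-k}$ prefactor already built into $H_1$ is of order $\gl^{N} k/\gap_N$ — negligible against $k^{1+2c}$. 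So Chebyshev alone gives the bound with polynomially small, not stretched-exponentially small, probability; to upgrade this I would instead use exponential concentration for the martingale $M_s$ via its bracket. Since the jumps of $M_s$ are bounded (each jump of $h^\wedge$ changes $\gl^{h(x)/2}$ by a bounded factor, and the $e^{\gap_N s}$ weight is bounded on the relevant window) and $\langle M\rangle_t$ is bounded by a deterministic quantity of order $\gl^k e^{2\gap_N t}/\gap_N$, a Freedman / Bernstein-type inequality for martingales with bounded jumps yields
$$\P\big(|M_t - M_0| \ge u\big) \le 2\exp\Big(-\frac{u^2}{2(\langle M\rangle_t + c_0 u)}\Big),$$
and taking $u = k^{1/2+c}e^{\gap_N t}\gl^{(k-N)/2}$ (the scale of $k^{1/2+c}$ in the $H_1$ normalisation) makes the exponent at least a power of $k$: the key cancellation is that $\langle M\rangle_t \sim \gl^k e^{2\gap_N t}$, so $u^2/\langle M\rangle_t \gtrsim k^{1+2c}\gl^{-N} \gg 1$, in fact $\gg k^{c'}$ for a suitable $c' < 2c$ since $\gl^{N} = k^{o(1)}$.

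The main obstacle I anticipate is twofold. First, the martingale $M_s$ is not directly $H_1$ itself but rather the whole vector $(H_1(s,y))_y$ solves the linear system, and isolating a scalar martingale at a fixed site $x$ with a clean bracket bound requires care — one really wants to write $\gl^{h^\wedge_t(x)/2}$ as $\sum_j c_j(x) e^{-\gamma_j t}(\dots) + (\text{martingale})$ using the eigenfunction decomposition of Subsection \ref{sec:eigen}, and then bound each piece; alternatively work directly with $M_s = e^{\varrho s}\cdot(\text{something})$ using that $\gl^{h_t(x)/2}$ has generator $\sqrt{pq}\,\gD - \varrho$ applied coordinatewise plus boundary terms, which is exactly the content of \eqref{Eq:V}. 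Second, and more delicate, the estimate must be uniform over all $t\ge t_0$, including $t$ very large where $h^\wedge_t$ is essentially at equilibrium; there one cannot rely on the decay of the deterministic part past a point, but instead the bound $\var(H_1(t,x)) \le C\gl^N k/\gap_N$ (which is uniform in $t$) together with the exponential tail from the bracket bound (also uniform, since $\langle M\rangle_t/e^{2\gap_N t}$ is bounded uniformly in $t$) handles it — one simply applies the concentration inequality to the stopped/centered martingale and uses that the centering $\E[H_1(t,x)]$ is itself $\le k^{1/2+c/2}$ for all $t\ge t_0$. Finally a union bound over $x\in\lint 0,N\rint$ and the symmetric treatment of $i=2$ (replacing $h^\wedge$ by $h^\vee$, $k$ by $N-k$ where appropriate) complete the proof; the $\sup_{t\ge t_0}$ is absorbed because the bound is genuinely uniform in $t$ rather than obtained by a further union bound over time.
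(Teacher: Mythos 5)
There is a genuine gap. Your plan correctly identifies the two pieces of the argument — decay of the mean via the spectral gap, and concentration of the fluctuation around the mean via an exponential martingale bound — but both parts break down as formulated, for the same underlying reason: you work pointwise in $x$ rather than on block averages.

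First, the process $M_s := e^{\gap_N s} H_1(s,x)$ at a fixed site $x$ is not a martingale; the generator applied to $\gl^{h_s(x)/2}$ produces the discrete Laplacian acting on the full vector $(\gl^{h_s(y)/2})_y$, so the correct object is (as you suspect) the eigenfunction decomposition $H_1(t,x) = \gl^{\frac{N-k}{2}}\sum_i \frac{2}{N}\sin(\frac{i\pi x}{N}) f^{(i)}_{N,k}(h^{\wedge}_t)$ and the associated martingale $N^{(t,x)}_s = \gl^{\frac{N-k}{2}}\sum_i e^{\gamma_i(s-t)}\big(f^{(i)}_{N,k}(h^{\wedge}_s)-e^{-\gamma_i s}f^{(i)}_{N,k}(\wedge)\big)\Phi_{x,i}$ with $\Phi_{x,i}=\frac{2}{N}\sin(\frac{i\pi x}{N})$. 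But then the quadratic variation estimate fails: with $|\Phi_{x,i}|\le 2/N$ and $\gamma_i \gtrsim i^2/N^2$, the bound $\int_0^t S(s)^2\,\mathrm d s \le \gl^{2N}\sum_{i,j}\frac{|\Phi_{x,i}\Phi_{x,j}|}{\gamma_i+\gamma_j}$ evaluates to order $\gl^{2N}\log N$, not $\gl^{2N}\log k$. In the small bias regime $k_N$ may grow much more slowly than $N$ (e.g.\ $k_N\sim\log\log N$ when $b_N=0$), so $\log N$ can overwhelm any target tail $k^{c'}$; the resulting exponential bound is then vacuous. Your heuristic $\langle M\rangle_t \sim \gl^k e^{2\gap_N t}/\gap_N$ treats $H_1(s,x)$ as a single-mode process, which it is not.

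Second, even if the pointwise tail were stretched-exponentially small, the union bound over $x\in \lint 0,N\rint$ costs a factor $N$, and again $\log N$ is not $o(k^{c'})$ in general under \eqref{smallbias}, so the union bound is not ``harmless'' as you assert.

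The paper resolves both problems simultaneously by first proving a one-sided Lipschitz estimate (Lemma \ref{smsl}): $H_i(t,y)-H_i(t,x)\ge -\frac{k^2(y-x)}{4N}$ for $y\ge x$, which reduces $\max_x |H_i(t,x)|$ to $\max_y|\bar H_i(t,y)|+1$ for block averages $\bar H_i$ over blocks of size $\ell=\lceil N/k^2\rceil$. The averaging replaces $\Phi_{x,i}$ by $\Phi_{y,i}$ satisfying $|\Phi_{y,i}|\le 2\min(1/N, 1/(i\ell))$, which cuts the quadratic variation down to $\gl^{2N}\log(N/\ell)=\gl^{2N}\cdot O(\log k)$ and reduces the union bound from $N$ to $N/\ell = k^2$ sites. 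Both reductions are essential and both hinge on the same lemma; without them the argument does not close.
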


The proof of this bound is split into two lemmas. First, we show that $H_i(t,\cdot)$ can not decrease too much.

\begin{lemma}\label{smsl}
We have for all $N$ sufficiently large, all $x\in \lint 1,N-1\rint$, all $t\ge 0$, every $i\in\{1,2\}$ and every $y\ge x$
\begin{equation}\label{zladiff}
H_i(t,y)-H_i(t,x)\ge -\frac{k^2(y-x)}{4N}.
\end{equation}
\end{lemma}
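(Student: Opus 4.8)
The plan is to establish the pointwise estimate $H_i(t,x+1)-H_i(t,x)\ge -k^2/(4N)$ for every $x\in\llbracket 1,N-1\rrbracket$ and all $N$ large, and then obtain \eqref{zladiff} by summing it over $x,x+1,\dots,y-1$. Throughout write $h=h_t^\wedge$ if $i=1$ and $h=h_t^\vee$ if $i=2$; the only properties of $h$ used are that $h\in\Omega_{N,k}$, so $h(x+1)-h(x)=\pm1$, and that $\llbracket 1,x\rrbracket$ contains at most $k$ particles, equivalently $h(x)\le h(N)+(N-x)=2k-x$, whence $\lambda^{\frac12 h(x)}\le\lambda^{k-x/2}$. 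I would also record the closed form of $a_{N,k}$: since $\lambda^{\pm1/2}$ are the roots of $r+r^{-1}=1/\sqrt{p_Nq_N}$, the solution of the recursion of Subsection~\ref{sec:eigen} is $a_{N,k}(x)=A\lambda^{x/2}+B\lambda^{-x/2}$ with $A=\frac{\lambda^k-1}{\lambda^N-1}$ and $B=\frac{\lambda^k(\lambda^{N-k}-1)}{\lambda^N-1}$, which one checks satisfies $a_{N,k}(0)=1$, $a_{N,k}(N)=\lambda^{(2k-N)/2}$; in particular $A,B\ge0$, $A+B=1$, and — the key identity — $\lambda^k-B=\lambda^N A$.

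Writing $\phi(x)=\lambda^{\frac12 h(x)}$ and using $\frac{\lambda^{1/2}-1}{\lambda-1}=\frac1{\lambda^{1/2}+1}\le\frac12$ and $\frac{1-\lambda^{-1/2}}{\lambda-1}=\frac1{\lambda+\lambda^{1/2}}\le\frac12$, a direct expansion of $\lambda^{\frac{N-k}2}\frac{(\phi(x+1)-\phi(x))-(a_{N,k}(x+1)-a_{N,k}(x))}{\lambda-1}$ gives
\[
H_i(t,x+1)-H_i(t,x)=\lambda^{\frac{N-k}2}\Big(\tfrac{\phi(x)-A\lambda^{x/2}}{\lambda^{1/2}+1}+\tfrac{B\lambda^{-x/2}}{\lambda+\lambda^{1/2}}\Big)\qquad\text{if }x+1\text{ carries a particle},
\]
\[
H_i(t,x+1)-H_i(t,x)=-\lambda^{\frac{N-k}2}\Big(\tfrac{\phi(x)-B\lambda^{-x/2}}{\lambda+\lambda^{1/2}}+\tfrac{A\lambda^{x/2}}{\lambda^{1/2}+1}\Big)\qquad\text{if }x+1\text{ carries a hole}.
\]
In the first case, dropping the nonnegative terms $\phi(x)$ and $B\lambda^{-x/2}$ gives $H_i(t,x+1)-H_i(t,x)\ge -\tfrac12\lambda^{\frac{N-k}2}A\lambda^{x/2}$. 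In the second case I would bound $\phi(x)-B\lambda^{-x/2}\le(\lambda^k-B)\lambda^{-x/2}=\lambda^N A\,\lambda^{-x/2}$ (using $\phi(x)\le\lambda^{k-x/2}$ and the identity above; if the left-hand side is negative the bound is only better), which gives $H_i(t,x+1)-H_i(t,x)\ge -\tfrac12\lambda^{\frac{N-k}2}\big(\lambda^N A\,\lambda^{-x/2}+A\lambda^{x/2}\big)$. Either way the increment is $\ge -\tfrac12\lambda^{\frac{N-k}2}\big(\lambda^N A\,\lambda^{-x/2}+A\lambda^{x/2}\big)$.

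It remains to check that $\lambda^{\frac{N-k}2}A\lambda^{x/2}$ and $\lambda^{\frac{N-k}2}\lambda^N A\,\lambda^{-x/2}$ are each $\le k^2/(8N)$ for $N$ large. Bounding $A=\frac{1+\lambda+\dots+\lambda^{k-1}}{1+\dots+\lambda^{N-1}}\le\frac{k\lambda^{k-1}}{N}$ and using $0\le x\le N-1$, $k\le N$, both quantities are $\le\frac kN\lambda^{2N}$; and by the small-bias hypothesis \eqref{smallbias}, $\log\lambda\le 3b_N$ for $N$ large, so $2N\log\lambda=o(\log k)$ and $\lambda^{2N}=e^{o(\log k)}\le k^{1/2}$ for $N$ large, whence both quantities are $\le k^{3/2}/N\le k^2/(8N)$ for $N$ large (recall $k\to\infty$ under \eqref{smallbias}). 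Summing the resulting bound $H_i(t,x+1)-H_i(t,x)\ge -k^2/(4N)$ over $x,\dots,y-1$ then yields \eqref{zladiff}, uniformly in $t$ and for both $i=1,2$. The delicate point — where the obvious argument fails — is the hole case: estimating $\phi(x)-B\lambda^{-x/2}$ by the trivial bound $\phi(x)\le\lambda^{k/2}$ only produces the factor $\lambda^{N/2}$, which under \eqref{smallbias} is $k^{o(1)}$ and hence much larger than $k^2/N$ when $k$ grows slowly; it is essential to retain the $-B\lambda^{-x/2}$ term and exploit the exact cancellation $\lambda^k-B=\lambda^N A$ coming from the boundary value $a_{N,k}(N)=\lambda^{(2k-N)/2}$.
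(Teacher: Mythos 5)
Your proof is correct, and the bookkeeping checks out: the closed form $a_{N,k}(x)=A\lambda^{x/2}+B\lambda^{-x/2}$ with $A=\frac{\lambda^k-1}{\lambda^N-1}$, $B=1-A$ does satisfy the boundary values and the identity $\lambda^k-B=\lambda^N A$; the two-case expansion of the discrete derivative of $\lambda^{h(x)/2}/(\lambda-1)$ is right; and the final estimate $\tfrac{k}{N}\lambda^{2N}\le k^{3/2}/N\le k^2/(8N)$ does follow from $k\to\infty$ and $N\log\lambda=o(\log k)$ under \eqref{smallbias}. However, the route is genuinely different from the paper's. The paper avoids the explicit formula for $a_{N,k}$ entirely and instead uses the probabilistic identity $a_{N,k}(x)=E_{\pi_{N,k}}[\lambda^{h(x)/2}]$ (a consequence of \eqref{Eq:V} letting $t\to\infty$). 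Starting from $\frac{\lambda^{\frac12 h(x)}-\lambda^{\frac12 h(x-1)}}{\lambda-1}=\lambda^{\frac12(h(x-1)-1)}\big(\eta(x)-\tfrac1{\sqrt\lambda+1}\big)$, it drops the nonnegative term $\lambda^{\frac12(h_t^\wedge(x-1)-1)}\eta_t^\wedge(x)$ and is left with two error terms: a ``fluctuation'' term bounded pointwise using $|\zeta(x)-\zeta'(x)|\le 2k$, and a ``density'' term $\lambda^{(N-k)/2}E_{\pi_{N,k}}[\lambda^{\frac{h(x-1)-1}{2}}\eta(x)]$ controlled via the occupation bound of Proposition~\ref{lem:dens}. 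Your version trades that probabilistic input for the algebraic cancellation $\lambda^k-B=\lambda^N A$, which plays exactly the role you identify: without it, the hole case produces only $\lambda^{N/2}$, which under \eqref{smallbias} is $k^{o(1)}$ and too large. The net effect is that your argument is self-contained (it does not call on Proposition~\ref{lem:dens}) at the cost of carrying the explicit form of $a_{N,k}$, whereas the paper's is shorter by reusing an estimate already available. Both yield the same $k^2/(4N)$ increment bound.
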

\begin{proof}
It is of course sufficient to prove that
$$H_i(t,x)-H_i(t,x-1)\ge -\frac{k^2}{4N}\;.$$
We have for any $\eta\in \gO^0_{N,k}$, setting $h=h(\eta)$,
 \begin{equation}
 \frac{\lambda^{\frac12 h(x)}-\lambda^{\frac12 h(x-1)}}{\lambda- 1}
 =\lambda^{\frac12 (h(x-1)-1)}\left(\eta(x)-\frac{1}{\sqrt{\gl}+1}\right).
 \end{equation}
 Note that $a_{N,k}(x)=E_{\pi_{N,k}}[ \gl^{h(x)/2}]$ where $a_{N,k}$ was defined in Section \ref{sec:eigen}. Hence using the fact that $\eta^{\wedge}_t(x)\ge 0$, we get (the same holds for $i=2$ and $h^\vee$):
 \begin{multline}\label{Eq:H1H1}
  H_1(t,x)-H_1(t,x-1)\\
  \ge 
  \gl^{\frac{N-k}{2}}\frac{ E_{\pi_{N,k}}\left[\lambda^{\frac{h(x-1)-1}{2}}\right]-\lambda^{\frac{h^{\wedge}_t(x-1)-1}{2}}}{\sqrt{\gl}+1}  
  -\gl^{\frac{N-k}{2}} E_{\pi_{N,k}}\left[ \lambda^{\frac{h(x-1)-1}{2}}\eta(x)\right].
 \end{multline}
By Proposition \ref{lem:dens} and the small bias assumption \eqref{smallbias}, we have for all $N$ large enough
$$\gl^{\frac{N-k}{2}} E_{\pi_{N,k}}\left[ \lambda^{\frac{h(x-1)-1}{2}}\eta(x)\right]\le \gl^{\frac{N}{2}}E_{\pi_{N,k}}[\eta(x)]\le \gl^{2N} \frac{k}{N}\le \frac{k^2}{8N}.$$
Regarding the first term on the r.h.s.~of \eqref{Eq:H1H1}, we simply notice that for $\zeta, \zeta'\in \gO_{N,k}$, we have $\zeta(x)-\zeta'(x)\le 2k$ so that
\begin{equation}
 \gl^{\frac{N-k}{2}}|\gl^{\frac{\zeta(x)}{2}}-\gl^{\frac{\zeta'(x)}{2}}|\le (\gl-1)\gl^{\frac{N}{2}} k \le \frac{k^{2}}{8N}\;.
\end{equation}
This is sufficient to conclude.
\end{proof}

Let us introduce the average of $H_i(t,\cdot)$ over a box of size $\ell =\ell_{N,k}= \lceil \frac{N}{k^2}\rceil$
$$ \bar{H}_i(t,y) := \frac1{\ell}\sum_{x=(\ell- 1)y+1}^{y\ell} H_i(t,x)\;.$$
 As a consequence of Lemma \ref{smsl} we have
\begin{equation}
\bH_i(t)= \max_{x\in \lint 0,N\rint} |H_i(t,x)|\le  \max_{y\in \lint 1,  N/\ell \rint} \big| \bar{H}_i(t,y) \big|+1 \;.
\end{equation}
The result is of course obvious when $\ell=1$.
For $\ell \ge 2$, let us briefly explain why $\max H_i(t,x)\le  \max |\bar{H}_i(t,y)|+1$ (the case for $-\min$ follows by symmetry).
If $x_{\max}$ is the smallest $x$ at which the $\max$ is attained, we must distinguish two cases 
\begin{itemize}
 \item [(A)]$x_{\max}>  \ell\left( \lfloor N/\ell \rfloor-1\right)+1\ge N-2\ell$,
in which case \eqref{zladiff} applied for $x_{\max}$  and $N$ implies that $H_i(t,x_{\max})\le 1$, 
\item[(B)] $x_{\max}\le \ell\left( \lfloor N/\ell \rfloor-1\right)+1$ in which case one can compare 
$H_i(t,x_{\max})$ with $\bar{H}_i(t,y)$ for the smallest $y$ such that $x_{\max}\le y(\ell- 1)+1$ using \eqref{zladiff} again.
\end{itemize}
Then, Proposition \ref{Prop:BoundMax} is a direct consequence of the following bound on the averages of $H_i$.
\begin{lemma}
For any $a >0$, there exists $a' >0$ such that for all $N$ large enough
$$\sup_{t\ge t_0} \max_{i\in\{1,2\}} \P\Big( \max_{y\in \lint 1,  N/\ell \rint} |\bar{H}_i(t,y)|  >  k^{\frac12 + a}\Big) \le e^{- k^{a'}}\;.$$
\end{lemma}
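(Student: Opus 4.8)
The plan is to freeze $t\ge t_0$ and $i\in\{1,2\}$ (write $\zeta_1=\wedge$, $\zeta_2=\vee$), to control $|\bar H_i(t,y)|$ for each block separately, and to finish by a union bound over the at most $N/\ell\le k^2$ blocks and over $i$. The starting point is that $\bar H_i(t,\cdot)$ lives in the spectral basis of $\cL_{N,k}$: for fixed $t$ the function $x\mapsto (\gl^{\frac12 h^{\zeta_i}_t(x)}-a_{N,k}(x))/(\gl-1)$ vanishes at $x=0$ and $x=N$, hence equals $\sum_{j=1}^{N-1}\frac2N f^{(j)}_{N,k}(h^{\zeta_i}_t)\sin(\frac{jx\pi}N)$, so averaging over the block $B_y$ of $\ell$ consecutive sites gives $\bar H_i(t,y)=\sum_{j=1}^{N-1} w_{j,y}\, f^{(j)}_{N,k}(h^{\zeta_i}_t)$, with $w_{j,y}:=\gl^{\frac{N-k}2}\frac2{N\ell}\sum_{x\in B_y}\sin(\frac{jx\pi}N)$. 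I will use two elementary facts: $|w_{j,y}|\le \gl^{\frac{N-k}2}\frac2{N\ell}\min(\ell, N/j)$, and $\sum_j w_{j,y}\sin(\frac{jz\pi}N)=\gl^{\frac{N-k}2}\ell^{-1}\ind_{\{z\in B_y\}}$.

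First I would bound the mean. Since each $f^{(j)}_{N,k}(h^{\zeta_i}_\cdot)e^{\gamma_j \cdot}$ is a martingale (the identity behind \eqref{eq:contract1}), $\E[\bar H_i(t,y)]=\sum_j w_{j,y}e^{-\gamma_j t}f^{(j)}_{N,k}(\zeta_i)$, and $|f^{(j)}_{N,k}(\zeta_i)|\le \sum_x(\gl^{\frac12\wedge(x)}-\gl^{\frac12\vee(x)})/(\gl-1)\le Nk\gl^{k/2}$ as in Lemma~\ref{lem:firstmom}. Because $t\ge t_0$ and $e^{-\gamma_1 t_0}=k^{-1/2}$, one has $e^{-\gamma_j t}\le k^{-1/2}e^{-(\gamma_j-\gamma_1)t_0}$; using $\gamma_j-\gamma_1=4\sqrt{p_Nq_N}(\sin^2\frac{j\pi}{2N}-\sin^2\frac\pi{2N})\gtrsim j^2/N^2$ (for $j\ge2$), $t_0=\log k/(2\gap_N)$, and $\gap_N\lesssim b_N^2+N^{-2}$, a Gaussian sum gives $\sum_j e^{-(\gamma_j-\gamma_1)t_0}\lesssim 1+N\sqrt{\gap_N/\log k}\lesssim 1+\sqrt{((Nb_N)^2+1)/\log k}=k^{o(1)}$ by the small-bias hypothesis $Nb_N=o(\log k)$. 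Combining with $\sum_j\min(\ell,N/j)e^{-(\gamma_j-\gamma_1)t_0}\le 2\ell\,k^{o(1)}$, one obtains $|\E[\bar H_i(t,y)]|\le Nk\gl^{k/2}\cdot\gl^{\frac{N-k}2}\frac2{N\ell}k^{-1/2}\cdot 2\ell\,k^{o(1)}=k^{1/2+o(1)}\le\tfrac12 k^{1/2+a}$ for $N$ large (recall $\gl^{N/2}=e^{(1+o(1))Nb_N/2}=k^{o(1)}$).

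Next, the fluctuation. With $t$ fixed, set $G_s(z):=\sum_j w_{j,y}e^{-\gamma_j(t-s)}\sin(\frac{jz\pi}N)=\gl^{\frac{N-k}2}\ell^{-1}e^{-\varrho(t-s)}\sum_{x\in B_y}q_{t-s}(z,x)$ for $s\in[0,t]$, where $q_\tau$ is the discrete Dirichlet heat kernel of $\sqrt{p_Nq_N}\Delta$ on $\lint1,N-1\rint$; this is a deterministic function with $\|G_s\|_\infty\le \gl^{\frac{N-k}2}\ell^{-1}\le \gl^{N/2}\ell^{-1}\le k^{o(1)}$. The Doob martingale $\tilde M_s:=\E[\bar H_i(t,y)\mid\cF_s]=\sum_j w_{j,y}e^{-\gamma_j(t-s)}f^{(j)}_{N,k}(h^{\zeta_i}_s)$ satisfies $\tilde M_t=\bar H_i(t,y)$, $\tilde M_0=\E[\bar H_i(t,y)]$; its jumps (at corner flips of $h^{\zeta_i}$) have absolute value $\le\gl^{k/2}\|G_s\|_\infty\le k^{o(1)}$, and — exactly as in Lemma~\ref{Lemma:BoundVar}, using that there are at most $2k$ corners and $\gl^{h(z)}\le\gl^k$ — its predictable bracket obeys the \emph{deterministic} bound $\langle\tilde M\rangle_t\le 2k\gl^k\int_0^t\|G_s\|_\infty^2\,ds$. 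Now the standard heat-kernel estimates $\sum_{x\in B_y}q_\tau(z,x)\le\min(1,C\ell\tau^{-1/2})$ for $\tau\le N^2$ and $\lesssim (\ell/N)e^{-\mu_1(\tau-N^2)}$ for $\tau\ge N^2$ (with $\mu_1=4\sqrt{p_Nq_N}\sin^2\frac\pi{2N}$) give $\int_0^t\|G_s\|_\infty^2\,ds\lesssim \gl^{N-k}\bigl(1+\log(N^2/\ell^2)+(N^2\mu_1)^{-1}\bigr)\lesssim\gl^{N-k}\log k$, because the choice $\ell=\lceil N/k^2\rceil$ makes $N^2/\ell^2\le k^4$ while $N^2\mu_1\gtrsim1$. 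Hence $\langle\tilde M\rangle_t\le 2k\gl^N\log k=k^{1+o(1)}$, uniformly in $t$.

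It then remains to apply a Freedman-type martingale concentration inequality (of the kind used for Proposition~\ref{prop:solskjaer}), with deterministic jump bound $b=k^{o(1)}$ and bracket bound $V=k^{1+o(1)}$: $\P\bigl(|\bar H_i(t,y)-\E[\bar H_i(t,y)]|>\tfrac12 k^{\frac12+a}\bigr)\le 2\exp\bigl(-k^{1+2a}/(C k^{1+o(1)})\bigr)\le e^{-k^{2a-o(1)}}$, which together with the mean bound yields $\P(|\bar H_i(t,y)|>k^{\frac12+a})\le e^{-k^{2a-o(1)}}$; a union bound over the $\le k^2$ blocks and over $i\in\{1,2\}$ gives $\sup_{t\ge t_0}\max_i\P(\max_y|\bar H_i(t,y)|>k^{\frac12+a})\le 2k^2 e^{-k^{2a-o(1)}}\le e^{-k^{a'}}$ for any fixed $a'<2a$ and $N$ large. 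The main obstacle is precisely the deterministic, $t$-uniform bound $\langle\tilde M\rangle_t\le k^{1+o(1)}$: it requires combining the on-diagonal decay $q_\tau\lesssim\tau^{-1/2}$ of the discrete Dirichlet kernel on scales $\lesssim N^2$ with its exponential decay beyond scale $N^2$, against the averaging over a block of the precise size $\ell\asymp N/k^2$, and it is crucial that the resulting estimate carries only $\log k$ and no factor of $N$ or $\log N$ — otherwise the concentration would break down when $N$ is superpolynomial in $k$. The subsidiary point is to route the whole argument through the eigenfunctions $f^{(j)}_{N,k}$ and the martingale $\tilde M$, so that an exponential tail, rather than a mere Chebyshev bound, is available.
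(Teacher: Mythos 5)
Your argument is correct and is essentially the paper's proof: the same eigenfunction decomposition of $\bar H_i(t,y)$ over the $f^{(j)}_{N,k}$, the same mean bound via the Gaussian decay of $e^{-(\gamma_j-\gamma_1)t_0}$ in $j$ (using $N^2\gap_N\ll(\log k)^2$ from \eqref{smallbias}), the same Doob martingale $s\mapsto\E[\bar H_i(t,y)\mid\cF_s]$, and the same Bernstein/Freedman-type concentration followed by a union bound over the $\lesssim k^2$ blocks. The one technical variation is in controlling the integrated square jump amplitude: the paper carries out a direct double Fourier sum $\sum_{i,j}|\Phi_{y,i}\Phi_{y,j}|/(\gamma_i+\gamma_j)$, whereas you pass to the Dirichlet heat-kernel representation of $G_s$ and use on-diagonal decay $q_\tau\lesssim\tau^{-1/2}$ plus exponential decay past scale $N^2$ --- a different evaluation of the same quantity, yielding the same $\lambda^{2N}\log k$, rather than a different proof strategy.
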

\begin{proof}
We treat in details the bound of $\bar{H}_1$, since the bound of $\bar{H}_2$ follows from the same arguments. 
Using a decomposition of $\gl^{\frac{k-N}{2}} H_1(t,\cdot)$, which is a solution of \eqref{Eq:V}, on the basis of eigenfunction of the Laplacian formed by $\sin(i \pi \cdot)$, $i=1,\ldots,N-1$, we obtain the following expression for the mean
\begin{equation*}
\bbE[H_1(t,x)]= \gl^{\frac{N-k}{2}} \sum_{i=1}^{N-1} \frac2{N} e^{-\gamma_i t} f^{(i)}_{N,k}(\wedge) \sin\big( \frac{i \pi x}{N}\big),
\end{equation*}
and the fluctuation around it
\begin{multline}\label{2dterm}
  H_1(t,x)- \bbE[H_1(t,x)]\\
 =\gl^{\frac{N-k}{2}} \sum_{i=1}^{N-1} \frac{2}  {N} \big(f^{(i)}_{N,k}(h^\wedge_t)-e^{-\gamma_i t} f^{(i)}_{N,k}(\wedge)\big) \sin\big(\frac{i \pi  x}{N}\big).
\end{multline}
We bound separately the contributions to $\bar{H}_1$ coming from these  two terms.
We start with the mean. Since $\gl^{\frac{1}{2}\wedge(y)} \ge a_{N,k}(y) \ge\gl^{\frac{1}{2}  \vee(y)}$ for every $y\in\lint 0,N\rint$, we have (recall \eqref{linex})
\begin{align*}
|f^{(i)}_{N,k}(\wedge)| \le \sum_{y=1}^{N-1} \frac{\gl^{\frac12 \wedge(y)} -  a_{N,k}(y)}{\gl -1} 
\le \sum_{y=1}^{N-1} \lambda^{\frac12 \wedge(y)}\,\frac{\wedge(y)-\vee(y)}{2}
\le \lambda^{\frac{k}{2}} k N\;.
\end{align*}
Since $\lambda^{k/2}$ is negligible compared to any power of $k$, we deduce that for all $a>0$ and all $t\ge t_{0}$ we have for all $N$ large enough
$$ \sup_{x\in \lint 0,N\rint} \left|\sum_{i=1}^{N-1} \frac2{N} e^{-\gamma_i t} f^{(i)}_{N,k}(\wedge) \sin\big(i \pi \frac{x}{N}\big)\right| \le k^{(1+a)/2} 
\sum_{i=1}^{N-1} e^{(\gamma_1-\gamma_i) t_0}\;.$$
Notice that there exists $c>0$ such that for all $i\ge 2$ and all $N$ large enough
\begin{equation}\label{Eq:BoundGamma} \gamma_i - \gamma_1 \ge  c \frac{i^2}{N^2}\;.
\end{equation}
In addition, we have $N^2 \gap_N \ll (\log k)^2$ by the small bias assumption \eqref{smallbias}, so that we get for $i\ge 2$
$$ e^{(\gamma_1-\gamma_i) t_{0}} \le e^{- c \frac{i^2}{N^2} \frac{\log k}{2 \gap_N}} \le e^{-c' \frac{i^2}{\log k}} \le e^{-c' \frac{i}{\log k}}\;,$$
so that for all $N$ large enough we have
$$ \sum_{i=2}^{N-1} e^{(\gamma_1-\gamma_i) t_{0}} \le \sum_{i=2}^{N-1} e^{-c' \frac{i}{\log k}} \le C \log k \;.$$
Recall that $\ell = \lceil N/k^2 \rceil $. Putting everything together and using assumption \eqref{smallbias}, we get that given $a>0$ for $N$ sufficiently large and all values of $y$ we have
\begin{equation}\label{stimean}
\bbE[\bar H_1(t,y)]  =
\frac{ 2\gl^{\frac{N-k}{2}} }{N\ell} \Big|\sum_{x=\ell(y-1)+1}^{\ell y}\sum_{i=1}^{N-1}  e^{-\gamma_i t} f^{(i)}_{N,k}(\wedge) 
\sin\big(i \pi \frac{x}{N}\big)\Big| 
\le  \frac{1}{2} k^{\frac12+a} \;.
\end{equation}

\medskip

We turn to the contribution coming from the second term \eqref{2dterm}. To that end, we rewrite it 
 in the form
\begin{equation}
 \bar H_1(s,y)-\bbE\left[  \bar H_1(s,y) \right] = \gl^{\frac{N-k}{2}}\sum_{i=1}^{N-1} \big(f^{(i)}_{N,k}(h_t^\wedge) - e^{-\gamma_i s}f^{(i)}_{N,k}(\wedge)\big) \Phi_{y,i}\;.
 \end{equation}
 where (the second expression being obtained by summation by part)
\begin{multline}
\Phi_{y,i} = \frac2{N \ell } \sum_{x=y(\ell-1)+1}^{y\ell} \sin\big( \frac{i\pi x}{N}\big)\\
=  \frac1 {N \ell \sin \left(\frac{i\pi }{2N}\right)} \left[\cos\left( \frac{[2y(\ell-1)+1]i\pi }{2N}\right)- 
\cos\left( \frac{[2y\ell+1]i\pi }{2N}\right)    \right]\;.
\end{multline}
Note that for all $N\ge1$, $y$ and $i$ we have
$$ \big|\Phi_{y,i}\big| \le 2\min\left( \frac 1 N, \frac{1}{ i \ell }\right) \;.$$
Now let us fix $t$ and $y$ and introduce the martingale
$$ N_s^{(t,y)} = \gl^{\frac{N-k}{2}} \sum_{i=1}^{N-1} e^{\gamma_i (s-t)} \big(f^{(i)}_{N,k}(h_s^\wedge) - e^{-\gamma_i s}f^{(i)}_{N,k}(\wedge)\big) \Phi_{y,i}\;,\quad s\in [0,t]\;.$$
which satisfies 
$$N_0^{(t,y)}=0 \text{ and } N^{(t,y)}_t=\bar H_1(t,y)-\bbE\left[  \bar H_1(t,y) \right].$$
We wish to apply Lemma \ref{lem:expobd} to the martingale $N^{(t,y)}$: the maximal jump rate of this process is bounded by $k$ 
and the maximal amplitude of the jump (cf.\ the notations introduced in Appendix \ref{lapC}) 
satisfies 
$$\forall s \in [0,t], \quad S(s)\le \gl^N \sum_{i=1}^{N-1} e^{\gamma_i(s-t)} \big|\Phi_{y,i}\big|\;.$$
Using that as a consequence of \eqref{Eq:BoundGamma} we have $\gamma_i\ge c i^2 N^{-2}$ for all $i\ge 1$ for some $c>0$, 
we deduce that there exist some constants $C, C'>0$ such that
\begin{align*}
&\int_0^t S(s)^2 ds\le  \gl^{2N} \sum_{i,j=1}^{N-1} \frac{1}{\gamma_i+\gamma_j}|\Phi_{y,i} \Phi_{y,j}|\\
&\le C \lambda^{2N} \Big(\sum_{1\le i \le j \le \frac{N}{\ell}} \frac1{i^2+j^2} + \sum_{1\le i \le \frac{N}{\ell} < j} \frac{N}{j\ell} \frac1{i^2+j^2} + \sum_{\frac{N}{\ell} < i \le j} \frac{N^2}{i j\ell^2} \frac1{i^2+j^2}\Big)\\
&\le C' \lambda^{2N} \log k\;,
\end{align*}
Consequently, setting $\gamma= k^{-\frac{1}{2}-2a}$ and using the fact that from \eqref{smallbias} we have 
$\gamma \,C'\lambda^{2N} \log k < 1$ for $N$ sufficiently large,
we apply \eqref{Eq:ExpoBdMgale} and obtain 
\begin{multline}
\P(N_t^{(t,y)} > \frac 1 2 k^{\frac {1}{2} +a}) 
\le \E[e^{\gamma N_t^{(t,y)}}]e^{-\frac 1 2 \gamma k^{\frac{1}{2}+a}} \\
 \le e^{C' e \gamma^2 k \gl^{2N} \log k -\frac 1 2 \gamma k^{\frac{1}{2}+a}}\le e^{-\frac{1}{4}k^{-a}} \;.
\end{multline}
A similar same computation for $N_t^{(t,y)} <- \frac 1 2 k^{\frac {1}{2} +a}$ and a union bound yield 
\begin{multline}
  \P(\sup_{y \in \lint 1, N/\ell \rint } |\bar H_i(t,y)-\bbE[ H_i(t,y)]| >  \frac 1 2 k^{\frac{1}{2}+a})\\
  =\P(\sup_{y \in \lint 1, N/\ell \rint } |N_t^{(t,y)}| > \frac 1 2  k^{\frac{1}{2}+a}) 
  \le C k^2 e^{-\frac{1}{4}k^{-a}},
  \end{multline}
which combined with \eqref{stimean} allows to conclude.
\end{proof}

\appendix

\section{A monotone grand coupling}\label{Appendix:Coupling}

The construction below is similar to the one detailed in \cite[Section 8.1]{Lac16} in the symmetric case. We consider a collection of independent Poisson clock processes  $\cP^{(i,\ell)}$ and $\cQ^{(i,\ell)}$ with rate $p$ and $q$ respectively where $i\in\lint 1 ,N\rint$ and $\ell \in \lint -N,\ldots,N\rint$: For each $(i,\ell)$, $\cP^{(i,\ell)}$ resp.\ $\cQ^{(i,\ell)}$ is a 
 random increasing sequence of positive real numbers (or equivalently a random locally finite subset of $(0,\infty)$) whose first term and  increments are independent geometric variables of mean $p^{-1}$ resp.\ $q^{-1}$.
 
 For every $k$ and every $\zeta\in \Omega_{N,k}$, we construct the process $(h_t^{\zeta})_{t\ge 0}$ as follows: The process is c\`ad-l\`ag and may  only jump 
  at the times specified by the clock process $\cP$ and $\cQ$.
 We enumerate these Poisson times in increasing order and if
 $t\in \cP^{(i,\ell)}$ and if $h^\zeta_{t-}$ displays a local maximum at $i$ and height $\ell$, that is if
$$h^\zeta_{t-}(i) = \ell = h^\zeta_{t-}(i-1)+1 = h^\zeta_{t-}(i+1)+1\;,$$
then we flip it downwards to a local minimum by setting , $h^\zeta_t(i) := h^\zeta_{t-}(i) - 2$, and $h^{\zeta}_t(j)=h^{\zeta}_{t-}(j)$ for $j\ne i$. 
A similar transition occurs if $Q(i,\ell)$ rings and if $h^\zeta_{t-}$ displays local minimum at $i$ and height $\ell$.\\

It is simple to check that under this construction, $h^\zeta$ indeed evolves according to the right dynamics, and that monotonicity is preserved.

\section{Diffusion bounds for continuous-time supermartingales}\label{sec:diffu}

In this section, we assume that $(M_t)_{t\ge 0}$ is a pure-jump supermartingale with bounded jump rate and jump amplitude. This implies in particular that, $M_t$ is square integrable for all $t>0$. With some abuse of notation, we use the notation $\langle M_\cdot \rangle_t$ for the predictable bracket associated
with the martingale $\tilde M_t=M_t-A_t$ 
where $A$ is the compensator of $M$.

\begin{proposition}\label{prop:solskjaer}
Let $(M_t)_{t\ge 0}$ be as above 
\begin{itemize}
\item[(i)] Set $\tau= \inf\{ t\ge 0 \ : \ M_t=0\}$.
Assume that $M_t$ is non-negative and that, until the absorption time $\tau$, its jump amplitude and jump rate are bounded below by $1$. Then we have for any $a\ge 1$ and all $u>0$
\begin{equation}\label{lopes}
 \bbP[ \tau \ge a^2 u \ | \ M_0\le a ]\le 4  u^{-1/2}.
\end{equation}

\item [(ii)] Given $a \in\bbR$ and $b\le a$, we set  $\tau_{b}:=\inf\{ t\ge 0 \ : \ M_t\le b\}$.
If the amplitude of the jumps of $(M_t)_{t\ge 0}$ is bounded above by $a-b$, 
we have for any $u\ge 0$
\begin{equation}
 \bbP[  \langle M_\cdot \rangle_{\tau_b}\ge (a-b)^2 u  \ | \ M_0\le a ]\le 8  u^{-1/2}.
\end{equation}
\end{itemize}
\end{proposition}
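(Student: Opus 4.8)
The plan is to reduce both statements to a single elementary fact about non-negative supermartingales: if $X$ is a non-negative supermartingale started from $x_0$, then $\P[\sup_{t\ge 0} X_t \ge \lambda] \le x_0/\lambda$ (the maximal inequality), and more usefully here, if $X$ is absorbed at $0$ and has a predictable bracket growing at rate at least $1$, then $X_t^2$ cannot stay large for long because its drift is too negative. Concretely, for part (ii), consider the stopped process $M_{t\wedge \tau_b} - b \ge 0$; we may as well translate and assume $b=0$, $a\ge 0$, so $M$ is a non-negative supermartingale with jumps bounded by $a$, started from $M_0 \le a$, and $\tau_0$ is the absorption time at $0$. Let $N_t := M_t^2 + \langle M_\cdot\rangle_{t\wedge\tau_0}$ up to a correction: the point is that $M_t^2$ is a supermartingale plus a term whose predictable part is $\langle M_\cdot\rangle_t$ minus (twice) something non-negative coming from the supermartingale compensator. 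More carefully, writing $M_t = M_0 + \tilde M_t + A_t$ with $A$ non-increasing (compensator) and $\tilde M$ a martingale, Itô/Dynkin for jump processes gives $\E[M_{t\wedge\tau_0}^2] = \E[M_0^2] + \E\big[\int_0^{t\wedge\tau_0} 2 M_{s-}\, dA_s\big] + \E[\langle \tilde M\rangle_{t\wedge\tau_0}]$, and since $M_{s-}\ge 0$ and $dA_s \le 0$, this yields $\E[\langle M_\cdot\rangle_{t\wedge\tau_0}] \le \E[M_0^2] \le a^2$ for every $t$; letting $t\to\infty$ and using monotone convergence, $\E[\langle M_\cdot\rangle_{\tau_0}] \le a^2$ conditionally on $M_0 \le a$ (one must be slightly careful to condition properly, but the bound $\E[\langle M_\cdot\rangle_{\tau_0}\mid M_0 \le a]\le a^2$ follows since the same computation runs on the conditioned process). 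Markov's inequality then gives $\P[\langle M_\cdot\rangle_{\tau_0} \ge a^2 u \mid M_0\le a] \le u^{-1}$, which is in fact stronger than the claimed $8u^{-1/2}$ for $u\ge 1$; for $u<1$ the bound $8u^{-1/2} > 8 \ge 1$ is trivially true. So part (ii) should actually come out cleanly, and the constant $8$ is not tight.

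For part (i), the extra input is that both the jump \emph{rate} and the jump \emph{amplitude} are bounded below by $1$ until $\tau$, which forces $\partial_t \langle M_\cdot\rangle_t \ge 1$ on $\{t < \tau\}$ (each jump contributes at least $1^2=1$ to the bracket and jumps arrive at rate at least $1$), hence $\langle M_\cdot\rangle_{\tau} \ge \tau$ almost surely. However, part (i) does not assume an upper bound on the jump amplitude, so we cannot directly invoke (ii). The fix is to stop $M$ the first time it exceeds $2a$: on the event $\{M_0 \le a\}$, either $M$ is absorbed at $0$ before reaching level $2a$, or it reaches $2a$ first. The maximal inequality gives $\P[\sup_t M_t \ge 2a \mid M_0\le a] \le 1/2$. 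On the complementary event, $M$ stays in $[0,2a)$ up to $\tau$, so its jumps that actually occur before $\tau$ have amplitude at most $2a$ (a jump from inside $[0,2a)$ that would overshoot — but we've conditioned it doesn't, except possibly the very jump that exits, which we've excluded). Then the computation of part (ii) applies with $a$ replaced by $2a$ to give $\E[\langle M_\cdot\rangle_\tau \,;\, \sup M < 2a \mid M_0\le a] \le (2a)^2 = 4a^2$, hence $\P[\langle M_\cdot\rangle_\tau \ge 4a^2 u, \sup M < 2a \mid M_0\le a] \le u^{-1}$, and combining with $\langle M_\cdot\rangle_\tau \ge \tau$ and the $1/2$ from the maximal inequality:
\begin{equation}
\P[\tau \ge 4a^2 u \mid M_0\le a] \le \tfrac12 + u^{-1}.
\end{equation}
Replacing $4a^2 u$ by $a^2 u$ (i.e. $u \mapsto u/4$) gives $\P[\tau \ge a^2 u\mid M_0\le a]\le \tfrac12 + 4u^{-1}$; this is not yet the stated $4u^{-1/2}$, so one iterates: apply the bound at times $a^2 u, 2a^2 u, 4a^2 u,\dots$ using the Markov property at the stopping times and the fact that $M$ started afresh from a value $\le$ its value, which is $\le 2a$ with probability $\ge 1/2$; a geometric-type summation of the ``halving'' of survival probability turns the $\tfrac12$ into an exponentially small term and leaves a tail of order $u^{-1}$, still beating $u^{-1/2}$. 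Alternatively, and more simply, one notes $\P[\tau \ge s \mid M_0 \le a] = \P[\langle M_\cdot\rangle_{\tau} \ge s, \tau \ge s \mid M_0\le a]$ is bounded using $\langle M_\cdot\rangle_\tau \ge \tau$ together with a one-step optional-stopping bound $\E[\sqrt{\langle M_\cdot\rangle_\tau}] \le C a$ (which follows from $\E[\langle M_\cdot\rangle_\tau] \le C a^2$ and Jensen the other way is not valid — so instead use that $\sqrt{M_t^2 + \langle M_\cdot\rangle_t}$ is close to a supermartingale), giving $\P[\langle M_\cdot\rangle_\tau \ge a^2 u] \le Ca/(a\sqrt u) = C/\sqrt u$ by Markov on the square root.

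The main obstacle is the \textbf{square-root} in part (i): getting $u^{-1/2}$ rather than $u^{-1}$ requires controlling $\E[\sqrt{\langle M_\cdot\rangle_\tau}]$ rather than $\E[\langle M_\cdot\rangle_\tau]$, and Jensen goes the wrong way. The clean route is to show that $Y_t := \sqrt{c + \langle M_\cdot\rangle_{t\wedge\tau}}$ (or $\sqrt{M_t^2 + \langle M_\cdot\rangle_{t\wedge\tau}}$ with the correction terms from the supermartingale compensator, which only help) is itself a supermartingale-like object: $\partial_t \E[Y_t^2] \le 0$ as shown above, and a concavity argument ($x\mapsto\sqrt x$ concave) together with the jump structure shows $\E[Y_t]$ does not grow, whence $\E[\sqrt{\langle M_\cdot\rangle_\tau}] \le \E[Y_0] \le \sqrt{a^2+c}$; optimizing $c$ and applying Markov gives the $u^{-1/2}$ tail with an explicit constant, which a careful accounting pins at $4$ (resp. $8$ in part (ii) after translating by $b$ and allowing the slightly worse constant from the stopping argument). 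I would carry out: (1) the Dynkin identity for $M^2$ and the resulting $\E[\langle M_\cdot\rangle] \le a^2$ bound; (2) the concavity/supermartingale argument for $\sqrt{\,\cdot\,}$ to upgrade to a first-moment bound on the square root; (3) Markov's inequality for the square root to get the $u^{-1/2}$ tail; (4) for part (i), the reduction $\langle M_\cdot\rangle_\tau \ge \tau$ and the maximal-inequality stopping trick to handle the absent upper bound on jump amplitudes. Step (2) is the crux.
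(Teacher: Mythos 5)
The proposal hinges on the claim that for the stopped process $\E[\langle M_\cdot\rangle_{\tau_0}]\le a^2$ (after translating $b$ to $0$), and this claim is false. Take $M$ to be a rate-$1$ continuous-time simple random walk on $\bbZ$ started at $M_0=a$: it is a pure-jump martingale (hence supermartingale) with bounded jump rate and with jumps of size $1\le a-b$, so it falls squarely under the hypotheses of part (ii). Its bracket grows linearly, $\langle M_\cdot\rangle_t = ct$, and the hitting time $\tau_0$ of $\{M\le 0\}$ has \emph{infinite} mean, so $\E[\langle M_\cdot\rangle_{\tau_0}]=\infty$, not $\le a^2$. The error is in the direction of the inequality extracted from the It\^o identity: from
\begin{equation*}
\E[M_{t\wedge\tau_0}^2]
= \E[M_0^2] + \E\Big[\int_0^{t\wedge\tau_0} 2M_{s-}\,\dd A_s\Big] + \E[\langle M_\cdot\rangle_{t\wedge\tau_0}],
\end{equation*}
solving for the bracket gives
$\E[\langle M_\cdot\rangle_{t\wedge\tau_0}]
= \E[M_{t\wedge\tau_0}^2]-\E[M_0^2]-\E[\int 2M_{s-}\,\dd A_s]$,
and while the last term is indeed $\ge 0$, the term $\E[M_{t\wedge\tau_0}^2]$ is \emph{not} dominated by $\E[M_0^2]$: a non-negative supermartingale stopped at its hitting time of $0$ is not $L^2$-bounded (for the random walk it tends to infinity). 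You cannot conclude $\E[\langle M_\cdot\rangle_{t\wedge\tau_0}]\le\E[M_0^2]$ from sign considerations; in fact the identity points the other way.

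This matters because the random walk example also shows that $u^{-1/2}$ is the \emph{sharp} rate: gambler's ruin gives $\P[\tau_0 > a^2 u]\asymp u^{-1/2}$, so no first-moment Markov estimate for $\langle M_\cdot\rangle_{\tau_0}$ can ever produce the proposition. You do notice at the end that ``Jensen goes the wrong way'' for part (i), but the same obstruction invalidates your argument for part (ii), which you believed to ``come out cleanly.'' Your proposed fix (step (2)) also has a sign problem: with $Y_t=\sqrt{c+\langle M_\cdot\rangle_{t\wedge\tau}}$ one has $\E[Y_t^2]=c+\E[\langle M_\cdot\rangle_{t\wedge\tau}]$, which is \emph{nondecreasing} in $t$, so $\partial_t\E[Y_t^2]\ge 0$, not $\le 0$. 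You would need to feed in $M_t$ itself to create a cancellation, which is precisely what the paper does in a cleaner way.

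The paper's actual route sidesteps all of this by building an exponential functional of $(M_t,\langle M_\cdot\rangle_t)$ that is a genuine submartingale: it proves (Lemma~\ref{lem:supersub}) that $\exp(-\gl M_t-\tfrac{\gl^2 t}{4})$ is a submartingale under the hypotheses of (i), and $\exp(-\gl M_t-\tfrac{\gl^2}{4}\langle M_\cdot\rangle_t)$ under those of (ii), for suitably small $\gl$. Applying optional stopping at $\tau$ (resp.\ $\tau_b$), using $M_\tau=0$ (resp.\ $M_{\tau_b}\ge -(a-b)$), and optimizing $\gl\approx 1/(a\sqrt u)$, produces the $u^{-1/2}$ tail directly with explicit constants $4$ and $8$. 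The key point you are missing is that the square-root rate forces a nonlinear (exponential) Lyapunov functional; neither a second-moment computation nor a naive concavity argument for $\sqrt{\langle M_\cdot\rangle}$ delivers it.
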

The important building block for the proof of the above proposition is the following result.

\begin{lemma}\label{lem:supersub}
Let $(M_t)_{t\ge 0}$ be as above 
\begin{itemize}
\item[(i)] If the amplitude of the jumps of $(M_t)_{t\ge 0}$ and the jump rate are bounded below by $1$
then for all $\gl \in (0,1)$, 
$$\left(e^{-\gl M_t-\frac{\gl^2 t}{4}}\right)_{t\ge 0}$$
is a submartingale
\item[(ii)] If the amplitude of the jumps of $M_t$ is bounded above by $a$ then for any $\gl\in (0,a^{-1})$ we have
$$\left( \exp\left(-\gl M_t- \frac{\gl^2}{4} \langle M_\cdot \rangle_t\right)\right)_{t\ge 0}$$
is a submartingale.
\end{itemize}
\end{lemma}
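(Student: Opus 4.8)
The plan is to reduce both claims to two elementary scalar inequalities, via a drift computation. Since $M$ is pure-jump with bounded jump rate, let $\nu_t(\mathrm{d}j)$ denote its (predictable) jump intensity, a finite measure on $\bbR\setminus\{0\}$: in case (i) it is supported on $\{|j|\ge 1\}$ and has total mass $\nu_t(\bbR)\ge 1$, while in case (ii) it is supported on $\{|j|\le a\}$. Writing $M_t=\tilde M_t+A_t$ with $\tilde M$ the martingale part, the compensator $A$ is absolutely continuous with $\partial_t A_t=\int j\,\nu_t(\mathrm{d}j)$, and the supermartingale property of $M$ forces $A$ to be non-increasing, i.e.\ $\int j\,\nu_t(\mathrm{d}j)\le 0$ for a.e.\ $t$; similarly $\partial_t\langle M_\cdot\rangle_t=\int j^2\,\nu_t(\mathrm{d}j)$. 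Boundedness of the jump rate and amplitude makes the number of jumps on any interval dominated by a Poisson variable and each jump bounded, so each candidate process below is integrable and it suffices to check that its drift is nonnegative for a.e.\ $t$.

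For a continuous finite-variation process $c_t$, the drift of $e^{-\lambda M_t-c_t}$ equals
\[
e^{-\lambda M_{t-}-c_t}\Big(\int\big(e^{-\lambda j}-1\big)\,\nu_t(\mathrm{d}j)-\partial_t c_t\Big).
\]
Taking $c_t=\lambda^2 t/4$ reduces (i) to the inequality $\int(e^{-\lambda j}-1)\,\nu_t(\mathrm{d}j)\ge \lambda^2/4$, and taking $c_t=\tfrac{\lambda^2}{4}\langle M_\cdot\rangle_t$ reduces (ii) to $\int\big(e^{-\lambda j}-1-\tfrac{\lambda^2}{4}j^2\big)\,\nu_t(\mathrm{d}j)\ge 0$.

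For (ii): since $|j|\le a$ and $\lambda<a^{-1}$, the variable $u:=\lambda j$ lies in $(-1,1)$, and a two-derivative check shows $g(u):=e^{-u}+u-1-\tfrac{u^2}{4}\ge 0$ on $(-1,1)$ (one has $g(0)=g'(0)=0$, $g''>0$ on $(-1,\ln 2)$, and on $[\ln 2,1)$ the function $g'$ is concave with strictly positive values at both endpoints, hence positive throughout). Thus $e^{-\lambda j}-1-\tfrac{\lambda^2}{4}j^2\ge-\lambda j$ on $\operatorname{supp}\nu_t$, and integrating and using $\int j\,\nu_t(\mathrm{d}j)\le 0$ yields the claim. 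For (i): write $e^{-\lambda j}-1=\psi(\lambda j)-\lambda j$ with $\psi(x):=e^{-x}-1+x\ge 0$, so that $\int(e^{-\lambda j}-1)\,\nu_t(\mathrm{d}j)\ge\int\psi(\lambda j)\,\nu_t(\mathrm{d}j)$, again since $\int j\,\nu_t(\mathrm{d}j)\le 0$. On $\{j\le -1\}$ we bound $\psi(\lambda j)=e^{\lambda|j|}-1-\lambda|j|\ge\tfrac{\lambda^2 j^2}{2}\ge\tfrac{\lambda^2}{2}|j|$, so $\int\psi(\lambda j)\,\nu_t(\mathrm{d}j)\ge\tfrac{\lambda^2}{2}\int_{j<0}|j|\,\nu_t(\mathrm{d}j)$. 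It remains to show $\int_{j<0}|j|\,\nu_t(\mathrm{d}j)\ge\tfrac12$: since every jump has amplitude at least $1$ we have $\nu_t(\{j>0\})\le\int_{j>0}j\,\nu_t(\mathrm{d}j)$ and $\int_{j<0}|j|\,\nu_t(\mathrm{d}j)\ge\nu_t(\{j<0\})$, while $\int j\,\nu_t(\mathrm{d}j)\le 0$ gives $\int_{j>0}j\,\nu_t(\mathrm{d}j)\le\int_{j<0}|j|\,\nu_t(\mathrm{d}j)$ and the total mass is $\ge 1$; if $\int_{j<0}|j|\,\nu_t(\mathrm{d}j)<\tfrac12$, then successively $\nu_t(\{j>0\})<\tfrac12$, $\nu_t(\{j<0\})>\tfrac12$, and $\int_{j<0}|j|\,\nu_t(\mathrm{d}j)>\tfrac12$, a contradiction. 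Hence $\int(e^{-\lambda j}-1)\,\nu_t(\mathrm{d}j)\ge\tfrac{\lambda^2}{2}\cdot\tfrac12=\tfrac{\lambda^2}{4}$, which settles (i).

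The only slightly delicate point is the measure-theoretic bookkeeping behind the drift formula — fixing the predictable compensator of a pure-jump supermartingale with a time-dependent, possibly non-Markovian jump kernel, and verifying that the resulting exponential functionals are genuine rather than local submartingales — but this is handled by the standing assumption that the jump rate and amplitude are bounded. Beyond that, everything comes down to the two one-variable inequalities above, so I anticipate no substantial obstacle.
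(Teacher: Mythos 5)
Your proposal is correct. Both your argument and the paper's reduce the claim to showing that the exponential functional has non-negative drift, using a scalar Taylor-type inequality together with the supermartingale property (to absorb the linear term in the jump size) and the jump rate/amplitude assumptions. The technical framework differs: you work directly with the predictable jump kernel $\nu_t$ and the pointwise drift formula $\int(e^{-\lambda j}-1)\,\nu_t(\dd j)-\partial_t c_t$, whereas the paper differentiates the conditional expectation of small-time increments and invokes the single inequality $e^{-x}+x-1\ge\tfrac14\min(1,x^2)$ (their Equation \eqref{Eq:ExpoTaylor}) to handle both parts at once. For part (ii) this makes your argument noticeably cleaner: once one observes that $\lambda|j|<1$ on the support of $\nu_t$, the inequality $e^{-u}+u-1-\tfrac{u^2}{4}\ge 0$ on $(-1,1)$ combined with $\int j\,\nu_t(\dd j)\le 0$ finishes the job, whereas the paper's proof of (ii) proceeds by a fiddly term-by-term expansion involving $\tilde M$. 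For part (i) the organization is genuinely different: the paper bounds below by $\tfrac14\min(1,x^2)$ and picks up \emph{all} jumps of amplitude $\ge 1$ via the jump-rate hypothesis, while you isolate the \emph{negative} jumps and prove the structural lower bound $\int_{j<0}|j|\,\nu_t(\dd j)\ge\tfrac12$ from the balance between positive and negative drift contributions. Both yield the required $\lambda^2/4$, but your version makes the role of the downward jumps explicit — a pleasant structural by-product. The only point one might want to make more explicit when writing this up is the existence and absolute continuity of the predictable jump compensator $\nu_t(\dd j)\dd t$ for a general pure-jump supermartingale with bounded jump rate and amplitude, which you flag at the end; under the standing boundedness assumptions this is indeed routine, so the proof is sound.
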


\begin{proof}[Proof of Proposition \ref{prop:solskjaer}]

The result only needs to be proved for $u\ge 4$. Without loss of generality one can assume for the proof of both statements  that $P[ M_0\le a ]=1$ and for the second one that $b=0$.
We set $\gl=2 a^{-1} u^{-1/2}$.

\medskip

For $(i)$, a direct application of the Martingale Stopping Theorem to the submartingale of Lemma \ref{lem:supersub} yields: 
\begin{equation}
 \bbE\left[e^{-\frac{\gl^2\tau}{4}}\right]\ge  \bbE[e^{-\gl M_0 }]\ge e^{-\gl a}=e^{-2u^{-1/2}}.
\end{equation}
On the other hand one has 
\begin{equation}
 \bbE\left[e^{-\frac{\gl^2\tau}{4}}\right]\le 1-(1-e^{-\frac{\gl^2}{4} a^2u})\bbP[\tau \ge a^2 u ] \le 1-\frac{1}{2}\bbP[\tau \ge a^2 u].
\end{equation}
The combination of the two yields
 \begin{equation}
  \bbP[\tau \ge a^2 u]\le 2(1-e^{-2u^{-1/2}})\le 4 u^{-1/2}.
 \end{equation}

For $(ii)$, the arguments of the previous case apply almost verbatim if one replaces $\tau$ 
by  $T:=\langle M_\cdot \rangle_{\tau}$. The only thing one has to take into account is that $M_{\tau}$ is not necessarily equal to $0$,
but the assumption on the amplitude of jumps yields $M_{\tau}\ge -a$. The Martingale Stopping Theorem gives us
\begin{equation}
\bbE\left[e^{-\frac{\gl^2 T}{4}}\right]\ge e^{-\gl a } \bbE\left[e^{-\gl M_{\tau}-\frac{\gl^2 T}{4}}\right]\ge e^{-\gl a} \bbE[e^{-\gl M_0}] \ge e^{-4u^{-1/2}}\;.\end{equation}
Repeating the rest of the computation yields
\begin{equation}
\bbP[T \ge a^2 u]\le 2(1-e^{-4u^{-1/2}})\le 8 u^{-1/2}\;.
\end{equation}
\end{proof}
\begin{proof}[Proof of Lemma \ref{lem:supersub}]
Until the end of the proof, we write $\bbE_t$ for the conditional expectation given $(M_s)_{s\le t}$.\\
Case (i). Take $\gl \in (0,1)$. The submartingale identity we need to prove can be written as follows
\begin{equation*}
 \forall s, t\ge 0, \quad  \log \bbE_t\Big[e^{-\gl (M_{t+s}-M_t)}\Big]\ge s\frac{\gl^2}{4},
\end{equation*}
Taking derivative, we deduce that it suffices to prove that for all $t,s\ge 0$ we have
\begin{equation}\label{toprov}
 \lim_{h\downarrow 0} \frac1{h}\bbE_t\Big[e^{-\gl( M_{t+s+h}-M_t)}-e^{-\gl (M_{t+s}-M_t)}\Big] \ge \frac{\gl^2}{4}  \bbE_t\Big[e^{-\gl (M_{t+s}-M_t)}\Big]\;.
 \end{equation}
Notice that for all $x\in \bbR$ 
\begin{equation}\label{Eq:ExpoTaylor}
e^{-x}+x-1\ge  \frac{\min(1,x^{2})}{4}\;.
\end{equation}
Thus, using the supermartingale property of $M$ we have for all $\gl\in(0,1)$
\begin{align*}
&\bbE_t\Big[e^{-\gl (M_{t+s+h}-M_t)}-e^{-\gl (M_{t+s}-M_t)}\Big]\\
&= \bbE_t\Big[e^{-\gl (M_{t+s}-M_t)}\bbE_{t+s}\big[e^{-\gl (M_{t+s+h}-M_{t+s})}-1\big]\Big]\\
&\ge \bbE_t\Big[e^{-\gl (M_{t+s}-M_t)}\bbE_{t+s}\big[e^{-\gl (M_{t+s+h}-M_{t+s})}+\gl (M_{t+s+h}-M_{t+s})-1\big]\Big]\\
&\ge \frac{\gl^2}{4}\bbE_t\Big[e^{-\gl (M_{t+s}-M_t)}\bbE_{t+s}\big[\min(1,(M_{t+s+h}-M_{t+s})^2)\big]\Big]\;.
\end{align*}
The assumption on the jump rates and the jump amplitudes yield
\begin{equation}
\liminf_{h\to 0} \frac1{h}\bbE_{t+s}\big[\min(1,(M_{t+s+h}-M_{t+s})^2)\big]\ge 1,
\end{equation}
so that Fatou's Lemma concludes the proof.

\medskip

Case (ii). We can assume without loss of generality that $a=1$. 
Here again, taking the derivative of the submartingale identity that we want to establish, it suffices to prove that for all $t,s \ge 0$ we have
\begin{align*}
\liminf_{h\downarrow 0} &\frac1{h}\bbE_t\Big[e^{-\gl M_{t+s+h} - \frac{\gl^2}{4}\langle M_\cdot\rangle_{t+s+h}} -e^{-\gl M_{t+s}- \frac{\gl^2}{4}\langle M_\cdot\rangle_{t+s}} \Big]
\ge  0\;.\end{align*}
Taking the conditional expectation w.r.t. $M_{t+s}$, we see that it suffices to prove the existence of some deterministic constant $C>0$ such that
\begin{equation}\label{Eq:subderiv} \bbE_{t+s}\Big[e^{-\gl (M_{t+s+h}-M_{t+s}) - \frac{\gl^2}{4}(\langle M_\cdot\rangle_{t+s+h}-\langle M_\cdot\rangle_{t+s})} -1 \Big] \ge -C h^2\;,
\end{equation}
for all $h$ small enough.\\
Without loss of generality, we can assume that $t+s = 0$ and $M_0=0$. Recall that $\tilde M\ge M$ is the martingale which is obtained by subtracting the (negative) compensator. Thus
\begin{align*}
e^{-\gl M_{h} - \frac{\gl^2}{4} \langle M_\cdot\rangle_{h}} -1 &\ge e^{-\gl \tilde{M}_{h} - \frac{\gl^2}{4} \langle M_\cdot\rangle_{h}} -1\\
&\ge \big(1-\gl \tilde M_{h}+ \frac{1}{4}\min(1, \gl^{2} \tilde M_{h}^2)\big) \big(1- \frac{\gl^2}{4} \langle M_\cdot\rangle_{h}\big)-1 \\
&\ge -\gl \tilde M_{h}+  \frac{\gl^2}{4}\big( \tilde M_{h}^2- \langle M_\cdot\rangle_{h}\big)
- \frac{\gl^2}{4}\big(\tilde M_{h}^2-\gl^{-2}\big)_+ \\
&\quad+ \frac{\gl^2}{4}\langle M_\cdot\rangle_{h} \big(\gl \tilde M_{h} - \frac{1}{4}\min(1, \gl^2 \tilde M_{h}^2) \big)\;,
\end{align*}
so that
\begin{align*}
\bbE\Big[e^{-\gl M_{h} - \frac{\gl^2}{4} \langle M_\cdot\rangle_{h}} -1\Big] &\ge  \bbE\Big[- \frac{\gl^2}{4}\big(\tilde M_{h}^2-\gl^{-2}\big)_+\\
&\quad+ \frac{\gl^2}{4}\langle M_\cdot\rangle_{h} \big(\gl \tilde M_{h} - \frac{1}{4}\min(1, \gl^2 \tilde M_{h}^2) \big)\Big]\;.
\end{align*}
Take $\gl \in (0,1)$. Our assumptions on the increments and jump rates imply that for some constant $C>0$ we have
\begin{equation}\begin{split}
\bbE \left [ \big(\tilde M_{h}^2-\gl^{-2}\big)_+ \right]&\le C h^2,\\ \langle M_\cdot\rangle_{h} &\le C h,\\
 \max\left( \bbE\left[ |\tilde M_{h}| \right], \bbE \left[  \min(1, \gl^2 \tilde M_{h}^2)  \right]\right)&\le Ch,
 \end{split}
 \end{equation} 
 (the compensator being of order $h$ the estimates for $\tilde M$ can be deduced from that for $M$), which allows to conclude that \eqref{Eq:subderiv} holds.

\end{proof}

\section{Exponential moments of continuous-time martingales}\label{lapC}

Let $(M_t)_{t\ge 0}$ be a martingale defined as a function of a continuous time Markov chain on a finite state space
$$M_t=f(t,X_t)\;,$$
where $f$ is differentiable in time.
We let $B$ denote the maximal jump rate for $X$ and let $S(t)$ denote the maximal amplitude for a jump of $M$ at time $t$:
$$S(t):=\max_{\xi\sim \xi'} |f(t,\xi)-f(t,\xi')|\;.$$

\begin{lemma}\label{lem:expobd}
For any $\gl > 0$ we have 
\begin{equation}
\bbE \left[ e^{\gl M_t} \right] \le \exp \left( B\int^t_0 \left[e^{\gl S(s)}-\gl S(s)-1\right]\dd s \right).
\end{equation}
In particular if $\gl S(t)\le 1$ for all $t\ge 0$ then we have 
\begin{equation}\label{Eq:ExpoBdMgale}
\bbE \left[ e^{\gl M_t} \right] \le \exp \left( B e \gl^2 \int^t_0 S^2(s) \dd s \right).
\end{equation}
\end{lemma}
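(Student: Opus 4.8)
The plan is to reduce the estimate to a scalar Gr\"onwall inequality for $u(t):=\bbE[e^{\gl M_t}]$, the key point being that the martingale property of $M$ cancels the first-order term in the generator of $e^{\gl M_t}$. First I would record what martingality of $M=f(\cdot,X_\cdot)$ means at the level of generators: writing $\cL$ for the generator of $X$, so that $(\cL g)(x)=\sum_y r(x,y)\big(g(y)-g(x)\big)$ with $r(x,y)\ge 0$ the jump rates and $\sum_y r(x,y)\le B$, Dynkin's formula gives $\tfrac{\dd}{\dd t}\bbE[f(t,X_t)]=\bbE\big[(\partial_t f)(t,X_t)+(\cL f)(t,X_t)\big]$. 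Applying this to the chain started from an arbitrary state and using that $t\mapsto\bbE[f(t,X_t)]$ is constant forces $(\partial_t f)(t,x)+(\cL f)(t,x)=0$ for every $x$ in the (finite) state space and every $t$.

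Next I would apply Dynkin's formula to $g(t,x):=e^{\gl f(t,x)}$. Since $(\partial_t g)(t,x)=\gl(\partial_t f)(t,x)\,e^{\gl f(t,x)}$ and, setting $\psi(v):=e^{v}-v-1\ge 0$, one has $e^{v}-1=v+\psi(v)$, a direct computation yields
\[
(\partial_t g+\cL g)(t,x)=e^{\gl f(t,x)}\Big(\gl\big[(\partial_t f)(t,x)+(\cL f)(t,x)\big]+\sum_{y}r(x,y)\,\psi\big(\gl(f(t,y)-f(t,x))\big)\Big).
\]
By the previous paragraph the bracket vanishes, so $u'(t)=\bbE\big[e^{\gl M_t}\sum_y r(X_t,y)\,\psi(\gl(f(t,y)-f(t,X_t)))\big]$.

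Finally I would bound the right-hand side pointwise. The function $\psi$ is convex, vanishes and is minimal at $0$, hence on $[-S(t),S(t)]$ it is maximal at an endpoint, and $\psi(S(t))-\psi(-S(t))=e^{S(t)}-e^{-S(t)}-2S(t)\ge 0$; therefore $\psi\big(\gl(f(t,y)-f(t,x))\big)\le\psi(\gl S(t))$ whenever $y\sim x$, and since $\sum_y r(x,y)\le B$ and $\psi\ge 0$ we obtain $u'(t)\le B\,\psi(\gl S(t))\,u(t)=B\big[e^{\gl S(t)}-\gl S(t)-1\big]u(t)$. Integrating and using $u(0)=e^{\gl M_0}$ (equal to $1$ under the normalisation $M_0=0$ in force in the applications) gives the first inequality. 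The second one is then immediate from the Taylor estimate $\psi(v)=\tfrac{v^2}{2}e^{\xi}\le\tfrac{e}{2}v^2\le e\,v^2$ for $|v|\le 1$ (Lagrange remainder, $\xi$ between $0$ and $v$), applied with $v=\gl S(s)\le 1$.

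I do not anticipate a genuine obstacle: the only points deserving care are checking that the interchanges of $\tfrac{\dd}{\dd t}$ with $\bbE$ and the vanishing of the martingale part are legitimate, which is automatic here since $X$ takes finitely many values and has bounded jump rates, so every quantity is bounded on compact time intervals, and the elementary convexity fact $\sup_{|v|\le S}\psi(v)=\psi(S)$. Equivalently one may package the whole argument as the statement that $\exp\!\big(\gl M_t-B\int_0^t[e^{\gl S(s)}-\gl S(s)-1]\,\dd s\big)$ is a supermartingale, which is the same computation read through It\^o's formula for pure-jump processes.
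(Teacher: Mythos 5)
Your proof is correct and implements the same idea as the paper: cancel the first-order term in the expansion of $e^{\gl M_t}$ via the martingale property, leaving only the second-order remainder $\psi(v)=e^v-v-1$, which is then bounded pointwise by $\psi(\gl S(t))$ using convexity of $\psi$ and $\psi(a)\ge\psi(-a)$ for $a\ge 0$; the Gr\"onwall step and the Lagrange-remainder bound $\psi(v)\le e v^2$ for $0\le v\le 1$ are exactly what the paper uses (implicitly) for the second display. The only genuine difference is stylistic: you compute $u'(t)$ directly via the pointwise identity $\partial_t g+\cL g = g\big[\gl(\partial_t f+\cL f)+\sum_y r(\cdot,y)\psi(\gl\Delta f)\big]$ and kill the first term by $\partial_t f+\cL f\equiv 0$, whereas the paper reaches the same derivative bound through the conditional expectation $\bbE_t[e^{\gl\Delta_s M}-\gl\Delta_s M-1]$ and a stochastic-domination argument (a Poisson$(Bs)$ bound on the number of jumps in $[t,t+s]$). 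Your generator route is arguably cleaner, as it avoids the short-time domination estimate and makes the role of the martingale identity explicit; the paper's route is marginally more robust in that it does not require writing down the equation $\partial_t f+\cL f=0$ at every state. You correctly flag the two minor hygiene points that the paper leaves implicit: the interchanges of $\partial_t$ and $\bbE$ are legitimate because the state space is finite with bounded rates, and the conclusion as stated requires the normalisation $M_0=0$, which holds in every application ($N_0^{(t,y)}=0$).
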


\begin{proof}

We are going to show that for all $t\ge 0$
\begin{equation}
\partial_t \log \bbE \left[ e^{\gl M_t} \right]= \frac{\partial_t \bbE \left[ e^{\gl M_t} \right]}{\bbE \left[ e^{\gl M_t} \right]}\le B [e^{\gl S(t)}-\gl S(t)-1].
\end{equation}
To that end, it is sufficient to show that almost surely
\begin{equation}
\partial_s \bbE \left[ e^{\gl (M_{t+s}-M_t)} - 1  \ | \ \cF_t \right] |_{s=0} \le B [e^{\gl S(t)}-\gl S(t)-1].
\end{equation}
We let $\Delta_s M=M_{t+s}-M_t$ denote the martingale increment and as in the previous section write $\bbE_t$ for the conditional expectation w.r.t.\ $M_t$ . By the martingale property, we have
\begin{align*}
\bbE_t \left[ e^{\gl \Delta_s M}-1 \right]&=\bbE_t \left[ e^{\gl \Delta_s M}-\gl \Delta_s M-1 \right] \le \bbE_t \left[ e^{\gl |\Delta_s M|}-\gl |\Delta_s M|-1 \right].
\end{align*}
Note that $|\Delta_s M|$ is stochastically dominated by 
$$\left[\max_{u\in[t,t+s]} S(u)\right] \cW+ s\times \max_{u\in[t,t+s]}  \| \partial_u f(u, \cdot)\|_{\infty} $$
 where $\cW$ is a Poisson variable of parameter $B s$. As $S$ is Lipshitz we conclude that 
\begin{equation}
\bbE_t \left[ \frac{e^{\gl |\Delta_s M|}-\gl |\Delta_s M|-1}{s} \right]\le  B[e^{\gl S(t)}-\gl S(t)-1]+ c s.
\end{equation}
\end{proof}

\bibliographystyle{Martin}
\bibliography{library}

\end{document}